\pgfplotsset{compat=1.3}
\newtheorem{theorem}{Theorem}[section]
\newtheorem{conjecture}[theorem]{Conjecture}
\newtheorem{lemma}[theorem]{Lemma}
\newtheorem{proposition}[theorem]{Proposition}
\newtheorem{corollary}[theorem]{Corollary}
\theoremstyle{definition}
\newenvironment{alphabetize}{\begin{enumerate}

}{\end{enumerate}}
\DeclareMathOperator{\aut}{Aut}
\DeclareMathOperator{\cl}{Cl}
\DeclareMathOperator{\cond}{{\mathfrak f}}
\DeclareMathOperator{\End}{End}
\DeclareMathOperator{\frob}{Fr}
\DeclareMathOperator{\gal}{Gal}
\DeclareMathOperator{\gl}{GL}
\DeclareMathOperator{\Hom}{Hom}
\DeclareMathOperator{\isom}{Isom}
\DeclareMathOperator{\jac}{Jac}
\DeclareMathOperator{\mat}{Mat}
\DeclareMathOperator{\norm}{\caln}
\DeclareMathOperator{\relcond}{\cond_\rel}
\DeclareMathOperator{\tr}{tr}
\newcommand{\cala}{{\mathcal A}}
\newcommand{\cali}{{\mathcal I}}
\newcommand{\calm}{{\mathcal M}}
\newcommand{\caln}{{\mathcal N}}
\newcommand{\calo}{{\mathcal O}}
\newcommand{\cals}{{\mathcal S}}
\newcommand{\affine}{{\mathbf A}}
\newcommand{\bolde}{{\mathbf E}}
\newcommand{\aalpha}{{\pmb\alpha}}
\newcommand{\mmu}{{\pmb\mu}}
\newcommand{\complexes}{{\mathbb C}}
\newcommand{\ff}{{\mathbb F}}
\newcommand{\rat}{{\mathbb Q}}
\newcommand{\integ}{{\mathbb Z}}
\newcommand{\ida}{{\mathfrak a}}
\newcommand{\idp}{{\mathfrak p}}
\newcommand{\gsplit}{\textup{geom.~split}}
\newcommand{\rel}{\textup{rel}}
\newcommand{\ssplit}{\textup{split}}
\newcommand{\supersing}{{\textup{ss}}}
\newcommand{\abs}[1]{{\lvert#1\rvert}}
\newcommand{\bigfloor}[1]{{\left\lfloor #1 \right\rfloor}}
\newcommand{\ceil}[1]{{\lceil #1 \rceil}}
\newcommand{\floor}[1]{{\lfloor #1 \rfloor}}
\newcommand{\oneover}[1]{\frac{1}{#1}}
\newcommand{\quadres}[2]{\left(\frac{#1}{#2}\right)}
\newcommand{\st}[1]{\{#1\}}
\newcommand{\comp}{\circ}
\newcommand{\inv}{^{-1}}
\newcommand{\iso}{\cong}
\newcommand{\units}{^\times}
\newcommand{\fq}{\ff_q}
\newcommand{\fpbar}{\bar\ff_p}
\newcommand{\fqbar}{\bar\ff_q}
\renewcommand{\div}{\mid}
\renewcommand{\nsim}{\not\sim}
\renewcommand{\tilde}{\widetilde}
\newcommand{\mybar}[1]{
  \mathchoice
  {#1\llap{$\overline{\phantom{\displaystyle\rm#1}}$}}
  {#1\llap{$\overline{\phantom{\textstyle\rm#1}}$}}
  {#1\llap{$\overline{\phantom{\scriptstyle\rm#1}}$}}
  {#1\llap{$\overline{\phantom{\scriptscriptstyle\rm#1}}$}}
}  
\renewcommand{\bar}{\mybar}
\newcommand{\pibar}{{\overline{\pi}}}
\begin{document}

\title[Abelian surfaces and reductions of curves]
      {Split abelian surfaces over finite fields\\ and reductions of genus-$2$ curves}

\author{Jeffrey D. Achter}
\email{j.achter@colostate.edu}
\address{Department of Mathematics, Colorado State University, Fort Collins, CO 80523} 
\urladdr{\url{http://www.math.colostate.edu/~achter}}

\author{Everett W. Howe}
\email{however@alumni.caltech.edu}
\address{Center for Communications Research, 4320 Westerra Court, San Diego, CA 92121-1967}
\urladdr{\url{http://www.alumni.caltech.edu/~however}}

\thanks{The first author was partially supported by  grants from the
Simons Foundation (204164) and the NSA (H98230-14-1-0161 and H98230-15-1-0247)}

\dedicatory{Dedicated to the memory of Professor Tom M.~Apostol}

\date{4 October 2016}

\keywords{Abelian surface, curve, Jacobian, reduction, simplicity, reducibility, counting function}

\subjclass[2010]{Primary 14K15; Secondary 11G10, 11G20, 11G30}

\begin{abstract}
For prime powers $q$, let $\ssplit(q)$ denote the probability that a
randomly-chosen principally-polarized abelian surface over the finite field 
$\fq$ is not simple.  We show that there are positive constants $c_1$ and $c_2$
such that for all $q$,
\[
c_1 (\log q)^{-3}(\log \log q)^{-4} < \ssplit(q)\sqrt{q} < c_2 (\log q)^4(\log \log q)^2,
\]
and we obtain better estimates under the assumption of the generalized Riemann
hypothesis.  If $A$ is a principally-polarized abelian surface over a number
field $K$, let $\pi_\ssplit(A/K, z)$ denote the number of prime ideals $\idp$ 
of $K$ of norm at most $z$ such that $A$ has good reduction at $\idp$ and
$A_\idp$ is not simple. We conjecture that for sufficiently general $A$, the
counting function $\pi_\ssplit(A/K, z)$ grows like $\sqrt{z}/\log z$.  We 
indicate why our theorem on the rate of growth of $\ssplit(q)$ gives us reason
to hope that our conjecture is true.
\end{abstract}

\maketitle

\setcounter{tocdepth}{1}
\tableofcontents

\section{Introduction}
\label{secintro}

Let $A/K$ be a principally-polarized absolutely simple abelian variety over a
number field.  Murty and Patankar have conjectured 
\cite{Murty2005,MurtyPatankar2008} that if the absolute endomorphism ring of 
$A$ is commutative, then the reduction $A_\idp$ is simple for almost all primes 
$\idp$ of $\calo_K$.  (See \cite{Achter2009,Zywina2014} for work on this 
conjecture.)  Given this, it makes sense to try to quantify the (conjecturally
density zero) set of primes of good reduction for which $A_\idp$ is 
\emph{split}; that is, for which $A_\idp$ is isogenous to a product of abelian
varieties of smaller dimension. Specifically, define the counting function
\[
\pi_\ssplit(A/K, z) = \#\st{\idp : \norm(\idp)\le z
  \text{ and $A_\idp$ is split}}.
\]

Some upper bounds for the rate of growth of this function are available. For 
instance, a special case of \cite[Thm.~B, p.~42]{Achter2012} states that if the
image of the $\ell$-adic Galois representation attached to the $g$-dimensional
abelian variety $A$ is the full group of symplectic similitudes, then 
\begin{alignat}{2}
\notag
\pi_\ssplit(A/K,z) &\ll \frac{z (\log\log z)^{1+1/3(2g^2+g+1)}}
                             {(\log z)^{1+1/6(2g^2+g+1)}}
     &\text{\qquad for all $z\ge 3$;}\\
\intertext{if one is willing to assume a generalized Riemann
           hypothesis, one can further show that }
\label{eqzm}
\pi_\ssplit(A/K,z) &\ll z^{1-\oneover{4g^2+3g+4}}(\log z)^{\frac{2}{4g^2+3g+4}}
     &\text{for all $z\ge 3$.}
\end{alignat}
However, there is no reason to believe that even~\eqref{eqzm} does a very good 
job of capturing the actual behavior of the function $\pi_\ssplit(A/K,z)$.  The
purpose of the present paper is to explain and support the following hope.

\begin{conjecture}
\label{conjabsurf}
Let $A/K$ be a principally-polarizable abelian surface with absolute 
endomorphism ring $\End_{\bar K} A \cong \integ$.  Then there is a constant 
$C_A > 0$ such that
\[
\pi_\ssplit(A/K,z) \sim C_A \frac{\sqrt z}{\log z} \text{\qquad as $z\to\infty$}.
\]
\end{conjecture}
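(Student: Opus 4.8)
The statement is a conjecture rather than a theorem, so what follows is the heuristic I would use to support it, together with an indication of where a rigorous argument would have to come from and why that step is hard. The plan is to model each reduction $A_\idp$, at a prime $\idp$ of good reduction, as a uniformly random principally polarized abelian surface over the residue field, whose cardinality is $\norm(\idp)$. Under this model the events ``$A_\idp$ is split'' behave as though independent with probability $\ssplit(\norm\idp)$, and so
\[
\pi_\ssplit(A/K,z) \approx \sum_{\substack{\norm\idp\le z\\ \text{good reduction}}} \ssplit(\norm\idp) + O(1),
\]
the $O(1)$ absorbing the finitely many primes of bad reduction. The first step is thus to set up this probabilistic model; the second is to evaluate the sum.

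For the second step, suppose --- this is stronger than what we have proved --- that $\ssplit(q)\sqrt q$ tends to, or at least averages to, a positive constant $c$, so that $\ssplit(q)\sim c\,q^{-1/2}$. Writing $\pi_K(t)=\#\st{\idp:\norm\idp\le t\,}$, which by the prime ideal theorem satisfies $\pi_K(t)\sim t/\log t$, partial summation gives
\[
\sum_{\norm\idp\le z}\frac{c}{\sqrt{\norm\idp}}
 = \frac{c\,\pi_K(z)}{\sqrt z} + \frac{c}{2}\int_2^z\frac{\pi_K(t)}{t^{3/2}}\,dt
 \sim \frac{c\sqrt z}{\log z} + \frac{c\sqrt z}{\log z}
 = \frac{2c\sqrt z}{\log z},
\]
the integral being evaluated by the substitution $t=u^2$. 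This is exactly the predicted shape, with $C_A=2c$. In this crude model $C_A$ does not depend on $A$; the dependence left open in the conjecture is a hedge against the reductions failing to be distributed exactly like uniform random surfaces, but since $\End_{\bar K}A\cong\integ$ forces the $\ell$-adic Galois image to be as large as possible, one expects $C_A$ to be universal.

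Two things keep this from being a proof. The first is that our theorem traps $\ssplit(q)\sqrt q$ only between two polylogarithmic functions of $q$; it gives neither the asymptotic $\ssplit(q)\sim c\,q^{-1/2}$ nor the tame averaged behaviour that the partial summation needs. With only the polylog bounds the model predicts $\pi_\ssplit(A/K,z)\asymp\sqrt z\,(\log z)^{O(1)}$, not the sharp $\sim C_A\sqrt z/\log z$, so a substantial sharpening of the estimate for $\ssplit(q)$ --- along the lines of the refinements available under GRH --- is a prerequisite. I regard this part as within reach of the methods already in play.

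The second, and genuinely hard, point is justifying the model itself: one needs the Frobenius polynomials of the reductions $A_\idp$ to be equidistributed, with an error term strong enough to survive summation over primes against a target set whose relative size decays like $\norm(\idp)^{-1/2}$. Since an abelian surface over $\fq$ is split precisely when its Frobenius polynomial $x^4-ax^3+bx^2-aqx+q^2$ is reducible over $\rat$, that is, when $a^2-4b+8q$ is a perfect square, one is really asking how often a prescribed integer-valued function of $\frob_\idp$ takes a square value --- a question of Lang--Trotter type, where even the input one would want (an effective horizontal Sato--Tate law with power-saving error for a single genus-$2$ Jacobian) is open, and detecting square values over such a distribution is harder still. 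A realistic intermediate target, improving on the bounds recalled in~\eqref{eqzm}, would be an unconditional or GRH-conditional upper bound $\pi_\ssplit(A/K,z)\ll\sqrt z\,(\log z)^{O(1)}$, obtained by feeding the estimate for $\ssplit(q)$ into a large sieve over the reductions; the matching lower bound --- and with it the exact constant $C_A$ --- is the part that currently seems out of reach.
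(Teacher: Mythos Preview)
The statement is a conjecture, and the paper does not prove it; its support is precisely the heuristic you give---model $A_\idp$ as a uniformly random point of $\cala_2(\ff_{\norm\idp})$, invoke Theorem~\ref{thmain} to get $\ssplit(q)\approx q^{-1/2}$, and sum over primes via the prime ideal theorem to obtain $\sqrt z/\log z$ (Section~\ref{seclangtrotter}). Your write-up follows the same line and is in fact more explicit than the paper's (you carry out the partial summation and articulate clearly the two obstructions to a proof), so there is nothing to correct.
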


This statement bears some resemblance to the Lang--Trotter 
conjecture~\cite{LangTrotter1976}, whose enunciation we briefly recall.  Let
$E/\rat$ be an elliptic curve with $\End_{\bar\rat} E \iso \integ$, and fix a 
nonzero integer~$a$. Let $\pi(E,a,z)$ be the number of primes $p<z$ such that 
$E_p(\ff_p)-(p+1) = a$.  Then Lang and Trotter conjecture that 
$\pi(E,a,z) \sim C_{E,a} \sqrt z/\log z$ as $z\to\infty$, for some constant
$C_{E,a}$.  They also give a conjectural formula for the constant $C_{E,a}$,
but we shall ignore such finer information here.

In Section \ref{seclangtrotter}, we review a framework under which one might 
expect such counting functions to grow like $\sqrt z/\log z$. Roughly speaking,
the philosophy of Section~\ref{seclangtrotter} suggests that 
Conjecture~\ref{conjabsurf} should hold if the probability that a 
randomly-chosen principally-polarized abelian surface over $\fq$ is split 
varies like $q^{-1/2}$. The bulk of our paper is taken up with a proof of a 
theorem which says that, up to factors of $\log q$, this is indeed the case.

For every positive integer $n$ we let $\cala_n$ denote the moduli stack of
principally-polarized $n$-dimensional abelian varieties, so that for every 
field $K$ the objects of $\cala_n(K)$ are the $K$-isomorphism classes of such
principally-polarized varieties over~$K$. For every $n$ and $K$ we also let
$\cala_{n,\ssplit}(K)$ denote the subset of $\cala_n(K)$ consisting of the
principally-polarized abelian varieties $(A,\lambda)$ for which $A$ is not 
simple over~$K$. This is perhaps an abuse of notation, because there is no
geometrically-defined substack $\cala_{n,\ssplit}$ giving rise to the sets 
$\cala_{n,\ssplit}(K)$; our definition of `split' is sensitive to the field of
definition.

\begin{theorem}
\label{thmain}
We have
\[
\oneover{(\log q)^3 (\log\log q)^4}
\ll
\frac{\#\cala_{2,\ssplit}(\fq)}{q^{5/2}}
\ll
(\log q)^4 (\log\log q)^2 \text{\qquad for all $q$.}
 \]
If the generalized Riemann hypothesis is true, we have
\[
\oneover{(\log q) (\log\log q)^6} 
\ll
\frac{\#\cala_{2,\ssplit}(\fq)}{q^{5/2}} 
\ll
(\log q)^2(\log\log q)^4\text{\qquad for all $q$.}
\]
\end{theorem}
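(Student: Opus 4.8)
The plan is to count isomorphism classes of principally-polarized abelian surfaces $(A,\lambda)/\fq$ for which $A$ is isogenous (over $\fq$) to a product $E_1\times E_2$ of elliptic curves, and to show that the dominant contribution comes from a single well-understood family. First I would stratify $\cala_{2,\ssplit}(\fq)$ according to the isogeny type of $A$: either $A\sim E_1\times E_2$ with $E_1,E_2$ ordinary and non-isogenous, or $A\sim E^2$ for a single $E$, or one of the (comparatively sparse) mixed/supersingular cases. For each elliptic curve $E/\fq$ the number of $(A,\lambda)$ with $A\sim E_1\times E_2$ is governed by the number of polarized abelian surfaces in a fixed isogeny class, which by the theory of Deligne modules / Honda--Tate one can count in terms of class numbers of the relevant orders in the CM algebra $\rat(\pi_{E_1})\times\rat(\pi_{E_2})$. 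Summing over pairs $(E_1,E_2)$ and using that there are $\asymp q/(\log q)^{O(1)}$ ordinary elliptic curves over $\fq$, with the ``expected'' number of isomorphism classes having a given trace of Frobenius, should produce a main term of size $q^{5/2}$ times something between $(\log q)^{-O(1)}$ and $(\log q)^{O(1)}$.

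The key steps, in order, are: (1) reduce to counting weighted by automorphisms (i.e. work on the stack) and then pass to coarse counts, absorbing the discrepancy into a $\log$-power; (2) for a fixed isogeny class of the form $E_1\times E_2$, express $\#\{(A,\lambda) : A\sim E_1\times E_2\}$ via a formula for principal polarizations on products, essentially $\sum h(\calo)$ over suitable orders $\calo$, and bound this above and below by the class number $h(\rat(\pi_{E_1}))h(\rat(\pi_{E_2}))$ up to $\log$ factors; (3) sum over all ordinary Frobenius traces $a_1,a_2$ with $|a_i|<2\sqrt q$, $a_1\ne a_2$, using the Deuring/Hurwitz--Kronecker class number formula to replace $h$ by $L(1,\chi_{d_i})\sqrt{|d_i|}$ and then averaging; the average of $L(1,\chi_d)$ over fundamental discriminants is a constant, and the extra $\log q$ and $\log\log q$ factors (and, under GRH, their reduced powers) come from the known upper and lower bounds on individual $L(1,\chi_d)$ — namely $L(1,\chi_d)\gg 1/\log|d|$ unconditionally, $\gg 1/\log\log|d|$ under GRH, and $L(1,\chi_d)\ll\log|d|$, $\ll\log\log|d|$ under GRH — which is where every exponent in the statement ultimately comes from; (4) check that the $A\sim E^2$ stratum and the supersingular/mixed strata contribute $O(q^2(\log q)^{O(1)})$, hence are negligible compared to $q^{5/2}$.

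The main obstacle I expect is step (3), specifically getting matching upper and lower bounds for the sum $\sum_{a_1,a_2} h(d_1)h(d_2)$ over the correct ranges. The upper bound is the safer direction: bound each $h(d_i)$ by $O(\sqrt{|d_i|}\log|d_i|)$ and sum, which after accounting for the number of $E$ with a given $a$ (itself a Hurwitz class number, introducing another $h$) gives the stated upper exponents. The lower bound is more delicate: one cannot use $L(1,\chi_d)\gg 1/\log|d|$ for \emph{every} $d$ and hope it is tight, so instead I would restrict to a positive-density subfamily of discriminants (e.g. $|d|$ in a dyadic range near $q$, squarefree, with $L(1,\chi_d)$ of typical size, which holds for almost all $d$ by a second-moment argument) and show this subfamily alone already contributes $\gg q^{5/2}(\log q)^{-3}(\log\log q)^{-4}$. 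A secondary difficulty is bookkeeping the passage between the stack count $\#\cala_{2,\ssplit}(\fq)$ and the naive count, and making sure the polarization-counting formula in step (2) is uniform enough in the isogeny class; both of these I expect to cost only fixed powers of $\log q$, consistent with the theorem as stated. Under GRH every place where $\log|d|$ appeared is replaced by $\log\log|d|$, which mechanically converts the unconditional exponents $3,4$ and $4,2$ into $1,6$ and $2,4$.
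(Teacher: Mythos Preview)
Your stratification in step (1) matches the paper's, and your instinct in step (4) that the isotypic, mixed, and supersingular strata are $O(q^2(\log q)^{O(1)})$ is correct. The gap is in step (2), and it propagates to make the whole count come out an order of $\sqrt q$ too small.

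The claim that $\#\{(A,\lambda):A\sim E_1\times E_2\}$ is bounded above and below by $h(\rat(\pi_{E_1}))h(\rat(\pi_{E_2}))$ up to $\log$ factors is false. For a fixed ordinary nonisotypic isogeny class with traces $a_1\ne a_2$, the number of \emph{unpolarized} abelian surfaces is indeed $H(a_1^2-4q)H(a_2^2-4q)$, essentially by Deligne's equivalence, but the number of \emph{principal polarizations} is typically much larger: a single product $E\times E'$ carries many non-isomorphic principal polarizations, and more importantly the isogeny class contains many principally-polarized surfaces that are not products at all. If your bound held, summing over the $\asymp q$ pairs $(a_1,a_2)$ with each $H(a_i^2-4q)\asymp\sqrt q$ would give $\#\cala_{2,\ssplit}(\fq)\asymp q^2$ up to logs, not $q^{5/2}$.

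The missing mechanism is the Frey--Kani description: every principally-polarized $(A,\lambda)$ with $A\sim E_1\times E_2$ (nonisotypic) arises, essentially uniquely, by gluing some $E\in\cali(\fq,a_1)$ to some $E'\in\cali(\fq,a_2)$ along an anti-isometry $\eta\colon E[n]\to E'[n]$ for some $n\mid(a_1-a_2)$. The extra sum over the gluing level $n$, weighted by the number of anti-isometries, is what produces the additional factor of $\sqrt q$: one has $\sum_{n\mid m}\psi(n)\asymp m$ up to $\log\log$ factors, and $|a_1-a_2|$ is typically of size $\sqrt q$. Controlling the number of anti-isometries at level $n$ in turn requires keeping track not just of the class numbers but of the \emph{relative conductors} $[\End E:\integ[\frob_E]]$; the heart of the paper's upper bound is the estimate $\sum_{E\text{ ordinary}}\relcond(E)\ll q(\log q)^2$ (or $q(\log q)\log\log q$ under GRH), and none of this is visible from the product-of-class-numbers heuristic. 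Similarly, the paper's lower bound is obtained not by restricting to discriminants with typical $L(1,\chi)$ but by directly producing, for each small prime $\ell\equiv 1\pmod 4$, roughly $q^2$ gluing triples $(E_1,E_2,\eta)$ at level~$\ell$, and then summing over $\ell\ll\sqrt q/(\log q)^{O(1)}$; this again exploits the gluing parametrization in an essential way.
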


Since $\cala_2$ is irreducible of dimension $3$, Theorem~\ref{thmain} implies
that, up to logarithmic factors, the chance that a randomly chosen 
principally-polarized abelian surface over $\fq$ is split varies 
like~$q^{-1/2}$.

The paper closes by presenting some numerical data, including evidence in favor
of Conjecture~\ref{conjabsurf}.  We also indicate what we believe to be true
when one considers varieties that are geometrically split, and not just split
over the base field.

After the first-named author gave a preliminary report on this work, including 
some data obtained using \texttt{sage}, William Stein suggested contacting 
Andrew Sutherland for help with more extensive calculations.  Sutherland
provided us with the program \texttt{smalljac}~\cite{KedlayaSutherland2008},
which we ran on our own computers to obtain data on the mod-$p$ reductions 
of the curve $y^2 = x^5 + x + 6$ over~$\rat$; later, Sutherland kindly used
his own computers, running a program based on the algorithm 
in~\cite{HarveySutherland2014}, to provide us with reduction data for this 
curve for all primes up to $2^{30}$. It is a pleasure to acknowledge
Sutherland's assistance.  The data presented in Sections~\ref{ssec-data-relcond}
and~\ref{ssec-data-probsplit} was obtained using \texttt{gp} and \textsc{Magma}.

As we were writing up the various asymptotic estimates of number-theoretic 
functions that appear in this paper, the second-named author thought frequently
of Professor Tom M.~Apostol, in whose undergraduate Caltech course Math 160 he
first became familiar with such computations. Not long after we completed this
paper, Apostol passed away. We dedicate this work to his memory.

\subsection*{Notation and conventions}
If $Z$ is a set of real numbers and $f$ and $g$ are real-valued functions 
on~$Z$, we use the Vinogradov notation
\[
f(z) \ll g(z) \text{\qquad for $z \in Z$}
\]
to mean that there is a constant $C$ such that $\abs{f(z)} \le C \abs{g(z)}$ 
for all $z \in Z$. If $Z$ contains arbitrarily large positive reals, we use
\[
f(z) \sim g(z) \text{\qquad as $z\to\infty$}
\]
to mean that $f(z)/g(z) \to 1$ as $z\to\infty$, and we write $f(z) \asymp g(z)$
to mean that there are positive constants $C_1$ and $C_2$ such that 
$C_1\abs{g(z)} \le \abs{f(z)} \le C_2\abs{g(z)}$ for all sufficiently large~$z$.

When we are working over a finite field $\fq$, we will use without further 
comment the letter $p$ to denote the prime divisor of~$q$. This convention 
unfortunately conflicts with the standard use in analytic number theory of the
letter $p$ as a generic prime, for instance when writing Euler product 
representations of arithmetic functions.  In such situations in this paper
(see for example equation~\eqref{eqhDelta} in Section~\ref{secendomorphisms}),
we will instead use $\ell$ to denote a generic prime, and we explicitly allow 
the possibility that $\ell = p$.

A \emph{curve} over a field $K$ is a smooth, projective, irreducible variety 
over $K$ of dimension one, and a \emph{Jacobian} is the neutral component of 
the Picard scheme of such a curve.

\section{Conjectures of Lang--Trotter type}
\label{seclangtrotter}

Let $\calm$ be a moduli space of PEL type~\cite{Shimura1966}.  Let $K$ be a 
number field, let $\Delta \in \calo_K$ be nonzero, and let $S$ be the set of
primes of $K$ that do not divide $\Delta$. If $\idp$ is a prime of $K$ we let
$\ff_\idp$ denote its residue field.  Equip each finite set $\calm(\ff_\idp)$
with the uniform probability measure, and let $\underline A_\idp$ be a random
variable on $\calm(\ff_\idp)$.  Suppose that for each $\idp \in S$ a subset 
$T_\idp \subset \calm(\ff_\idp)$ is specified, with indicator function 
$I_\idp$.  Let $\underline A = \prod_{\idp \in S} \underline A_\idp$, and let
\[
\pi(\underline A,I,z) = \sum_{\idp \in S \colon \norm(\idp)<z}
                        \frac{\#T_\idp}{\#\calm(\ff_\idp)}
\]
be the expected value of 
$\sum_{\idp \in S \colon  \norm(\idp) \le z}I_\idp(\underline A_\idp)$.
If $\#{T_\idp}/\#{\calm(\ff_\idp)} \asymp 1/\norm(\idp)^m$, then Landau's 
prime ideal theorem~\cite[p.~670]{Landau1903b} yields the estimate 
$\pi(\underline A, I, z) \asymp \int_2^z \frac{dx}{x^m\log x}$. In particular, 
for $m = 1/2$ one finds that $\pi(\underline A,I,z) \asymp \sqrt z/\log z$.   

Henceforth, assume $\calm$ and $T_\idp$ are chosen so that the above holds 
with $m = 1/2$. Now suppose that $A \in \calm(\calo_K[1/\Delta])$, and let
\[
\pi(A,I,z) = \sum_{\idp \in S \colon  \norm(\idp) \le z}I_\idp(A_\idp).
\]
If one \emph{assumes} that ($A$ is sufficiently general, and thus) $A$ is 
well-modeled by the random variable~$\underline A$, then one predicts that
\begin{equation}
\label{eqpredict}
\pi(A,I,z) \asymp \frac{\sqrt z}{\log z}.
\end{equation}
(By ``sufficiently general'' one might mean, for example, that the 
Mumford--Tate group of $A$ is the same as the group attached to the Shimura 
variety $\calm$; but this will not be pursued here.)

For instance, let $\cala_1$ be the moduli stack of elliptic curves, and let 
$a$ be a nonzero integer.  On one hand, since $\cala_1$ is irreducible and 
one-dimensional, we have the estimate $\#\cala_1(\fq)\asymp q$.  On the other 
hand, the number of isomorphism classes of elliptic curves over $\fq$ with 
trace of Frobenius $a$ is the Kronecker class number $H(a^2-4q)$.  Up to 
(at worst) logarithmic factors, the class number $H(a^2-4q)$ grows like 
$\sqrt{\abs{a^2-4q}} \sim 2\sqrt q$ (see~Lemma~\ref{lemclassnumber}).
In this case, the prediction \eqref{eqpredict} yields the Lang--Trotter
conjecture. 

We interpret Theorem~\ref{thmain} as saying that the number of 
principally-polarized split abelian surfaces over $\fq$ is 
approximately~$q^{5/2}$.  This, combined with the fact that $\dim \cala_2 = 3$ 
and thus $\#\cala_3(\fq)\asymp q^3$, is the inspiration behind 
Conjecture~\ref{conjabsurf}. 

In spite of the apparent depth and difficulty of the Lang--Trotter conjecture,
we are certainly not the first to have attempted to formulate analogous 
conjectures in related contexts.  In~\cite{Murty1999}, Murty poses the problem
of counting the primes $\idp$ for which, in a given Galois representation 
$\rho\colon\gal(K) \to \gl_r(\calo_\lambda)$, the trace of Frobenius 
$\tr(\rho(\sigma_\idp))$ is a given number $a$.  The work of Bayer and 
Gonz\'alez~\cite{BayerGonzalez1997} is philosophically more similar to the 
present paper.  Bayer and Gonz\'alez consider a modular abelian variety 
$A/\rat$ and study the number of primes $p$ such that the reduction $A_p$ has
$p$-rank zero.  Unfortunately, in most situations, 
both~\cite[Conj.~8.2, p.~69]{BayerGonzalez1997}
and~\cite[Conj.~2.15, p.~199]{Murty1999} predict a counting function $\pi(z)$
which either grows like $\log\log z$ or is absolutely bounded. In contrast, 
Conjecture~\ref{conjabsurf} has the modest virtue of involving functions that
grow visibly over the range of computationally-feasible values of~$z$.

\section{Split abelian surfaces over finite fields}
\label{secsplit}

In this section we articulate the proof of Theorem~\ref{thmain}, which gives
asymptotic upper and lower bounds on the number of principally-polarized 
abelian surfaces over finite fields such that the abelian surface is isogenous 
to a product of elliptic curves.  There are several different types of such
surfaces, each of which we analyze separately.

First, there are the abelian surfaces over $\fq$ that are isogenous to a 
product $E_1\times E_2$ of two ordinary elliptic curves, with $E_1$ and $E_2$
lying in two different isogeny classes.  We call this the
\emph{ordinary split nonisotypic} case.

\begin{proposition}
\label{proponi}
The number $W_q$ of principally-polarized
ordinary split nonisotypic abelian surfaces
over~$\fq$ satisfies
\[
W_q \ll \begin{cases}
q^{5/2} (\log q)^4 (\log\log q)^2 & \text{for all~$q$, unconditionally,}\\
q^{5/2} (\log q)^2 (\log\log q)^4 & \text{for all~$q$, under GRH.}
\end{cases}
\]
\end{proposition}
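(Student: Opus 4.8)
The key observation is that a principally-polarized abelian surface isogenous to $E_1 \times E_2$ with $E_1, E_2$ ordinary elliptic curves in distinct isogeny classes must in fact \emph{be} isomorphic, as a principally-polarized variety, to a product of two polarized elliptic curves. Indeed, if $A$ is isogenous to $E_1 \times E_2$ with $E_1 \nsim E_2$, then $\Hom(E_1, E_2) = 0$ and $\End(A)$ is a product, so the decomposition into elliptic subvarieties is unique up to isogeny; the principal polarization then forces (by a standard argument with the N\'eron--Severi group and indecomposability of the polarization, or by Weil's classification of principally-polarized abelian surfaces) that $(A,\lambda) \cong (E_1', \mu_1) \times (E_2', \mu_2)$ for elliptic curves $E_i'$ in the isogeny classes of $E_i$ and $\mu_i$ the unique principal polarization on each. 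Hence $W_q$ is at most the number of unordered pairs $\{E_1', E_2'\}$ of ordinary elliptic curves over $\fq$ lying in distinct isogeny classes, which is bounded above by $\tfrac12\bigl(\sum_{E/\fq \text{ ordinary}} 1\bigr)^2 = \tfrac12 N_q^2$ where $N_q := \#\{\text{ordinary } E/\fq\}/\!\cong$.

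\emph{Step 1: Reduce to counting ordinary elliptic curves.} Make the above reduction precise, so that $W_q \le \tfrac12 N_q^2 + O(N_q)$ (the error term absorbing the ``diagonal'' and is negligible).

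\emph{Step 2: Bound $N_q$.} Group ordinary elliptic curves by trace of Frobenius $a$, which ranges over integers with $|a| < 2\sqrt q$ and $p \nmid a$. The number of $E/\fq$ with trace $a$ is the Kronecker class number $H(a^2 - 4q)$, so $N_q = \sum_{|a| < 2\sqrt q,\ p \nmid a} H(a^2 - 4q)$. By the paper's Lemma~\ref{lemclassnumber}, each $H(a^2-4q) \ll \sqrt{|a^2-4q|}\,\log|a^2-4q| \ll \sqrt q \log q$ unconditionally, and one wins an extra factor under GRH. Summing over the $O(\sqrt q)$ admissible values of $a$ gives $N_q \ll q \log q$ unconditionally, hence $W_q \ll q^2 (\log q)^2$.

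\emph{The obstacle.} This crude bound $q^2(\log q)^2$ is \emph{much} stronger than the claimed $q^{5/2}(\log q)^4(\log\log q)^2$, so in fact the real content must lie elsewhere: presumably the count $W_q$ in the proposition is \emph{not} a count up to isomorphism of pairs but a weighted count, or (more likely) I have the wrong reduction and there genuinely are $\asymp q^{5/2}$ such surfaces because one counts principal polarizations on a \emph{fixed} isogeny class $E_1 \times E_2$ — the number of such polarizations up to isomorphism can be as large as the class number of the relevant order, roughly $\sqrt q$, times the number of pairs. So the correct approach is: for each pair of isogeny classes, the number of principally-polarized abelian surfaces in that class equals the number of $\gl_2(\integ)$-equivalence classes of positive-definite binary forms representing the relevant ``type,'' which ties to the number of ideals and to class numbers of imaginary quadratic orders; the main work is to sum the quantity $h(\disc E_1) \cdot h(\disc E_2) \cdot (\text{number of polarizations})$ over all admissible trace pairs $(a_1, a_2)$ and show it is $\ll q^{5/2}(\log q)^4(\log\log q)^2$. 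The hard part will be controlling the sum of triple products of class numbers; this requires (i) the average and pointwise bounds on $H(\cdot)$ from Lemma~\ref{lemclassnumber}, (ii) a bound on the number of principal polarizations on $E_1 \times E_2$ in terms of the number of divisors or the class number of $\integ[\pi_1] = \integ[\pi_2]$ when the curves are isogenous over an extension, and (iii) careful bookkeeping of the $(\log q)(\log\log q)$-type losses at each of the two summations over $a_1$ and $a_2$, which is exactly where the exponents $4$ and $2$ (resp.\ $2$ and $4$ under GRH) come from.
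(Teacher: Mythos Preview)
Your initial reduction in Step~1 is false, and this is the essential gap. It is \emph{not} true that every principally-polarized abelian surface $(A,\lambda)$ isogenous to $E_1\times E_2$ with $E_1\nsim E_2$ is isomorphic, as a polarized variety, to a product of polarized elliptic curves. In fact, most such surfaces are Jacobians of genus-$2$ curves. The Frey--Kani construction (reviewed in Section~\ref{secgluing} of the paper) shows that if $\eta\colon E_1[n]\to E_2[n]$ is an anti-isometry of $n$-torsion group schemes for some $n\ge 2$, then the quotient of $E_1\times E_2$ by the graph of $\eta$ carries a principal polarization descended from $n$ times the product polarization; for generic $\eta$ this polarized surface is the Jacobian of a smooth curve, not a product. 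Conversely, Frey and Kani prove that \emph{every} principally-polarized split nonisotypic surface arises this way, essentially uniquely. So the count $W_q$ genuinely is of order $q^{5/2}$ (see Proposition~\ref{proplowerbound}), and your ``too strong'' bound $q^2(\log q)^2$ is simply bounding the wrong thing.

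You correctly sense in your final paragraph that one must count principal polarizations within each isogeny class, and that this contributes the missing factor of roughly $\sqrt q$. But your sketch in terms of binary forms and class-number triple products does not match the actual mechanism. The paper's argument runs as follows: the Frey--Kani parametrization reduces the count to bounding, for each pair of nonisogenous ordinary $E,E'$ and each $n$ dividing $a(E)-a(E')$, the number of anti-isometries $E[n]\to E'[n]$. This number is controlled (Lemma~\ref{lemboundantiisometries}) by $\psi(n)$ times class numbers times the \emph{relative conductors} $\relcond(E)$ and $\relcond(E')$ --- the index of $\integ[\frob]$ in $\End E$ --- because the size of $\aut E[n]$ depends on how far Frobenius is from acting as a scalar on $E[n]$. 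The proof then assembles three lemmas: a bound $\sum_{d\mid n}\psi(d)\ll n(\log\log n)^2$ handling the sum over~$n$; a bound $\sum_{\text{ord.\ }E}\relcond(E)\ll q(\log q)^2$ (or $q(\log q)\log\log q$ under GRH) handling each elliptic-curve sum; and the factorization of the double sum into a near-square of the latter. The exponents $4$ and $2$ (resp.\ $2$ and $4$) arise precisely from squaring the relative-conductor bound and multiplying by the $\psi$-sum bound. None of the relative-conductor machinery appears in your proposal, and it is the heart of the argument.
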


Second, there are the abelian surfaces over $\fq$ that are isogenous to the 
square of an ordinary elliptic curve. We call this the
\emph{ordinary split isotypic} case.  

\begin{proposition}
\label{propoi}
The number $X_q$ of principally-polarized ordinary split isotypic abelian 
surfaces over~$\fq$ satisfies
\[
X_q \ll \begin{cases}
q^2 (\log q)^2 \abs{\log\log q}   & \text{for all~$q$, unconditionally,}\\
q^2 (\log q)       (\log\log q)^2 & \text{for all~$q$, under GRH.}
\end{cases}
\]
\end{proposition}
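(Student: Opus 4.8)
The plan is to stratify the ordinary split isotypic surfaces by the isogeny class of the underlying elliptic curve, handle the decomposable polarizations by an elementary class-number count, and reduce the indecomposable ones (Jacobians of genus-$2$ curves) to a count of binary Hermitian lattices controlled by a mass formula; the main term, of size $q^2$, comes from the Jacobians.

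First I would fix an ordinary elliptic isogeny class over $\fq$: such a surface $A$ has characteristic polynomial of Frobenius $(T^2-aT+q)^2$ for an integer $a$ with $a^2<4q$ and $p\nmid a$. Write $\Delta_a=a^2-4q$, let $K=\rat(\pi)$ for $\pi$ a root of $T^2-aT+q$, and let $\calo'=\integ[\pi,\bar\pi]$ be the order of discriminant $\Delta_a$ in $K$. Then $X_q\le\sum_{\abs{a}<2\sqrt q,\ p\nmid a}X_{q,a}$, where $X_{q,a}$ counts, up to $\fq$-isomorphism, the principally polarized abelian surfaces in this isogeny class. I would split $X_{q,a}=X_{q,a}^{\mathrm{dec}}+X_{q,a}^{\mathrm{indec}}$ according as the polarization is decomposable — so $(A,\lambda)$ is a product of two polarized elliptic curves — or indecomposable — so $(A,\lambda)\cong(\jac C,\lambda_C)$ for a smooth genus-$2$ curve $C/\fq$.

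The decomposable surfaces are easy: such an $(A,\lambda)$ is determined by the unordered pair of elliptic curves out of which it is built, both lying in the isogeny class labeled by $a$, so $X_{q,a}^{\mathrm{dec}}\le\binom{N_a+1}{2}\le N_a^2$, where $N_a$ is the number of elliptic curves over $\fq$ with trace $a$; up to an $O(1)$ correction for $j\in\{0,1728\}$ this is the Kronecker class number $H(\Delta_a)$. Using the standard bounds on $H$ (Lemma~\ref{lemclassnumber}) and the fact that $\sum_a H(\Delta_a)$ is, up to $O(\sqrt q)$, the number of ordinary elliptic curves over $\fq$, hence $\asymp q$, one gets $\sum_a X_{q,a}^{\mathrm{dec}}\ll\bigl(\max_a H(\Delta_a)\bigr)\sum_a H(\Delta_a)\ll q^{3/2}(\log q)\abs{\log\log q}$, which is negligible against the asserted bound.

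The substance is the bound on $X_{q,a}^{\mathrm{indec}}$, the Jacobians $\jac C\sim E^2$. Geometrically, by Poincar\'e reducibility over $\fq$ such a $\jac C$ contains an elliptic curve $E_1$ whose $\lambda$-complement $E_2$ is again an elliptic curve in the isogeny class of $a$, and $(\jac C,\lambda)$ is recovered from $(E_1\times E_2,\text{product polarization})$ by dividing by a maximal isotropic subgroup scheme meeting each factor trivially; the governing invariant is the relative conductor $\relcond$ of $\End(\jac C)$ over $\End E_1\times\End E_2$. The cleanest bookkeeping, however, is via Deligne's equivalence for ordinary abelian varieties over $\fq$ together with Howe's description of their polarizations: the principally polarized surfaces in the isogeny class of $a$ correspond, up to $\fq$-isomorphism, exactly to the positive-definite unimodular rank-$2$ Hermitian $\calo'$-lattices up to isometry, with $\relcond$ corresponding to the index of such a lattice in a unimodular overlattice, and the decomposable ones to orthogonal direct sums of two rank-$1$ lattices. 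I would then bound the number of such lattices by combining two facts: (i) $\relcond$ is sandwiched between $\calo'$ and the maximal order $\calo_K$, so its norm is bounded polynomially in $\abs{\Delta_a}$; and (ii) for each admissible $\relcond$, the number of configurations is controlled by the class number $h(\calo')$ and the special value $L(1,\chi_K)$, via the Siegel--Shimura mass formula for positive-definite binary Hermitian forms together with the crude lower bound on automorphism groups of definite lattices. Assembling (i) and (ii) should give a per-class estimate $X_{q,a}^{\mathrm{indec}}\ll\abs{\Delta_a}^{3/2}(\log\abs{\Delta_a})^2\abs{\log\log\abs{\Delta_a}}$ unconditionally, improved to $\ll\abs{\Delta_a}^{3/2}(\log\abs{\Delta_a})(\log\log\abs{\Delta_a})^2$ under GRH, the two cases differing precisely in the strength of the available estimates for $h(\calo')$ and $L(1,\chi_K)$.

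Finally I would sum over $a$. Since $\sum_{\abs{a}<2\sqrt q}(4q-a^2)^{3/2}\asymp q^2$ (compare with $\int_{-2\sqrt q}^{2\sqrt q}(4q-t^2)^{3/2}\,dt$), the per-class bounds sum to $\sum_a X_{q,a}^{\mathrm{indec}}\ll q^2(\log q)^2\abs{\log\log q}$ unconditionally and $\ll q^2(\log q)(\log\log q)^2$ under GRH; adding the negligible decomposable contribution gives the proposition. The main obstacle is step (ii): extracting the sharp exponent $\tfrac32$ in the per-class count of binary Hermitian $\calo'$-lattices — equivalently, of gluings with prescribed relative conductor — while losing only logarithmic factors, and tracking those factors carefully enough to produce both the unconditional bound and its GRH refinement. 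Everything else is the reduction to elliptic-curve data and bookkeeping with the class-number estimates that the paper establishes separately.
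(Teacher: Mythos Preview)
Your approach is genuinely different from the paper's, and the crucial step is not actually carried out.

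The paper does not use Deligne modules or a Hermitian mass formula at all. Instead it stays within the Frey--Kani gluing framework already set up for the nonisotypic case, and the new ingredient is a bound on the gluing parameter. If $\jac C\sim E_a^2$, then the Mordell--Weil lattice of the constant curve $\mathbf{E}_a$ over the function field of $C$ has rank~$4$ and, by Milne's proof of BSD for constant curves, determinant dividing $(a^2-4q)^2$; the Hermite constant $\gamma_4=\sqrt 2$ then yields a map $C\to E_a$ of degree at most $\sqrt{2q}$ (Lemma~\ref{lemmapdegree}). Factoring this through a minimal map $C\to E$ and an isogeny $E\to E_a$ bounds the minimal degree by $\sqrt{2q}/s(E,E_a)$. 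One then sums the anti-isometry bound of Lemma~\ref{lemboundantiisometries} over $n\le\sqrt{2q}/s(E,E_a)$ via $\sum_{n\le x}\psi(n)\ll x^2$, collapses the sum over $E$ in a stratum using $\sum_{E'\in\cals}s(E,E')^{-2}<\zeta(3)/\relcond(\cals)^2$ (Lemma~\ref{lemsmallestisogenies}), and finally invokes the already-proved global bound $\sum_{\text{ordinary }E}\relcond(E)\ll q(\log q)^2$ (Lemma~\ref{lemboundsumrelcond}). The log factors in the proposition are exactly the ones inherited from this last lemma together with a single $\abs{\log\log q}$ from $\sum_{f\mid\cond_{a,q}}1/f$.

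Your proposal, by contrast, hinges entirely on the assertion that the number of unimodular rank-$2$ positive-definite Hermitian $\calo'$-lattices is $\ll\abs{\Delta_a}^{3/2}(\log\abs{\Delta_a})^2\abs{\log\log\abs{\Delta_a}}$, and this is where the gap lies. You do not state the mass formula, and the ingredients you name --- $h(\calo')$ and $L(1,\chi_K)$ --- together account for only $\sqrt{\abs{\Delta_a}}$ up to logs; you never explain where the remaining factor of $\abs{\Delta_a}$ comes from, nor how the formula behaves over the \emph{non-maximal} order $\calo'=\integ[\pi]$. Worse, a per-class bound with only those log factors is unlikely to hold uniformly in~$a$: when $\cond_{a,q}$ is highly composite, divisor-type factors such as $A(\cond_{a,q})$ enter, and these are not $O((\log\abs{\Delta_a})^{O(1)})$. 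The paper avoids this by summing over $a$ \emph{before} bounding, so that such factors average out via Lemma~\ref{lemboundsumrelcond}; your ``bound each class, then sum'' strategy cannot do this. Two smaller points: to pass from mass to count you need an \emph{upper} bound on $\abs{\aut L}$, not a lower bound; and you should note, as the paper does, that restrictions of scalars never land in an ordinary split isotypic isogeny class, which is why the decomposable/indecomposable dichotomy is exhaustive here.
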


Third, there are the abelian surfaces over $\fq$ that are isogenous to the 
product of two elliptic curves, exactly one of which is supersingular. We call 
this the \emph{almost ordinary split} case.

\begin{proposition}
\label{proppro}
The number $Y_q$ of principally-polarized almost ordinary split abelian 
surfaces over~$\fq$ satisfies
\[
Y_q \ll \begin{cases}
q^2 (\log q)    (\log\log q)^2 & \text{for all $q$, unconditionally,}\\
q^2         \abs{\log\log q}^3 & \text{for all~$q$, under GRH.}
\end{cases}
\]
\end{proposition}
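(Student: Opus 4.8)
The plan is to stratify the almost ordinary split surfaces over $\fq$ by the pair of $\fq$-isogeny classes of their two elliptic isogeny factors, and then to run, for each such pair, the same kind of isotypic/gluing analysis that underlies the ordinary nonisotypic case of Proposition~\ref{proponi}. An almost ordinary split surface is $\fq$-isogenous to a product $E_1 \times E_2$ with $E_1$ ordinary and $E_2$ supersingular; since no ordinary elliptic curve is isogenous to a supersingular one, these factors lie in distinct isogeny classes, determined by Frobenius traces $a_1$ and $a_2$. There are $\ll \sqrt q$ possibilities for $[E_1]$ (the integers $a_1$ with $a_1^2 < 4q$ and $p \nmid a_1$), but --- and this is exactly the source of the saved power of $\sqrt q$ relative to Proposition~\ref{proponi} --- only $O(1)$ possibilities for $[E_2]$, because the set of Frobenius traces of supersingular elliptic curves over $\fq$ has size $O(1)$ by the classification of supersingular Weil $q$-numbers. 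So it remains to bound, uniformly in $[E_1]$ and $[E_2]$, the number of principally polarized surfaces $\fq$-isogenous to $E_1 \times E_2$.

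For that count I would decompose such a surface $A$ into its isotypic parts, which are honest elliptic curves $B_1 \sim E_1$ and $B_2 \sim E_2$ with $B_1 + B_2 = A$, so that $A = (B_1 \times B_2)/\Gamma$ with $\Gamma \iso B_1 \cap B_2$ embedded antidiagonally. A principal polarization $\lambda$ on $A$ pulls back along $B_1 \times B_2 \to A$ to a block-diagonal polarization (because $\Hom(B_1,B_2) = 0$), necessarily of the form $([n]\lambda_{B_1}) \times ([n]\lambda_{B_2})$ for a common integer $n$ (here a short kernel/degree computation forces the two factors to have the same $n$, using principality), and then $\Gamma$ must be the graph of an anti-isometry $\psi\colon B_1[n] \xrightarrow{\sim} B_2[n]$ of Weil pairings. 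Since everything is defined over $\fq$, the map $\psi$ intertwines the Frobenius actions, so the characteristic polynomials $x^2 - a_ix + q$ of Frobenius on $B_1[n]$ and $B_2[n]$ agree modulo $n$; hence $n \mid (a_1 - a_2)$, and in particular $n < 4\sqrt q$ (note $a_1 \ne a_2$, since $p \nmid a_1$ but $p \mid a_2$). Conversely, each datum $(B_1,B_2,n,\psi)$ produces an almost ordinary split principally polarized surface, so the number of principally polarized surfaces isogenous to $E_1 \times E_2$ is $\ll \#\st{B_1 \sim E_1}\cdot \#\st{B_2 \sim E_2}\cdot N(a_1,a_2)$, where $N(a_1,a_2) = \sum_{n \mid (a_1-a_2)}\nu(n)$ and $\nu(n)$ counts the Frobenius-equivariant anti-isometries $B_1[n] \xrightarrow{\sim} B_2[n]$. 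When it is nonempty, the set counted by $\nu(n)$ is a torsor under the centralizer of Frobenius in $\mathrm{Sp}(B_1[n]) \iso \mathrm{SL}_2(\integ/n)$, which is essentially a rank-one torus; a prime-by-prime estimate bounds $\nu(n)$ by $n$ times a small divisor factor, and summing over $n \mid (a_1-a_2)$ shows $N(a_1,a_2) \ll \sqrt q$ up to factors of $\log\log q$.

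Assembling, $Y_q$ is at most the product of three quantities. First, $\sum_{[E_1]}\#\st{B_1 \sim E_1}$ is the number of ordinary elliptic curves over $\fq$, which is $\ll q$. Second, $\sum_{[E_2]}\#\st{B_2 \sim E_2}$ is the number of supersingular elliptic curves over $\fq$: each of the $O(1)$ supersingular isogeny classes contains $\ll \sqrt q\,(\log q)(\log\log q)$ curves --- a Kronecker class number $H(a_2^2-4q)$ bounded via the class-number estimate (Lemma~\ref{lemclassnumber}) when Frobenius generates an imaginary quadratic field, and a better mass-formula bound $\ll \sqrt q$ when $\pi \in \integ$ --- and under GRH the class-number bound improves to $\ll \sqrt q\,(\log\log q)^2$. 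Third, the uniform bound $N(a_1,a_2) \ll \sqrt q\,(\log\log q)^{O(1)}$ from the previous paragraph. Multiplying these yields $Y_q \ll q^2(\log q)(\log\log q)^{O(1)}$ in general and $Y_q \ll q^2(\log\log q)^{O(1)}$ under GRH, and tracking the divisor sums carefully gives the precise exponents of $\log\log q$ stated in the proposition.

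I expect the main obstacle to be the structural input of the second paragraph: showing that every almost ordinary split principally polarized surface really is a gluing $(B_1 \times B_2)/\Gamma_\psi$ of two elliptic curves along a Frobenius-equivariant anti-isometry of common $n$-torsion with $n \mid (a_1-a_2)$, and then bounding $\nu(n)$ uniformly in $n$ through the structure of the centralizer of a (generically regular) element of $\mathrm{SL}_2(\integ/n)$. Everything else --- the classification of supersingular traces, the estimate $\ll q$ for the number of ordinary curves, and the Kronecker-class-number estimate for the supersingular factor --- is standard, and the only remaining delicacy is the divisor-sum bookkeeping needed to pin down the logarithmic factors.
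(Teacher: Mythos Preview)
Your approach is correct and essentially matches the paper's for the case where the supersingular factor $B_2$ has an imaginary quadratic endomorphism ring (the paper's $Y_{q,2}$ case). There the relative conductor of $B_2$ is a power of $p$ times at most a factor of~$2$, so $\gcd(n,\relcond(B_2)) \le 2$ for $n$ coprime to~$p$, and your bound $\nu(n) \ll \psi(n)$ holds; the rest of the bookkeeping is as you describe and yields the stated exponents.

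There is, however, a genuine gap when $q$ is a square and $B_2$ has Frobenius equal to the integer $s = \pm\sqrt{q}$ (the paper's $Y_{q,1}$ case). Then Frobenius acts as a \emph{scalar} on $B_2[n]$, so its centralizer in $\mathrm{SL}_2(\integ/n)$ is the entire group, of size~$\asymp n^3$, not a rank-one torus of size~$\asymp n$ as you assert. Your bound $\nu(n) \ll n$ therefore fails by a factor of~$n^2$; summing $\sum_{n \mid (a_1-a_2)} n^2\psi(n)$ with $|a_1-a_2| < 4\sqrt q$ gives $N(a_1,a_2)$ as large as~$q^{3/2}$, and your three-factor product then overshoots by a full power of~$q$. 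There are $\asymp p \le \sqrt{q}$ such supersingular curves, so this case cannot be absorbed.

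The paper repairs this with an extra congruence (Lemma~\ref{lemfrobint}): if $B_1[n]\cong B_2[n]$ with $\frob_{B_2}=s$, then not only must $n \mid \relcond(B_1)$, but in fact $4a_1 \equiv 8s \bmod n^2$. Thus for each $n$ there are only $O(\sqrt q/n^2)$ admissible ordinary traces~$a_1$, rather than $O(\sqrt q/n)$. This saving of $n^2$ in the trace count exactly cancels the loss of $n^2$ in $\nu(n)$; one then bounds $\sum_{E\in\cali(\fq,a_1)}\#\isom\inv(E_0[n],E[n])$ by $2\psi(n)n^2 H((a_1^2-4q)/n^2) \ll \psi(n)\,n\,\sqrt q\,(\log q)(\log\log q)^2$, sums over the $O(\sqrt q/n^2)$ traces, and finally sums $\sum_{n\le 4\sqrt q}\psi(n)/n \ll \sqrt q$ via Lemma~\ref{lemboundpsisum2}. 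If you isolate the scalar-Frobenius case and incorporate this mod-$n^2$ constraint, your argument goes through.
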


And fourth, there are the abelian surfaces over $\fq$ that are isogenous to 
the product of two supersingular elliptic curves. We call this the
\emph{supersingular split} case.

\begin{proposition}
\label{propss}
The number $Z_q$ of principally-polarized supersingular split abelian surfaces
over $\fq$ satisfies $Z_q\ll q^2$ for all $q$.
\end{proposition}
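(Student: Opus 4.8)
The plan is to exploit the fact that, in contrast to the ordinary cases treated in Propositions~\ref{proponi}--\ref{proppro}, only finitely many isogeny classes of split supersingular abelian surfaces occur over~$\fq$. If $(A,\lambda)$ is counted by $Z_q$, then $A$ is $\fq$-isogenous to a product $E_1\times E_2$ of two supersingular elliptic curves over~$\fq$; and by Waterhouse's classification of the Weil numbers of supersingular elliptic curves, the trace of Frobenius of such a curve over~$\fq$ is one of the integers $0$, $\pm\sqrt q$, $\pm 2\sqrt q$, $\pm\sqrt{pq}$, and at most five of these can occur for a given~$q$. Hence the Weil polynomial of $A$ is one of $O(1)$ possibilities, and it suffices to prove that each fixed such isogeny class contains $\ll q^2$ principally-polarized abelian surfaces.

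First I would fix one of these isogeny classes, attached say to $E_1\times E_2$, and split into two cases according to the endomorphism algebra \emph{over $\fq$} (which, unlike the endomorphism algebra over $\fqbar$, need not be a matrix algebra over $B_{p,\infty}$). If $q$ is a square and $E_1$ and $E_2$ both have trace $\pm 2\sqrt q$, then $\pi_A$ acts as the scalar $\pm\sqrt q$, every surface in the isogeny class is superspecial, and the number of principally-polarized ones is the class number of the principal genus of rank-$2$ positive-definite quaternion Hermitian forms over a maximal order in $B_{p,\infty}$; the Hashimoto--Ibukiyama mass formula, together with the fact that the relevant finite unitary groups have uniformly bounded order, shows this class number is $\ll p^3\ll q^{3/2}$. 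In every other case each of $E_1$ and $E_2$ has commutative endomorphism algebra over $\fq$ --- one of the imaginary quadratic fields $\rat(i)$, $\rat(\sqrt{-3})$, $\rat(\sqrt{-p})$ --- so that $\End^0_{\fq}(E_1\times E_2)$ is a $2\times 2$ matrix algebra over, or a product of, such fields; one then bounds the number of principally-polarized surfaces in the isogeny class by the same lattice-and-polarization bookkeeping used for Propositions~\ref{proponi} and~\ref{propoi} (a sum, over the orders of conductor $\ll\sqrt q$, of products of ideal class numbers and local polarization counts), and this is $\ll q^{3/2}$ up to logarithmic factors. In either case the isogeny class contributes $\ll q^2$, and summing over the $O(1)$ isogeny classes gives $Z_q\ll q^2$.

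I expect this to be the easiest of the four propositions, and I do not foresee a genuine obstacle. Because only boundedly many isogeny classes arise, there is no summation over isogeny classes --- the mechanism that forces the extra powers of $\log q$ into Proposition~\ref{proponi} --- and because the target bound $q^2$ exceeds the true order of magnitude of $Z_q$ (which is $\ll q^{3/2}$, up to logarithmic factors) one may freely use crude estimates. The only input that is global in nature is the quaternion Hermitian mass formula controlling the superspecial surfaces; everything else is a degenerate special case of the machinery already developed for the other three propositions.
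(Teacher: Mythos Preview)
Your proposal contains a genuine gap. The assertion that ``$\pi_A$ acts as the scalar $\pm\sqrt q$, every surface in the isogeny class is superspecial'' is not justified and is in fact false once $q$ is a sufficiently high even power of~$p$. Any non-superspecial principally-polarized supersingular surface $A_0$ over $\fpbar$ is defined over some~$\ff_{p^n}$; base-changing to $\ff_{p^{nk}}$ for $k$ divisible by the orders of the relevant roots of unity forces the Frobenius to equal $\pm p^{nk/2}$, and the $a$-number is a geometric invariant. So the isogeny class with $\pi=\pm\sqrt q$ contains non-superspecial principally-polarized surfaces, and the Hashimoto--Ibukiyama class number (which counts only superspecial ones) does not bound it. These non-superspecial surfaces are exactly the ones your proposal misses, and they are also the ones for which the machinery of Propositions~\ref{proponi} and~\ref{propoi} breaks down: Proposition~\ref{propautsize} and Lemma~\ref{lemboundantiisometries} explicitly exclude $n$ divisible by $p$ in the supersingular case, and the Deligne-module lattice picture you invoke is valid only for ordinary varieties. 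A Frey--Kani gluing along $n$-torsion with $\gcd(n,p)=1$ always produces a surface of $a$-number~$2$, so every non-superspecial split surface necessarily arises from a gluing with $p\mid n$, outside the range of that machinery. Your case analysis is also incomplete: the product of two supersingular curves with traces $+2\sqrt q$ and $-2\sqrt q$ has $\End^0_{\fq}$ isomorphic to $\rat_{p,\infty}\times\rat_{p,\infty}$, which fits neither of your two cases.

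The paper's proof takes a genuinely different route and does not attempt to count within isogeny classes at all for proper prime powers. It instead invokes the geometry of the supersingular locus $\cala_2^\supersing$: this locus is (geometrically) a union of $H_2(1,p)\ll p^2$ components, each an open subset of a rational curve, so it has $\ll p^2 q$ rational points over~$\fq$; combined with the bounded-twist Lemma~\ref{lemforms}, this bounds \emph{all} principally-polarized supersingular surfaces --- split or not --- by $\ll p^2 q\le q^2$. For $q=p$ the paper does use an arithmetic argument, but the key step (showing every split supersingular Jacobian over $\ff_p$ is superspecial) relies on the Mordell--Weil lattice bound of Lemma~\ref{lemmapdegree}, which gives a map of degree less than $p$ and hence a prime-to-$p$ gluing. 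That input, or the moduli-space geometry, is what your sketch is missing.
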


To prove the lower bound in Theorem~\ref{thmain}, we estimate the number of 
ordinary split nonisotypic surfaces.

\begin{proposition}
\label{proplowerbound}
The number of $W_q$ of principally-polarized ordinary split nonisotypic 
abelian surfaces over $\fq$ satisfies
\[
W_q \gg \begin{cases}
\displaystyle
\frac{q^{5/2}}{(\log q)^3 (\log\log q)^4} & \text{for all~$q$, unconditionally,}\\
\\
\displaystyle
\frac{q^{5/2}}{(\log q)   (\log\log q)^6} & \text{for all~$q$, under GRH.}
\end{cases}
\]
\end{proposition}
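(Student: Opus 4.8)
The plan is to produce many principally-polarized ordinary split nonisotypic abelian surfaces by gluing pairs of ordinary elliptic curves from *different* isogeny classes along a common torsion subgroup, and then to count the resulting surfaces carefully enough that the two-to-one-ish nature of the construction (and the question of when such a gluing admits a principal polarization) only costs logarithmic factors. First I would fix a way of producing elliptic curves: for a carefully chosen range of integers $a_1, a_2$ with $\abs{a_i} < 2\sqrt q$, $p \nmid a_i$, and $a_1 \neq a_2$, the number of isomorphism classes of ordinary elliptic curves over $\fq$ with trace $a_i$ is the Kronecker class number $H(a_i^2 - 4q)$, which by Lemma~\ref{lemclassnumber} is of size $\sqrt{\abs{a_i^2 - 4q}} \asymp \sqrt q$ up to logarithmic factors. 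So from a single pair $(a_1, a_2)$ one already gets $\asymp q$ (up to logs) isogeny-class-distinct pairs $(E_1, E_2)$.

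Next I would pass from the pair $(E_1, E_2)$ to a principally-polarized abelian surface isogenous to $E_1 \times E_2$. The product polarization on $E_1 \times E_2$ is already principal, so in fact each such pair $(E_1, E_2)$ \emph{immediately} gives a principally-polarized surface $(E_1 \times E_2, \lambda_1 \times \lambda_2)$, and since $E_1, E_2$ lie in distinct ordinary isogeny classes this surface is ordinary, split, and nonisotypic. The only subtlety is overcounting: two pairs $(E_1, E_2)$ and $(E_1', E_2')$ can give $\fq$-isomorphic polarized surfaces, but for a product of two nonisotypic elliptic curves the decomposition as a polarized variety is essentially unique (the elliptic curves are recovered as the images of the two idempotents, up to swapping the factors when $a_1 = a_2$, which we have excluded), so the overcounting is bounded. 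Hence
\[
W_q \gg \sum_{(a_1, a_2)} H(a_1^2 - 4q)\, H(a_2^2 - 4q),
\]
where the sum runs over suitable pairs of distinct traces coprime to $p$.

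To extract the stated bound I would then lower-bound this double sum. It suffices to find a single good pair $(a_1, a_2)$, or better a short range of them, on which $H(a_i^2 - 4q)$ is not too far below its typical size $\sqrt q$. This is where the class-number lower bound does the work: $H(D) \gg \sqrt{\abs D}/(\log\abs D \cdot \log\log\abs D)$ unconditionally (via Siegel-type or, more elementarily, via the relation to $L(1,\chi_D)$ and a lower bound on the latter, which is the source of the $\log$ factors), and $H(D) \gg \sqrt{\abs D}/\log\log\abs D$ under GRH. Choosing, say, $a_1$ and $a_2$ to be two distinct small integers coprime to $p$ (which exist since at most one of $1,2,3,\dots$ is divisible by $p$ once we look far enough, and $a_i^2 - 4q$ has the right sign and size), we get $W_q \gg (\sqrt q / (\log q \log\log q)^{?})^2 \asymp q/(\dots)$ — but that is only $\asymp q$, not $q^{5/2}$.

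So the construction above is not yet enough, and the real point — which I expect to be the main obstacle — is that one must also \emph{vary the gluing}, not just the pair $(E_1, E_2)$. That is, instead of only taking the product polarization, one glues $E_1$ and $E_2$ along an anti-isometric pair of order-$N$ subgroup schemes (for $N$ ranging over a large set of integers, or over a large cyclic piece of torsion) to get many non-isomorphic principally-polarized surfaces all isogenous to $E_1 \times E_2$. For each $E_i$ the number of relevant cyclic subgroups of a given order behaves like that order up to logs, and matching them up compatibly with polarizations contributes another factor of roughly $q$ (one is choosing a point in something like $\mathbb{P}^1$ of the $N$-torsion, giving $\asymp N$ choices, summed over $N$ up to $\asymp \sqrt q$); combined with the $\asymp q$ pairs $(E_1,E_2)$ and the $\asymp \sqrt q$ from the two class numbers this yields the desired $q^{5/2}$ up to logarithmic corrections. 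The hard part will be (i) getting the count of admissible principal polarizations on a glued surface right — this is a lattice/theta-group computation controlling when an isogeny $E_1 \times E_2 \to A$ carries the product polarization to a principal one — and (ii) controlling the overcounting across different $(E_1, E_2, N, \text{gluing})$ data so that the final bound is only off from $q^{5/2}$ by the stated powers of $\log q$ and $\log\log q$; the logarithmic losses come precisely from the class-number estimate of Lemma~\ref{lemclassnumber} (one factor of $\log q \cdot (\log\log q)^2$ per class number unconditionally, improving under GRH), which accounts for the exponents $3$ and $4$ (resp.\ $1$ and $6$) in the statement.
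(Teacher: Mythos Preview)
Your outline has the right high-level idea --- vary the gluing datum, not just the pair $(E_1,E_2)$ --- but there is a genuine gap in the mechanism you propose, and the paper's route is substantively different.

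The main problem is your appeal to a class-number \emph{lower} bound of the shape $H(D)\gg\sqrt{\abs{D}}/(\log\abs D\cdot\log\log\abs D)$ unconditionally. No such effective bound is known: Siegel's theorem is ineffective, and the best effective unconditional lower bounds (Goldfeld--Gross--Zagier and its refinements) are far too weak for this purpose. Even under GRH your stated bound is not quite right, and in any case the exponents $3,4$ (resp.\ $1,6$) in the statement do not arise from two applications of a class-number lower bound in the way you suggest. Since this is the step you flag as ``where the class-number lower bound does the work,'' the argument as written does not go through.

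The paper avoids class-number lower bounds entirely. Instead it fixes a prime $\ell\equiv 1\pmod 4$ coprime to $q$ and groups the ordinary elliptic curves over $\fq$ into ``symplectic types'' according to the isomorphism class of $E[\ell]$ together with the Weil pairing; a short computation shows there are at most $\ell+4$ types. Since the total number of ordinary curves is $\gg q$ (no class-number input needed --- this is a point count on the $j$-line), pigeonhole gives $\gg q^2/\ell$ ordered pairs $(E_1,E_2)$ of the same symplectic type, and for $\ell\equiv 1\pmod 4$ each such pair carries at least $\ell-1$ anti-isometries $E_1[\ell]\to E_2[\ell]$. One then \emph{subtracts} the isogenous pairs, and it is here that the class-number \emph{upper} bounds of Lemma~\ref{lemclassnumber} enter: they force $\ell$ to be bounded by roughly $q^{1/2}/\bigl((\log q)^2(\log\log q)^4\bigr)$ (or $q^{1/2}/(\log\log q)^6$ under GRH) so that the subtraction is harmless. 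Summing over primes $\ell$ in this range via Dirichlet yields the stated bound, and the extra $\log q$ comes from the prime-counting step.

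So the missing idea is the symplectic-type pigeonhole, which replaces your hoped-for ``many curves per isogeny class'' by ``many curves per $\ell$-torsion type,'' and the logarithmic losses come from class-number \emph{upper} bounds (limiting $\ell$) and from prime counting, not from class-number lower bounds.
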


It is clear that together these propositions provide a proof of 
Theorem~\ref{thmain}.  We will prove the propositions in the following
sections.  We begin with some background information and results on 
endomorphism rings of elliptic curves over finite fields 
(Section~\ref{secendomorphisms}) and a review of `gluing' elliptic curves 
together (Section~\ref{secgluing}).

\section{Endomorphism rings of elliptic curves over finite fields}
\label{secendomorphisms}

In this section we set notation and give some background information on
endomorphism rings of elliptic curves over finite fields.  With the exception
of the concepts of `strata' and of the `relative conductor', most of the
results on endomorphism rings we mention are standard 
(see~\cite[Ch.~4]{Waterhouse1969} and~\cite{Schoof1987}, and note that
\cite[Thm.~4.5, p.~194]{Schoof1987} corrects a small error
in~\cite[Thm.~4.5, p.~541]{Waterhouse1969}).  

Let $E$ be an elliptic curve over a finite field $\fq$. The substitution 
$x\mapsto x^q$ induces an endomorphism $\frob_{E} \in \End E$ called the 
\emph{Frobenius} endomorphism. The characteristic polynomial of $\frob_{E}$ 
(acting, say, on the $\ell$-adic Tate module of $E$ for some $\ell\neq p$)
is of the form $f_{E}(T) = T^2-a(E)T+q$ for an integer $a(E)$, the 
\emph{trace of Frobenius}. Two elliptic curves $E$ and $E'$ are isogenous if
and only if $a(E) = a(E')$, and Hasse~\cite{Hasse1936a,Hasse1936b,Hasse1936c}
showed that $\abs{a(E)}\le 2 \sqrt q$. We will denote the isogeny class
corresponding to $a$ by
\[
\cali(\fq,a) = \st{E/\fq\colon a(E) = a}.
\]
The isogeny class $\cali(\fq,a)$ is called \emph{ordinary} if $\gcd(a,q) = 1$,
and \emph{supersingular} otherwise 
(see~\cite[p.~526 and Ch.~7]{Waterhouse1969}).  The supersingular curves $E$ 
are characterized by the property that $E[p](\fqbar) \iso \st 0$.

If $E/\fq$ is a supersingular elliptic curve, then $\End_{\fqbar} E$ is a
maximal order in $\rat_{p,\infty}$, the quaternion algebra over $\rat$ ramified
exactly at $\st{p,\infty}$.   There are two possibilities for $\End E$ itself.  
It may be that all of the geometric endomorphisms of $E$ are already defined 
over $\fq$, so that $\End E$ is a maximal order in $\rat_{p,\infty}$; this 
happens when $q$ is a square and $a(E)^2 = 4q$.  The other possibility is that 
$\End E$ is an order in an imaginary quadratic field; in this case, the 
discriminant of $\End E$ is either $-p$, $-4p$, $-3$, or~$-4$.  
(See Table~\ref{tab1} in Section~\ref{secprankone} for the exact conditions
that determine the various cases.)

Suppose $\cali(\fq,a)$ is an isogeny class with $a^2 \neq 4q$.  Then 
$\calo_{a,q} := \integ[T]/(T^2-aT+q)$ is an order in the imaginary quadratic 
field $K_{a,q} := \rat(\sqrt{a^2-4q})$, and is isomorphic to the subring
$\integ[\frob_E]$ of $\End E$ for every $E\in\cali(\fq,a)$.  An order $\calo$ 
in $K_{a,q}$ occurs as $\End E$ for some $E \in \cali(\fq,a)$ if and only if 
$\calo \supseteq \calo_{a,q}$ and $\calo$ is maximal at~$p$ 
(see~\cite[Thm.~4.2, pp.~538--539]{Waterhouse1969} 
or~\cite[Thm.~4.3, p.~193]{Schoof1987}).  Note that the maximality at~$p$ is 
automatic when  $\cali(\fq,a)$ is ordinary, because in that case $q$ is coprime
to the discriminant $a^2 - 4q$ of $\calo_{a,q}$. If we let $\cali(\fq,a,\calo)$
denote the set of isomorphism classes of elliptic curves in $\cali(\fq,a)$ with
endomorphism ring $\calo$, we can write $\cali(\fq,a)$ as a disjoint union
\[
\cali(\fq,a) = \bigsqcup_{\calo\supseteq \calo_{a,q}}
                  \cali(\fq,a,\calo),
\] 
where $\calo$ ranges over all orders of $K_{a,q}$ that contain $\calo_{a,q}$
and that are maximal at~$p$. If $a$ is coprime to~$q$, or if $a=0$ and $q$ is
not a square, then each of the sets $\cali(\fq,a,\calo)$ appearing in the 
equality above is a torsor for the class group $\cl(\calo)$ of the 
order~$\calo$.  In particular, $\#\cali(\fq,a,\calo)$ is equal to the class
number $h(\calo)$ of $\calo$ (see~\cite[Thm.~4.5, p.~194]{Schoof1987}).

We will refer to a nonempty set of the form $\cali(\fq,a,\calo)$ as a
\emph{stratum} of elliptic curves over $\fq$.  Given a stratum $\cals$, we will
denote the associated trace by $a(\cals)$ and the associated quadratic order by
$\calo_\cals$. If $\cals$ and $\cals'$ are two strata over $\fq$, we say that
$\cals$ and $\cals'$ are \emph{isogenous}, and write $\cals\sim\cals'$, if the
elliptic curves in $\cals$ are isogenous to those in $\cals'$ --- that is, if 
$a(\cals) = a(\cals')$.

For any imaginary quadratic order $\calo$ we let $\Delta(\calo)$ denote the
discriminant of $\calo$ and $\Delta^*(\calo)$ the associated fundamental
discriminant --- that is, the discriminant of the integral closure of $\calo$ 
in its field of fractions. Then
\[
\Delta(\calo) = \cond(\calo)^2 \Delta^*(\calo),
\]
where $\cond(\calo)$ is the \emph{conductor} of $\calo$.  For a trace of
Frobenius $a$ with $a^2 \neq 4q$ we will write $\Delta_{a,q}$, $\cond_{a,q}$,
and $\Delta^*_{a,q}$ for the corresponding quantities associated to 
$\calo_{a,q}$. 

Let $E/\fq$ be an elliptic curve whose endomorphism ring is a quadratic order.
We define the \emph{relative conductor} $\relcond(E)$ of $E$ by
\[
\relcond(E) = \frac{\cond(\calo_{a,q})}{\cond(\End E)};
\]
this quantity is also equal to the index of $\calo_{a,q}\cong\integ[\frob_E]$ 
in $\End E$. If $E/\fq$ is a supersingular elliptic curve with endomorphism 
ring equal to an order in a quaternion algebra, we adopt the convention 
$\relcond(E)  = 0$. The relative conductor depends only on the stratum of~$E$,
so for a stratum $\cals$ we may define $\relcond(\cals)$ to be the relative 
conductor of any curve in~$\cals$.

\begin{proposition}
\label{proprelcond}
Let $E/\fq$ be an elliptic curve with $\End E$ a quadratic order.
\begin{alphabetize}
\item 
The relative conductor $\relcond(E)$ is the largest integer $r$ such that 
there exists an integer $b$ with 
\[
\frac{\frob_E - b}{r} \in \End E.
\]
\item 
The relative conductor $\relcond(E)$ is the largest integer $r$ for which 
$\frob_E$ acts as an integer on the group scheme $E[r]$.
\item 
If $E$ is ordinary, the relative conductor $\relcond(E)$ is the largest 
integer $r$, coprime to $q$, for which $\frob_E$ acts as an integer on the 
group $E[r](\fqbar)$.
\end{alphabetize}
\end{proposition}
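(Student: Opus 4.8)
The plan is to prove the three characterizations in sequence, deriving (b) and (c) from (a) once (a) is in hand. The key object is the relation $\relcond(E) = [\End E : \integ[\frob_E]]$, which is given to us by the definition of the relative conductor; I would take this index description as the starting point.

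First, for part (a): set $\calo = \End E$, which is an order in the imaginary quadratic field $K_{a,q}$, and write $r = \relcond(E) = [\calo : \integ[\frob_E]]$. Since $\integ[\frob_E] = \integ \oplus \integ\frob_E$ and $\calo = \integ \oplus \integ\omega$ for a suitable $\omega$, the index being $r$ means precisely that $\integ[\frob_E] = \integ \oplus \integ(r\omega)$, so $\calo = \integ + \integ\cdot\frac{\frob_E - b}{r}$ for some integer $b$ with $0 \le b < r$, because $\frob_E$ itself lies in $\calo = \integ + \integ\cdot r\inv(\frob_E - b)$ only after correcting $\frob_E$ by an integer to land in the sublattice generated by $r\omega$. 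Conversely, if $(\frob_E - b)/r \in \calo$ for some integer $b$ and some $r' \ge 1$, then $r'\cdot\bigl((\frob_E-b)/r'\bigr) + b = \frob_E$ shows $\integ[\frob_E] \subseteq \integ + \integ\cdot(\frob_E-b)/r'$, an order of index $r'$ in $\integ[\frob_E]$ (the index of $\integ \oplus \integ\frob_E$ in $\integ \oplus \integ\cdot r'\inv(\frob_E-b)$ is $r'$), hence $r' \mid [\calo : \integ[\frob_E]] = r$; so $r$ is indeed the largest such integer. This is essentially linear algebra over $\integ$ in a rank-$2$ lattice, so I do not expect it to be the main obstacle.

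For part (b): $\frob_E$ acts as an integer on the finite group scheme $E[r]$ exactly when $\frob_E - b$ kills $E[r]$ for some integer $b$, i.e.\ when $\frob_E - b$ factors through multiplication by $r$ in $\End E$ (using that $E[r]$ is the kernel of $[r]$ and that an endomorphism vanishing on $E[r]$ is divisible by $[r]$ in $\End E$ — this divisibility holds because $\End E$ is the endomorphism ring of an elliptic curve, where $\phi$ killing $E[r]$ forces $\phi = [r]\psi$ for some $\psi \in \End E$). So $\frob_E$ acts as an integer on $E[r]$ iff $(\frob_E - b)/r \in \End E$ for some $b$, which by part (a) happens iff $r \mid \relcond(E)$; hence the largest such $r$ is $\relcond(E)$. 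The one subtlety here is the divisibility statement for group schemes in possibly small characteristic — when $p \mid r$ one must argue with $E[r]$ as a group scheme, not just its $\fqbar$-points — but this is a standard fact about elliptic curves and I would cite it rather than prove it.

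For part (c): when $E$ is ordinary, $E[r](\fqbar)$ has full order $r^2$ as long as we also know $E[p^k](\fqbar)$ has order $p^k$ in the ordinary case. Actually the cleanest route: if $r$ is coprime to $q$ (equivalently to $p$), then $E[r]$ is étale, so $E[r](\fqbar)$ is a faithful model of the group scheme $E[r]$ and $\frob_E$ acting as an integer on the group equals $\frob_E$ acting as an integer on the group scheme; by part (b) the largest such $r$ dividing $\relcond(E)$ and coprime to $q$ is the $p$-prime part of $\relcond(E)$. But for ordinary $E$, $\relcond(E)$ is automatically coprime to $p$: the conductor $\cond(\calo_{a,q})$ divides $\Delta_{a,q} = a^2 - 4q$, which is coprime to $p$ in the ordinary case (as noted in the excerpt, $q$ is coprime to $a^2 - 4q$ when $\cali(\fq,a)$ is ordinary). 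Hence in the ordinary case the $p$-prime part of $\relcond(E)$ is $\relcond(E)$ itself, and part (c) follows. I would flag the ordinary-implies-$p\nmid\relcond(E)$ step as the place where one uses the hypothesis essentially; everything else is formal once (a) and (b) are established.

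Overall the main obstacle, such as it is, is the bookkeeping in part (a) showing that the index-$r$ condition translates exactly to solvability of $(\frob_E - b)/r \in \End E$ with the correct maximality — i.e.\ pinning down that a smaller $r'$ works iff $r' \mid r$ — and making sure the group-scheme divisibility used in (b) is correctly invoked in characteristic $p$. Neither of these is deep; the proposition is really a translation exercise packaging the definition $\relcond(E) = [\End E : \integ[\frob_E]]$ into three equivalent forms that will each be convenient later.
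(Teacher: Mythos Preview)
Your proposal is correct and follows essentially the same route as the paper: prove (a) by a rank-$2$ lattice computation expressing $\End E$ as $\integ + \integ\cdot(\frob_E - b)/r$, deduce (b) from (a) via the standard fact that an endomorphism killing $E[r]$ factors through~$[r]$, and then pass to (c) by noting that for $r$ coprime to $p$ the group scheme $E[r]$ is \'etale and hence determined by its Galois module. The only notable difference is in the last step: you argue that $\relcond(E)$ is coprime to $p$ because $\cond_{a,q}^2 \mid a^2 - 4q$ and $a^2-4q$ is prime to $p$ in the ordinary case, whereas the paper instead shows directly that $\frob_E$ cannot act as an integer on the group scheme $E[p]$ (it is noninvertible on the local part but nonzero on the reduced part); these are two sides of the same coin. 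One small expository slip: in your converse for (a) you write ``an order of index $r'$ in $\integ[\frob_E]$'' when you mean that $\integ[\frob_E]$ has index $r'$ in $\integ + \integ\cdot(\frob_E-b)/r'$, as your parenthetical correctly states.
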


\begin{proof}
Let $\calo$ be the maximal order containing $\End E$ and let $\omega$ be an 
element of $\calo$ such that $\calo = \integ[\omega]$.  Write 
$\frob_E = u + v\omega$ for integers $u$ and $v$; then 
$\integ[\frob_E] = \integ + v\calo$, so $v = \cond(\integ[\frob_E])$.

On one hand, suppose $r$ is an integer for which there is an integer $b$ with 
$(\frob_E - b)/r \in \End E$.  Then $\End E\supseteq \integ + (v/r)\calo$, 
so $r$ is a divisor of the relative conductor.  On the other hand, if $s$ is
the relative conductor of $E$, then 
$\End E = \integ + (v/s)\calo = \integ[(v/s)\omega]$, 
so $(\frob_E - u) / s$ is an element of $\End E$.  This proves~(a).

If $\frob_E$ acts as an integer $b$ on the group scheme~$E[r]$, then the 
endomorphism $\frob_E - b$ kills $E[r]$.  This implies that $\frob_E - b$ 
factors through multiplication-by-$r$, which means that $(\frob_E - b)/r$ lies 
in $\End E$.  Conversely, if $(\frob_E - b)/r$ lies in $\End E$, then $\frob_E$
acts on $E[r]$ as the integer~$b$.  Thus, (b) follows from~(a).

Suppose $E$ is ordinary. The endomorphism $\frob_E$ does not act as an integer 
on the group scheme $E[p]$, because it acts non-invertibly (consider the local
part of $E[p]$), but not as zero (consider the reduced part of $E[p]$).
Therefore, the integer defined by (b) will not change if we add the requirement
that $r$ be coprime to $p$.  For integers $r$ coprime to $p$, the group scheme 
$E[r]$ is determined by the Galois module $E[r](\fqbar)$. Thus, (c) follows 
from~(b).
\end{proof}

\begin{corollary}
\label{correlcond}
Let $E/\fq$ be an elliptic curve with relative conductor~$r$, and let $n$
be a positive integer.  The largest divisor $d$ of $n$ such that $\frob_E$ 
acts as an integer on $E[d]$ is equal to $\gcd(n,r)$.
\end{corollary}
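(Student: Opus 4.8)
The plan is to reduce this to a statement purely about divisors of integers and then invoke part~(b) of Proposition~\ref{proprelcond}. First I would observe that, by Proposition~\ref{proprelcond}(b), the integers $m$ for which $\frob_E$ acts as an integer on $E[m]$ are precisely the divisors of $r$: if $\frob_E$ acts as an integer on $E[m]$, then $m \mid r$ since $r$ is the largest such integer; conversely, if $m \mid r$, then $E[m]$ is a subgroup scheme of $E[r]$ (via the inclusion coming from multiplication by $r/m$, or just because $E[m] \subseteq E[r]$ as closed subschemes), so $\frob_E$ acts as an integer on $E[m]$ as well. (One should be slightly careful here: the cleanest formulation is that $\frob_E - b$ kills $E[r]$ for the appropriate $b$, hence kills the closed subscheme $E[m]$, hence $\frob_E$ acts as $b$ on $E[m]$.)

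Granting this, the set of $d \mid n$ such that $\frob_E$ acts as an integer on $E[d]$ is exactly $\{d : d \mid n \text{ and } d \mid r\} = \{d : d \mid \gcd(n,r)\}$, which is the set of divisors of $\gcd(n,r)$. The largest element of this set is $\gcd(n,r)$ itself, and since $\gcd(n,r) \mid n$, it is indeed a divisor of $n$. This gives the claim.

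I do not anticipate a genuine obstacle here; the only point requiring a moment's care is the ``downward closure'' step — verifying that if $\frob_E$ acts as an integer on $E[r]$ then it acts as an integer on $E[d]$ for every $d \mid r$. The slick way to see this is to note that the integer $b$ with $(\frob_E - b)/r \in \End E$ from Proposition~\ref{proprelcond}(a) automatically satisfies $(\frob_E - b)/d \in \End E$ whenever $d \mid r$, since $(\frob_E - b)/d = (r/d)\cdot(\frob_E - b)/r$ is then a composition of endomorphisms; then Proposition~\ref{proprelcond}(b) (or its proof) says $\frob_E$ acts as $b$ on $E[d]$. With that in hand the corollary is immediate, so the write-up will be short.
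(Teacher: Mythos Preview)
Your treatment of the case where $\End E$ is a quadratic order is along the same lines as the paper's (the paper says only that this case ``follows immediately from Proposition~\ref{proprelcond}''), and your justification of downward closure via Proposition~\ref{proprelcond}(a) is clean. One minor point: the implication ``$\frob_E$ acts as an integer on $E[m]$ $\Rightarrow$ $m \mid r$'' does not follow merely from $r$ being the \emph{largest} such integer. You need a short extra step: for instance, if $\frob_E$ acts as an integer on both $E[m]$ and $E[r]$, then by picking a single integer $b$ compatible with both congruences one sees that $\frob_E - b$ is divisible (in $\End E$) by both $m$ and $r$, hence by $\operatorname{lcm}(m,r)$; maximality of $r$ then forces $\operatorname{lcm}(m,r)=r$, i.e.\ $m\mid r$. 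Alternatively, rerun the computation in the proof of Proposition~\ref{proprelcond}(a) to see directly that the set of such $m$ is exactly the set of divisors of $r$. This is easy to patch.

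The genuine gap is that you have entirely omitted the case where $\End E$ is an order in a quaternion algebra. Proposition~\ref{proprelcond} is stated only for $E$ with $\End E$ a quadratic order, so you cannot invoke it here, and the paper's convention in this case is $\relcond(E)=0$. The corollary then asserts that the largest $d\mid n$ on which $\frob_E$ acts as an integer is $\gcd(n,0)=n$. The paper dispatches this case in one line: when $\End E$ is a quaternion order, $q$ is a square and $\frob_E = \pm\sqrt{q}$ is literally an integer in $\End E$, so $\frob_E$ acts as an integer on $E[d]$ for every $d$, and the largest such $d$ dividing $n$ is $n$ itself. This case is not merely cosmetic---it is used downstream (e.g.\ Lemma~\ref{lemboundantiisometries} is applied in Section~\ref{secprankone} with $\relcond(E_0)=0$)---so it must be included.
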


\begin{proof}
When $\End E$ is a quadratic order, this follows immediately from
Proposition~\ref{proprelcond}.  If the endomorphism ring of $E$ is an order
in a quaternion algebra, then $q$ is a square and $\frob_E = \pm \sqrt{q}$; 
that is, $\frob_E$ \emph{is} an integer, so that $d = n = \gcd(n,0)$.
\end{proof}

Later in the paper we will need to have bounds on the sizes of the
automorphism groups of schemes of the form $E[n]$ for ordinary $E$ and 
positive integers $n$.  Our bounds will involve the Euler function 
$\varphi(n)$ as well as the arithmetic function $\psi$ defined by
$\psi(n) = n \prod_{\ell\div n} (1 + 1/\ell)$.

\begin{proposition}
\label{propautsize}
Let $E$ be an elliptic curve over~$\fq$, let $n$ be a positive integer, and
let $g = \gcd(n,\relcond(E))$. If $E$ is supersingular, assume that $n$ is
coprime to $q$. Then
\begin{equation}
\label{eqnautsize}
\varphi(n) \le \frac{\#\aut E[n]}{g^2 \varphi(n)} \le \psi(n).
\end{equation}
\end{proposition}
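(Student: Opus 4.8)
The plan is to reduce to the case where $n$ is a prime power and then to treat primes $\ell\neq p$, where $E[\ell^k]$ is \'etale and everything comes down to a centralizer count in $\gl_2$, separately from the prime $p$, where (by the hypothesis) $E$ is ordinary and one must use the connected--\'etale structure of $E[p^k]$. First I would apply the Chinese remainder theorem: writing $n=\prod_\ell \ell^{k_\ell}$ one has $E[n]\iso\prod_\ell E[\ell^{k_\ell}]$, and since $\Hom(E[\ell^a],E[m^b])=0$ for $\ell\neq m$, every automorphism of the group scheme $E[n]$ preserves the $\ell$-primary pieces, so $\#\aut E[n]=\prod_\ell\#\aut E[\ell^{k_\ell}]$. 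As $\varphi$ and $\psi$ are multiplicative and $\gcd(n,\relcond E)=\prod_\ell\gcd(\ell^{k_\ell},\relcond E)$, it suffices to prove~\eqref{eqnautsize} for $n=\ell^k$; here $g=\gcd(\ell^k,\relcond E)$ is a power $\ell^j$ of $\ell$ with $0\le j\le k$.

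For $\ell\neq p$, the group scheme $E[\ell^k]$ is finite \'etale, so it corresponds to the $\gal(\fqbar/\fq)$-module $E[\ell^k](\fqbar)\iso(\integ/\ell^k)^2$ on which Frobenius acts as $\frob_E$, and therefore $\aut_{\fq}E[\ell^k]$ is the centralizer of $\frob_E$ in $\gl_2(\integ/\ell^k)$. If $j=k$ --- which covers every supersingular $E$, since then $n$ is prime to $q$ and either $\relcond E=0$ or $\ell^k\mid\relcond E$ --- Corollary~\ref{correlcond} shows $\frob_E$ acts as an integer, hence as a scalar matrix, on $E[\ell^k]$, so the centralizer is all of $\gl_2(\integ/\ell^k)$ and a direct count gives $\#\aut E[\ell^k]/(g^2\varphi(\ell^k))=\ell^{k-2}(\ell^2-1)$, which lies strictly between $\varphi(\ell^k)=\ell^{k-1}(\ell-1)$ and $\psi(\ell^k)=\ell^{k-1}(\ell+1)$. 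If instead $j<k$, Corollary~\ref{correlcond} says $\frob_E$ acts as some integer $b$ on $E[\ell^j]$ but on no larger level, so on a suitable basis $\frob_E=bI+\ell^jN$ with $N\in\mat_2(\integ/\ell^{k-j})$ \emph{not} scalar modulo $\ell$. A matrix commutes with $\frob_E$ precisely when it commutes with $N$ modulo $\ell^{k-j}$; since a matrix that is non-scalar mod $\ell$ is non-derogatory, the centralizer of $N$ in $\mat_2(\integ/\ell^{k-j})$ is the subring $S:=(\integ/\ell^{k-j})[N]$, free of rank $2$ over $\integ/\ell^{k-j}$, and an element of $S$ is invertible in $\mat_2(\ff_\ell)$ after reduction mod $\ell$ exactly when it is a unit of $S$ (the $2\times2$ determinant restricts to the norm form on $S/\ell S$, and $\ell$ is nilpotent in $S$). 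Counting lifts to $\mat_2(\integ/\ell^k)$ then yields $\#\aut E[\ell^k]=\ell^{4j}\cdot\#S\units$. Finally, $S/\ell S$ is a two-dimensional $\ff_\ell$-algebra, hence $\iso\ff_{\ell^2}$, $\ff_\ell\times\ff_\ell$, or $\ff_\ell[\epsilon]/(\epsilon^2)$, and in each case $\ell^{2(k-j-1)}(\ell-1)^2\le\#S\units\le\ell^{2(k-j-1)}(\ell^2-1)$; dividing $\ell^{4j}\#S\units$ by $g^2\varphi(\ell^k)=\ell^{2j+k-1}(\ell-1)$ gives exactly $\varphi(\ell^k)\le\#\aut E[\ell^k]/(g^2\varphi(\ell^k))\le\psi(\ell^k)$.

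It remains to handle $n=p^k$, where $E$ is ordinary; then $\relcond E$ divides $\cond_{a,q}$, which is prime to $p$ because $a^2-4q$ is, so $g=1$ and the assertion becomes $\#\aut_{\fq}E[p^k]=\varphi(p^k)^2$. Over $\fqbar$ the connected--\'etale sequence of $E[p^k]$ splits, since $\mathrm{Ext}^1_{\fqbar}(\integ/p^k,\mu_{p^k})$ vanishes in characteristic $p$; because $E$ is ordinary the connected part is $\mu_{p^k}$, so $E[p^k]_{\fqbar}\iso\mu_{p^k}\oplus\integ/p^k$, and as $\Hom(\mu_{p^k},\integ/p^k)=\Hom(\integ/p^k,\mu_{p^k})=0$ we get $\End_{\fqbar}E[p^k]\iso\integ/p^k\times\integ/p^k$ and $\#\aut_{\fqbar}E[p^k]=\varphi(p^k)^2$. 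Then I would check that the Galois action on $\aut_{\fqbar}E[p^k]$ is trivial: the semilinear descent automorphism of $E[p^k]_{\fqbar}$ preserves the connected part, hence --- again using $\Hom(\integ/p^k,\mu_{p^k})=0$ --- is block-diagonal with respect to a splitting, and conjugation by it fixes the ``diagonal'' multiplication maps that constitute $\aut_{\fqbar}E[p^k]$; therefore $\aut_{\fq}E[p^k]=\aut_{\fqbar}E[p^k]$ has order $\varphi(p^k)^2$. Dividing by $g^2\varphi(p^k)=\varphi(p^k)$ gives $\varphi(p^k)$, which lies between $\varphi(p^k)$ and $\psi(p^k)$.

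I expect the case $\ell=p$ to be the main obstacle: it requires correctly identifying the group-scheme structure of $E[p^k]$ for ordinary $E$ over a finite field --- the splitting of the connected--\'etale sequence, the vanishing of the relevant $\Hom$ and $\mathrm{Ext}$ groups in characteristic $p$, and the descent of all geometric automorphisms to $\fq$ --- none of which is purely formal. By contrast, the $\ell\neq p$ case, although it branches into the scalar sub-case and the split, inert, and ramified sub-cases of $S/\ell S$, is an elementary finite computation whose only real input is the description of the scalar part of $\frob_E$ supplied by Corollary~\ref{correlcond}.
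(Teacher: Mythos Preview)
Your argument follows the paper's exactly: reduce to prime powers by multiplicativity, treat $\ell=p$ via the connected--\'etale splitting of $E[p^k]$ (the paper works directly over $\fq$ rather than passing to $\fqbar$ and descending, but the content is the same), and treat $\ell\neq p$ by identifying $\aut E[\ell^k]$ with the centralizer of Frobenius in $\gl_2(\integ/\ell^k)$. Where the paper writes $\gamma=dI+g\beta$ and cites \cite{AvniOnnEtAl2009,Williams2012} for the centralizer count, you carry it out explicitly via the ring $S=(\integ/\ell^{k-j})[N]$; this is a self-contained improvement and the computation is correct.

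One harmless slip: your parenthetical claim that every supersingular $E$ falls under the $j=k$ case is false. A supersingular curve with imaginary-quadratic $\End E$ can have positive $\relcond(E)$ not divisible by~$\ell^k$; for instance, over $\ff_{p^3}$ with $p\equiv 3\bmod 4$ the stratum with $\Delta=-p$ has $\relcond=2p$, so taking $\ell=2$ and $k\ge 2$ gives $j=1<k$. This does no damage, however, since your $j<k$ analysis nowhere uses ordinarity and therefore covers such curves as well.
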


\begin{proof}
Every term in the inequality is multiplicative in~$n$, so it suffices to
consider the case where $n$ is a prime power~$\ell^e$.

Suppose $\ell=p$.  In this case, $E$ must be ordinary by assumption.  Note 
that the relative conductor divides the discriminant $a^2 - 4q$, where
$a = a(E)$ is coprime to $p$ because $E$ is ordinary.  Therefore the relative
conductor is coprime to~$p$, so $g = 1$.  

The group scheme $E[n]$ is the product of a reduced-local group scheme $G_1$
and a local-reduced group scheme $G_2$, each of rank~$n$. The group scheme 
$G_1$ is geometrically isomorphic to $\integ/n$, with Frobenius acting as 
multiplication by an integer (which is congruent to $a$ modulo~$q$).  The 
automorphism group of $G_1$ is $(\integ/n)\units$, and has 
cardinality~$\varphi(n)$.

The group scheme $G_2$ is geometrically isomorphic to $\mmu_n$, the group 
scheme of $n$-th roots of unity, with Frobenius acting as power-raising by an 
integer.  The automorphism group of $G_2$ is also $(\integ/n)\units$, and has 
cardinality~$\varphi(n)$.

Since there are no nontrivial morphisms between $G_1$ and $G_2$, the
automorphism group of $E[n]$ is the product of the automorphism groups of $G_1$
and~$G_2$.  Thus, when $n$ is a power of $p$ the middle term 
of~\eqref{eqnautsize} is equal to $\varphi(n)$, and the two inequalities
of~\eqref{eqnautsize} both hold.

Now suppose $\ell\neq p$.  In this case, the group scheme $E[n]$ can be
understood completely in terms of its geometric points and the action of 
Frobenius on them. The group $E[n](\fqbar)$ is isomorphic to $(\integ/n)^2$,
and if we fix such an isomorphism the Frobenius endomorphism is given by an 
element $\gamma$ of $\gl_2(\integ/n)$ whose trace is $a$ and whose determinant
is~$q$.   The automorphism group of $E[n]$ is then isomorphic to the subgroup 
of $\gl_2(\integ/n)$ consisting of those elements that commute with~$\gamma$;
that is, the centralizer $Z(\gamma)$ of $\gamma$.

Let $r$ be the largest divisor of $n$ such that $\frob_E$ acts as an 
integer on $E[r]$; Proposition~\ref{proprelcond} shows that $r = g$. Then 
there is an integer $d$ (uniquely determined modulo~$g$) and a matrix 
$\beta\in \gl_2(\integ/n)$ such that 
$g \cdot \beta \in g \mat_2(\integ/n) \iso\mat_2(\integ/(n/g))$ is cyclic and 
such that $\gamma = d\cdot I + g\cdot\beta$. 
(See \cite{AvniOnnEtAl2009,Williams2012} for details.)

Given this expression for $\gamma$, we can explicitly compute the centralizer
$Z(\gamma)$.  If $g = n$ then $Z(\gamma) = \gl_2(\integ/n)$, so $Z(\gamma)$ has 
order $n\psi(n)\varphi(n)^2$.  If $g$ is a proper divisor of $n$ then 
$Z(\gamma)$ is the group of all $\alpha \in \gl_2(\integ/n)$ such that the
image of $\alpha$ in $\gl_2(\integ/(n/g)) \subset \mat_2(\integ/(n/g))$ lies 
in the $\integ/(n/g)$-span of $I$ and~$\beta$. The order of this subgroup of 
$\gl_2(\integ/(n/g))$ is equal to $\varphi(n/g)$ times
\[
\begin{cases}
   \psi(n/g)  & \text{if $\beta \bmod \ell$ has no eigenvalues in $\integ/\ell$;}\\
        n/g   & \text{if $\beta \bmod \ell$ has 1  eigenvalue  in $\integ/\ell$;}\\
\varphi(n/g)  & \text{if $\beta \bmod \ell$ has 2  eigenvalues in $\integ/\ell$,}
\end{cases}
\]
so the order of its preimage in $\gl_2(\integ/n)$ is either 
$g^2 \psi(n)\varphi(n)$ or $g^2 n\varphi(n)$ or $g^2 \varphi(n)^2$.
In every case we find that 
\[g^2 \varphi(n)^2 \le \#Z(\gamma) \le g^2\psi(n)\varphi(n),\]
which gives~\eqref{eqnautsize}. 
(Alternative methods of calculating $Z(\gamma)$ can be found in~\cite{Williams2012}.)
\end{proof}

Later in the paper we would like to have estimates for the sizes of isogeny
classes and strata; since these sizes are given by class numbers, we close
this section by reviewing some bounds on class numbers.

We denote the class number of an imaginary quadratic order $\calo$ by
$h(\calo)$; this is the size of the group of equivalence classes of invertible
fractional ideals of $\calo$.  We let $H(\calo)$ denote the Kronecker class 
number of~$\calo$, defined by
\[
H(\calo) = \sum_{\calo'\supseteq\calo} h(\calo'),
\]
where the sum is over all quadratic orders that contain $\calo$. If $\Delta$
is the discriminant of an imaginary quadratic order $\calo$, we write 
$h(\Delta)$ and $H(\Delta)$ for $h(\calo)$ and $H(\calo)$, respectively.

\begin{lemma}
\label{lemclassnumber}
We have
\begin{align*}
h(\Delta) &\ll 
  \begin{cases} 
    \abs{\Delta}^{1/2}\log\abs{\Delta} 
       & \text{\quad for fundamental $\Delta<0$},\\
    \abs{\Delta}^{1/2}\log\abs{\Delta} \log\log\abs{\Delta} 
       & \text{\quad for all $\Delta<0$\textup{;}}
\end{cases}\\    
H(\Delta) &\ll \abs{\Delta}^{1/2} \log\abs{\Delta} (\log\log\abs{\Delta})^2
   \text{\qquad for all $\Delta<0$}.
\end{align*}
If the generalized Riemann hypothesis is true, we have
\begin{align*}
h(\Delta) &\ll 
  \begin{cases} 
    \abs{\Delta}^{1/2}\log\log\abs{\Delta} 
       & \text{\quad for fundamental $\Delta<0$},\\
    \abs{\Delta}^{1/2}(\log\log\abs{\Delta})^2
       & \text{\quad for all $\Delta<0$\textup{;}}
\end{cases}\\    
H(\Delta) &\ll \abs{\Delta}^{1/2} (\log\log\abs{\Delta})^3
   \text{\qquad for all $\Delta<0$}.
\end{align*}
\end{lemma}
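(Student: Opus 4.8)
The plan is to reduce everything to the classical formula relating the class number of an imaginary quadratic order to the class number of its maximal order, namely
\[
h(\Delta) = h(\Delta^*)\,\frac{\cond}{[\calo^*\units : \calo\units]}\prod_{\ell \mid \cond}\left(1 - \quadres{\Delta^*}{\ell}\oneover{\ell}\right),
\]
where $\Delta = \cond^2\Delta^*$ with $\Delta^*$ fundamental, together with the standard analytic estimates for $h(\Delta^*)$ and $L(1,\chi_{\Delta^*})$. First I would dispose of the fundamental case: by Dirichlet's class number formula, $h(\Delta^*) \asymp \abs{\Delta^*}^{1/2} L(1,\chi_{\Delta^*})$ up to the bounded factor from the unit group, and the upper bound $L(1,\chi_{\Delta^*}) \ll \log\abs{\Delta^*}$ is elementary (partial summation on the character sum), while under GRH one has $L(1,\chi_{\Delta^*}) \ll \log\log\abs{\Delta^*}$ by the standard conditional bound (e.g.\ Littlewood). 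This immediately yields the claimed bounds on $h(\Delta)$ for fundamental $\Delta$.

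Next I would handle general $\Delta$. Here the conductor contributes a factor $\cond\prod_{\ell\mid\cond}(1 - \quadres{\Delta^*}{\ell}/\ell)$, which is $\le \cond\prod_{\ell\mid\cond}(1+1/\ell) = \psi(\cond)$. Since $\psi(\cond) \ll \cond\log\log\cond$ by Mertens-type estimates for $\psi$, and $\cond \le \abs{\Delta}^{1/2}$, we get $h(\Delta) \ll \abs{\Delta}^{1/2} h(\Delta^*) / \abs{\Delta^*}^{1/2} \cdot \log\log\abs{\Delta}$; but wait — this overcounts, since $\cond^2\abs{\Delta^*} = \abs{\Delta}$ means the clean bound is $h(\Delta) \le \psi(\cond)\, h(\Delta^*)$, and $\psi(\cond)\abs{\Delta^*}^{1/2} = \abs{\Delta}^{1/2}\cdot(\psi(\cond)/\cond) \ll \abs{\Delta}^{1/2}\log\log\abs{\Delta}$. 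Combining with the fundamental bound on $h(\Delta^*)$ in terms of $L(1,\chi_{\Delta^*}) \le L(1,\chi_{\Delta^*})$, and noting $\log\abs{\Delta^*}\le\log\abs{\Delta}$, gives the stated unconditional and conditional bounds on $h(\Delta)$ for all $\Delta$.

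Finally, for the Kronecker class number $H(\Delta) = \sum_{\calo'\supseteq\calo}h(\calo')$, the orders containing $\calo$ are indexed by divisors $f\mid\cond$, and each contributes $h(\cond^2 f^{-2}\Delta^*) \le \psi(\cond/f)\,h(\Delta^*)$; summing over $f\mid\cond$ gives $H(\Delta) \le h(\Delta^*)\sum_{f\mid\cond}\psi(\cond/f) = h(\Delta^*)\cdot(\psi\ast\mathbf{1})(\cond)$. One then bounds the divisor-type convolution $(\psi\ast\mathbf{1})(\cond) \ll \cond(\log\log\cond)^2$ — this is essentially $\cond\prod_{\ell^k\|\cond}(1 + (k+1)(1+1/\ell) + \cdots)$, dominated by $\cond\prod_{\ell\mid\cond}(1+1/\ell)\cdot d(\cond)$-type factors, and the $\log\log$ powers come from the Mertens product over $\ell\mid\cond$ together with a crude bound on the number of prime factors. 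Plugging in the fundamental-case estimate for $h(\Delta^*)$ then yields $H(\Delta)\ll\abs{\Delta}^{1/2}\log\abs{\Delta}(\log\log\abs{\Delta})^2$ unconditionally and $\abs{\Delta}^{1/2}(\log\log\abs{\Delta})^3$ under GRH. The main obstacle is purely bookkeeping: getting the exponents on $\log\log$ exactly right in the convolution sum $(\psi\ast\mathbf{1})(\cond)$, since naive bounds on $d(\cond)$ would lose a genuine power of $\log$ rather than just $\log\log$; the fix is to keep the Euler product over $\ell\mid\cond$ intact and only estimate the resulting $\prod_{\ell\mid\cond}(1+O(1/\ell))$ and the exponent-dependent local factors separately, using that $\sum_{\ell\mid\cond}1/\ell \ll \log\log\cond$.
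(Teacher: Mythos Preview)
Your proposal is correct and follows essentially the same route as the paper: reduce to the fundamental case via the conductor formula, cite the elementary bound $L(1,\chi)\ll\log\abs{\Delta^*}$ (and Littlewood under GRH), and control the conductor contribution by $\cond\prod_{\ell\mid\cond}(1+1/\ell)\ll\cond\log\log\cond$. The only cosmetic difference is in the Kronecker sum: the paper pulls the Euler product outside before summing, obtaining $H(\Delta)\le\cond^{-1}h(\Delta^*)\sigma(\cond)^2$ and then invoking $\sigma(\cond)\ll\cond\log\log\cond$ twice, whereas you bound termwise and are left with $(\psi\ast\mathbf 1)(\cond)=\sum_{d\mid\cond}\psi(d)$; this convolution is $\ll\cond(\log\log\cond)^2$ by exactly the Euler-product argument you sketch (and in fact this bound appears separately in the paper as Lemma~\ref{lemboundsumpsi}), so the two computations are interchangeable.
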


\begin{proof}
The unconditional bound on $h(\Delta)$ for fundamental $\Delta$ comes 
from~\cite[Exer.~5.27, p.~301]{Cohen1993}, and the conditional bound
from~\cite[Thm.~1, p.~367]{Littlewood1927}.

For an arbitrary negative discriminant $\Delta$, write 
$\Delta = \cond^2\Delta^*$ for a fundamental discriminant $\Delta^*$, and let 
$\chi$ be the quadratic character modulo $\Delta^*$. Then
\begin{equation}
\label{eqhDelta}
h(\Delta) =   \cond h(\Delta^*) \prod_{\ell\div \cond} \left(1 - \frac{\chi(\ell)}{\ell}\right)
          \le \cond h(\Delta^*) \prod_{\ell\div \cond} \left(1 + \oneover{\ell}\right)
          \le       h(\Delta^*) \sigma(\cond),
\end{equation}
where $\sigma$ is the sum-of-divisors function (and we recall that $\ell$ 
ranges over all prime divisors of $\cond$). Since $\sigma(n) \ll n \log\log n$
for $n>2$ by~\cite[Thm.~323, p.~266]{HardyWright1968}, we find that
\[
h(\Delta) \ll \cond h(\Delta^*) \log\log\abs{\Delta} \text{\qquad for all $\Delta<0$.}
\]
Combining this with the class number bounds for fundamental discriminants
gives us the bounds for arbitrary discriminant.  

For Kronecker class numbers, note that
\begin{align*}
H(\Delta) = \sum_{f\div\cond} h(f^2\Delta^*)
          = \sum_{f \div \cond} f h(\Delta^*) \prod_{\ell\div f} \left(1 - \frac{\chi(\ell)}{\ell}\right)
          \le h(\Delta^*) \bigg(\sum_{f \div \cond}f\bigg)\prod_{\ell\div \cond} \left(1 + \oneover{\ell}\right)
          \le \cond\inv h(\Delta^*) \sigma(\cond)^2,
\end{align*}
so that 
\[
H(\Delta) \ll \cond h(\Delta^*) (\log\log\abs{\Delta})^2\text{\qquad for all $\Delta<0$.}
\]
This leads to the desired bounds on $H(\Delta)$.
\end{proof}

\section{Gluing elliptic curves}
\label{secgluing}

In this section, we review work of Frey and Kani~\cite{FreyKani1991} that 
explains how to construct principally-polarized abelian surfaces from pairs 
of elliptic curves provided with some extra structure. First, we discuss 
isomorphisms of torsion subgroups of elliptic curves.

Let $E$ and $F$ be elliptic curves over a field $K$ and let $n>0$ be an 
integer. We let $\isom(E[n],F[n])$ denote the set of group scheme isomorphisms
between the $n$-torsion subschemes of $E$ and~$F$. The Weil pairing gives us 
nondegenerate alternating pairings
\[
E[n]\times E[n] \to  \mmu_n
\text{\quad and\quad} 
F[n]\times F[n] \to  \mmu_n
\]
from the $n$-torsion subschemes of $E$ and of $F$ to the $n$-torsion of the 
multiplicative group scheme. Via the Weil pairing, we get a map 
\[ 
m\colon \isom(E[n],F[n]) \to \aut \mmu_n \iso (\integ/n\integ)\units.
\]
For every $i\in (\integ/n\integ)\units$ we let $\isom^i(E[n],F[n])$ denote
the set $m\inv(i)$, so that $\isom^1(E[n],F[n])$ consists of the group scheme 
isomorphisms that respect the Weil pairing, and $\isom\inv(E[n],F[n])$
consists of the anti-isometries from $E[n]$ to $F[n]$.

If $\eta$ is an anti-isometry from $E[n]$ to $F[n]$, then the graph $G$ of 
$\eta$ is a subgroup scheme of $(E\times F)[n]$ that is maximal isotropic 
with respect to the product of the Weil pairings.  It follows 
from~\cite[Cor.~to Thm.~2, p.~231]{Mumford1974} that $n$ times the canonical 
principal polarization on $E\times F$ descends to a principal polarization 
$\lambda$ on the abelian surface $A := (E\times F)/G$.  In this situation, 
we say that the polarized surface $(A,\lambda)$ is obtained by \emph{gluing} 
$E$ and $F$ together along their $n$-torsion subgroups via~$\eta$.

Frey and Kani~\cite{FreyKani1991} show that every principally-polarized
abelian surface $(A,\lambda)$ that is isogenous to a product of two elliptic
curves arises in this way; furthermore, if such an $A$ is not isogenous to 
the square of an elliptic curve, then the $E$, $F$, $n$, and $\eta$ that give
rise to the polarized surface $(A,\lambda)$ are unique up to isomorphism and 
up to interchanging the triple $(E, F, \eta)$ with $(F, E, \eta\inv)$.

Frey and Kani also note that if the polarized surface $(A,\lambda)$
constructed in this way is the canonically-polarized Jacobian of a curve~$C$,
then there are minimal degree-$n$ maps $\alpha\colon C\to E$ and 
$\beta\colon C\to F$ such that $\alpha_*\beta^* = 0$;  here \emph{minimal} 
means that $\alpha$ and $\beta$ do not factor through nontrivial isogenies. 
Conversely, every pair of minimal degree-$n$ maps $\alpha\colon C\to E$ 
and $\beta\colon C\to F$ such that $\alpha_*\beta^* = 0$ arises in this way.

\section{Ordinary split nonisotypic surfaces}
\label{seconi}

In this section we will prove Proposition~\ref{proponi}.  The proof depends on
three lemmas, whose proofs we postpone until the end of the section.

\begin{lemma}
\label{lemboundnonisotypic}
The number $W_q$ of principally-polarized ordinary split nonisotypic
abelian surfaces over $\fq$ is at most
\[
\sum_{\cals} \sum_{\cals'\nsim \cals} 
     h(\calo_\cals)h(\calo_{\cals'})
     \relcond(\cals)\relcond(\cals') 
     \sum_{n\div(a(\cals)-a(\cals'))} \psi(n),
\]
where the first sum is over ordinary strata $\cals$, and the second is over
ordinary strata $\cals'$ not isogenous to $\cals$.
\end{lemma}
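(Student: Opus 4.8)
The plan is to count principally-polarized ordinary split nonisotypic surfaces $(A,\lambda)$ by exploiting the Frey--Kani description recalled in Section~\ref{secgluing}. Since $A$ is isogenous to a product $E_1\times E_2$ of ordinary elliptic curves in distinct isogeny classes, $A$ is in particular not isogenous to the square of an elliptic curve, so Frey and Kani's uniqueness statement applies: the surface $(A,\lambda)$ determines, up to isomorphism and up to swapping, a triple $(E,F,\eta)$ where $E$ and $F$ are elliptic curves over $\fq$ and $\eta$ is an anti-isometry $E[n]\to F[n]$ for some $n$. First I would therefore bound $W_q$ by the number of such triples: $W_q \le \sum_{E,F} \#\{n>0\} \cdot \#\isom\inv(E[n],F[n])$, where $E$ and $F$ range over isomorphism classes of ordinary elliptic curves in distinct isogeny classes. (Dropping the ``up to swapping'' quotient only helps the upper bound.)

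Next I would organize the sum over $E$ and $F$ by strata. Every ordinary $E$ lies in a unique stratum $\cals = \cali(\fq,a(\cals),\calo_\cals)$, and by the discussion in Section~\ref{secendomorphisms} the number of isomorphism classes in $\cals$ is $h(\calo_\cals)$. So $\sum_{E,F}$ becomes $\sum_\cals\sum_{\cals'\nsim\cals} h(\calo_\cals)h(\calo_{\cals'})(\cdots)$, where the inner quantity is the number of pairs $(n,\eta)$ for a fixed $E\in\cals$ and $F\in\cals'$. The remaining task is to show that for such a fixed pair,
\[
\sum_{n>0}\#\isom\nolimits\inv(E[n],F[n]) \le \relcond(\cals)\relcond(\cals')\sum_{n\div(a(\cals)-a(\cals'))}\psi(n).
\]
The key point is that an isomorphism $E[n]\to F[n]$ — in particular an anti-isometry — can only exist when $E[n]$ and $F[n]$ are isomorphic as group schemes with Frobenius action, which forces the characteristic polynomials of $\frob_E$ and $\frob_F$ to agree modulo $n$; since these polynomials are $T^2-a(\cals)T+q$ and $T^2-a(\cast')T+q$, this means $n\div(a(\cals)-a(\cals'))$. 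This is where the sum gets restricted to divisors of the trace difference. For such an $n$, I would bound $\#\isom\inv(E[n],F[n])$ by first noting it is either empty or a torsor under $\aut E[n]$ (composing an anti-isometry with an automorphism of $E[n]$), hence has size at most $\#\aut E[n]$; then I would invoke Proposition~\ref{propautsize}, which gives $\#\aut E[n]\le g^2\varphi(n)\psi(n)$ with $g=\gcd(n,\relcond(E))$. Since $n\div(a(\cals)-a(\cals'))$ and $\relcond(\cals)$ divides that trace difference is not quite what we want, but $g=\gcd(n,\relcond(\cals))\le\relcond(\cals)$, and similarly one can play the symmetric estimate using $F$; combining the two (or splitting $g^2$ as $\gcd(n,\relcond(\cals))\gcd(n,\relcond(\cals'))$ after verifying an isometry exists only when both relative-conductor conditions are compatible) yields the factor $\relcond(\cals)\relcond(\cals')$ times $\varphi(n)\psi(n)$, and $\varphi(n)\le n$ can be absorbed, or more carefully one tracks that the torsor count is $g_E g_F\varphi(n)$ times a bounded factor.

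The main obstacle I anticipate is getting the relative-conductor factors to come out as $\relcond(\cals)\relcond(\cals')$ rather than, say, $\relcond(\cals)^2$ or $\min(\relcond(\cals),\relcond(\cals'))^2$. The honest way to do this is to analyze $\isom\inv(E[n],F[n])$ directly as a set with a compatible Frobenius action on both sides, rather than crudely bounding by $\#\aut E[n]$: writing $\gamma_E = d_E I + g_E\beta_E$ and $\gamma_F = d_F I + g_F\beta_F$ as in the proof of Proposition~\ref{propautsize}, an isomorphism $E[n]\to F[n]$ intertwining the Frobenii exists iff $\gamma_E$ and $\gamma_F$ are conjugate in $\gl_2(\integ/n)$, and when it does the set of such isomorphisms is a torsor under the centralizer of $\gamma_E$; one then has to see that the centralizer's order, together with the constraint that conjugacy imposes on $d_E,d_F,g_E,g_F$, produces a bound involving $\gcd(n,\relcond(\cals))\cdot\gcd(n,\relcond(\cals'))$. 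Carrying this local (prime-power) analysis through carefully — reducing to $n=\ell^e$ by multiplicativity, handling $\ell=p$ separately where everything is rigid — is the technical heart of the argument, but it parallels the already-proven Proposition~\ref{propautsize} closely enough that I expect it to go through. I would relegate the detailed verification to the postponed proof of this lemma at the end of the section.
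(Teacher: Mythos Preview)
Your overall architecture---Frey--Kani parametrization, the congruence $n\mid(a(\cals)-a(\cals'))$, and reorganizing by strata---matches the paper. The gap is in the bound on $\#\isom\inv(E[n],F[n])$. First, $\isom\inv(E[n],F[n])$ is \emph{not} a torsor under $\aut E[n]$: composing an anti-isometry with a general automorphism of $E[n]$ lands in $\isom(E[n],F[n])$, not in the anti-isometries. Your inequality $\#\isom\inv\le\#\aut E[n]\le g^2\varphi(n)\psi(n)$ is still valid, but it carries a spurious factor of $\varphi(n)$. That factor cannot be ``absorbed'': one checks $\sum_{d\mid n}\varphi(d)\psi(d)=n^2$, so your version of Lemma~\ref{lemboundsumpsi} would give a factor of $q$ rather than $\sqrt q\,(\log\log q)^2$, and the final bound on $W_q$ would degrade from $q^{5/2}$ to $q^3$, which is useless.

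The paper removes the extra $\varphi(n)$ via Lemma~\ref{lemboundantiisometries}, and the mechanism is different from anything you sketch. Rather than fixing both $E$ and $F$, the paper fixes $E$ and sums over $E'$ in a stratum $\cals'$. Then $\isom(E,\cals',n)$ is a torsor under $\aut E[n]\times\cl(\calo_{\cals'})$, and one studies the map $m$ from this set to $(\integ/n)^\times\cong\aut\mmu_n$. The point is that the $\cl(\calo_{\cals'})$-action moves $m$ by the Artin symbols of norms from the CM field $K_{a,q}$, and by class field theory these norms fill out an index-$2$ subgroup of the id\`ele class group of $\rat$; hence the image of $m$ is a coset of index at most $2$ in $(\integ/n)^\times$, and the fiber $\isom\inv(E,\cals',n)$ has size at most $\frac{2}{\varphi(n)}\#\isom(E,\cals',n)$. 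This is where the $\varphi(n)$ cancels. Your proposed alternative, a direct centralizer-and-determinant analysis prime by prime, could in principle recover a bound of the shape $\#\isom\inv(E[n],F[n])\le 2\psi(n)g^2$ for each fixed pair, but you have not carried it out, and the case where Frobenius acts non-semisimply on $E[\ell^e]$ (so the determinant on the centralizer hits only squares) already shows the factor of $2$ is essential and the argument needs care at higher prime powers.
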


\begin{lemma}
\label{lemboundsumpsi}
We have
\[
\sum_{d\div n} \psi(d) \ll  n (\log \log n)^2 \text{\qquad for all $n>1$.}
\]
\end{lemma}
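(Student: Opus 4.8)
The plan is to recognize $\sum_{d\div n}\psi(d)$ as a multiplicative function, estimate its Euler factors one prime at a time, and package the answer as a power of $n/\varphi(n)$, so that the known bound $n/\varphi(n)\ll\log\log n$ delivers the exponent~$2$ on~$\log\log n$.

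First I would observe that, since the constant function $1$ and $\psi$ are both multiplicative, so is their Dirichlet convolution $F(n):=\sum_{d\div n}\psi(d)$; hence it suffices to understand the local factors
\[
\frac{F(n)}{n}=\prod_{\ell^e\| n}\left(\oneover{\ell^e}\sum_{j=0}^{e}\psi(\ell^j)\right).
\]
A direct summation using $\psi(\ell^j)=\ell^j+\ell^{j-1}$ gives $\sum_{j=0}^e\psi(\ell^j)=1+(\ell+1)(\ell^e-1)/(\ell-1)$, so
\[
\oneover{\ell^e}\sum_{j=0}^e\psi(\ell^j)=\frac{\ell+1}{\ell-1}-\frac{2}{\ell^e(\ell-1)}<\frac{\ell+1}{\ell-1}.
\]
Since $(\ell+1)(\ell-1)=\ell^2-1<\ell^2$ we have $\dfrac{\ell+1}{\ell-1}<\left(\dfrac{\ell}{\ell-1}\right)^2$, and multiplying over the prime divisors of $n$ yields
\[
\frac{F(n)}{n}<\prod_{\ell\div n}\left(\frac{\ell}{\ell-1}\right)^2=\left(\frac{n}{\varphi(n)}\right)^2.
\]

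Then I would invoke the standard estimate $n/\varphi(n)\ll\log\log n$ for $n\ge 3$. This can be taken as a known fact, or derived from the bound $\sigma(n)\ll n\log\log n$ already cited from \cite[Thm.~323]{HardyWright1968}, since $n/\varphi(n)=\prod_{\ell\div n}(1-\ell\inv)\inv\le\bigl(\prod_{\ell\div n}(1+\ell\inv)\bigr)^2\le(\sigma(n)/n)^2$, using $(1-x)\inv\le(1+x)^2$ for $0\le x\le 1/2$. Combining with the previous display gives $F(n)\ll n(\log\log n)^2$ for $n\ge 3$; the single remaining value $n=2$ (where $F(2)=4$) is absorbed into the implied constant, which is legitimate because $(\log\log 2)^2>0$.

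I do not expect a genuine obstacle here; the entire content is the elementary local computation. The one place that needs a little care is the bookkeeping that keeps the exponent on $\log\log n$ equal to $2$: the naive route of bounding $F(n)/n\le\prod_{\ell\div n}(1+O(1/\ell))$ and applying Mertens' theorem $\sum_{\ell\div n}1/\ell\ll\log\log n$ loses a factor and only yields exponent~$4$. Re-expressing the product of local factors as $(n/\varphi(n))^2$, as above, is exactly what keeps the estimate sharp.
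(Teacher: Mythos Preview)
Your argument is correct and is essentially the same as the paper's: both compute the local factor at $\ell$ to be at most $(\ell+1)/(\ell-1)$, bound this by $(\ell/(\ell-1))^2$, recognize the product over $\ell\mid n$ as $(n/\varphi(n))^2$, and then appeal to Landau's theorem $n/\varphi(n)\ll\log\log n$. The paper writes the same chain as
\[
\frac{f(n)}{n}\cdot\Bigl(\frac{\varphi(n)}{n}\Bigr)^2\le\prod_{\ell\mid n}\Bigl(1-\frac{1}{\ell^2}\Bigr)\le 1,
\]
which is algebraically the identical inequality.

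One small slip in your parenthetical: the alternative derivation you offer of $n/\varphi(n)\ll\log\log n$ from $\sigma(n)\ll n\log\log n$ does not quite work as stated. The inequality $(1-1/\ell)^{-1}\le(1+1/\ell)^2$ gives only $n/\varphi(n)\le(\sigma(n)/n)^2\ll(\log\log n)^2$, and feeding that into $F(n)/n<(n/\varphi(n))^2$ would land you back at the exponent~$4$ you were trying to avoid. The correct comparison is
\[
\frac{n}{\varphi(n)}=\prod_{\ell\mid n}\frac{1+1/\ell}{1-1/\ell^2}\le\zeta(2)\prod_{\ell\mid n}\Bigl(1+\frac{1}{\ell}\Bigr)\le\zeta(2)\,\frac{\sigma(n)}{n}\ll\log\log n,
\]
which recovers the Landau bound with the right exponent. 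Since you also (correctly) say the bound may simply be taken as known, this does not affect the validity of the main line; the paper likewise just cites Landau.
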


\begin{lemma}
\label{lemboundsumrelcond}
We have
\[
\sum_{\textup{ordinary }E/\fq} \relcond(E) \ll 
\begin{cases}
q (\log q)^2                  & \text{for all $q$, unconditionally,}\\
q (\log q)  \abs{\log\log q}  & \text{for all $q$, under GRH.}
\end{cases}
\]
\end{lemma}

Given these lemmas, the proof of Proposition~\ref{proponi} is straightforward.

\begin{proof}[Proof of Proposition~\textup{\ref{proponi}}]
From Lemmas~\ref{lemboundnonisotypic} and~\ref{lemboundsumpsi} we find  that
\[
W_q \ll q^{1/2} (\log \log q)^2 
        \sum_{\cals} \sum_{\cals'\nsim \cals} 
        h(\calo_\cals)h(\calo_{\cals'}) \relcond(\cals)\relcond(\cals') 
        \text{\qquad for all $q$}.
\]
Since 
\[
\sum_{\cals} \sum_{\cals'\nsim \cals} 
        h(\calo_\cals)h(\calo_{\cals'}) \relcond(\cals)\relcond(\cals') 
 < \bigg( \sum_{\cals} h(\calo_\cals) \relcond(\cals) \bigg)^2 
 = \bigg( \sum_{\textup{ordinary }E/\fq} \relcond(E) \bigg)^2,
\]
we have
\[
W_q  \ll q^{1/2} (\log \log q)^2 
        \bigg( \sum_{\textup{ordinary } E/\fq} \relcond(E)  \bigg)^2
        \text{\qquad for all $q$.}
\]
Combining this with Lemma~\ref{lemboundsumrelcond}, we find that we have
\[
W_q \ll
\begin{cases}
q^{5/2} (\log q)^4 (\log \log q)^2 & \text{for all $q$, unconditionally,}\\
q^{5/2} (\log q)^2 (\log \log q)^4 & \text{for all $q$, under GRH.}
\end{cases}\qedhere
\]
\end{proof}

Now we turn to Lemmas~\ref{lemboundnonisotypic}, \ref{lemboundsumpsi}, 
and~\ref{lemboundsumrelcond}.  The proof of Lemma~\ref{lemboundnonisotypic}
itself requires some notation and a preparatory result.

Fix an elliptic curve $E/\fq$ and a stratum $\cals$ of elliptic curves
over~$\fq$.  For a positive integer $n$, let 
\begin{align*}
\isom(E,\cals,n)     &= \st{(E,E',\eta): E' \in \cals, \eta\in \isom(E[n],E'[n])} \\
\isom\inv(E,\cals,n) &= \st{(E,E',\eta): E' \in \cals, \eta\in \isom\inv(E[n],E'[n])}.
\end{align*}

\begin{lemma}
\label{lemboundantiisometries}
Suppose that either $\cals$ is ordinary, or that $a(\cals) = 0$ and $q$ is a
nonsquare. If $\isom\inv(E,\cals,n)$ is nonempty then
$\gcd(n,\relcond(E)) = \gcd(n,\relcond(\cals))$, and we have
\[
\# \isom\inv(E,\cals,n) \le 2 \psi(n) h(\calo_\cals) 
               \gcd(n,\relcond(E)) \gcd(n, \relcond(\cals)).
\]
In particular, if $\relcond(E)\neq 0$, then
\[
\# \isom\inv(E,\cals,n) \le 2 \psi(n) h(\calo_\cals) \relcond(E) \relcond(\cals).
\]
\end{lemma}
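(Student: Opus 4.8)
The plan is to count the anti-isometries in $\isom\inv(E,\cals,n)$ by first understanding the constraint that having \emph{any} such anti-isometry places on the relative conductors, and then counting, for each $E'\in\cals$, the set $\isom\inv(E[n],E'[n])$. For the first part, recall that an anti-isometry $\eta\colon E[n]\to E'[n]$ is in particular a group scheme isomorphism, so it intertwines the action of $\frob_E$ on $E[n]$ with the action of $\frob_{E'}$ on $E'[n]$. Since both isogeny classes share the same $\calo_{a,q}$ (here $a = a(\cals)$ and also $a = a(E)$, because a group-scheme isomorphism forces $\frob_E$ and $\frob_{E'}$ to have the same characteristic polynomial mod $n$—actually we should be a little careful and note that $E$ need not lie in $\cals$, but an anti-isometry exists only if $E$ is isogenous to the curves in $\cals$, since the traces agree mod $n$ and, combined with the Hasse bound once $n$ is large enough... in fact the cleanest route is: the largest $r\mid n$ on which $\frob_E$ acts as an integer equals $\gcd(n,\relcond(E))$ by Corollary~\ref{correlcond}, and this $r$ is transported by $\eta$ to the largest divisor of $n$ on which $\frob_{E'}$ acts as an integer, which is $\gcd(n,\relcond(\cals))$; hence these two gcd's coincide). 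This gives the first assertion.

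Next I would count $\#\isom\inv(E[n],E'[n])$ for a fixed $E'\in\cals$. The set $\isom^i(E[n],E'[n])$ for any fixed $i\in(\integ/n)\units$ is either empty or a torsor under $\aut E[n]$ acting on the left (or equivalently under $\aut E'[n]$ on the right), so its cardinality is at most $\#\aut E[n]$. But more precisely, the map $m\colon\isom(E[n],E'[n])\to(\integ/n)\units$ has image contained in the squares, or at least the fibers are permuted transitively by $\aut E[n]$; the key point is that $\#\isom\inv(E[n],E'[n])\le\#\aut E[n]$. Wait—that would lose a factor of $\varphi(n)$. The better bound: since $\aut E[n]$ acts simply transitively on each nonempty fiber $\isom^i$, and there are $\varphi(n)$ possible values of $i$, we have $\#\isom(E[n],E'[n]) = (\#\text{nonempty fibers})\cdot\#\aut E[n]$; but to bound a single fiber $\isom\inv$ we just get $\#\isom\inv(E[n],E'[n])\le\#\aut E[n]$. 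Now apply Proposition~\ref{propautsize}: with $g = \gcd(n,\relcond(E))$ we have $\#\aut E[n]\le g^2\varphi(n)\psi(n)\le g^2\psi(n)^2$. Hmm, but the target bound has $\psi(n)$ once and a $\gcd(n,\relcond(E))\gcd(n,\relcond(\cals))$—and we've just shown these two gcd's are equal, so $g^2 = \gcd(n,\relcond(E))\gcd(n,\relcond(\cals))$, which matches. So I need $\#\isom\inv(E[n],E'[n])\le\psi(n)\gcd(n,\relcond(E))^2$, i.e. a saving of a factor $\varphi(n)$ over the full automorphism group. This saving should come from restricting to a single Weil-pairing fiber: an anti-isometry is determined up to an \emph{isometry} of $E[n]$ (an automorphism in $\aut^1 E[n]$, the kernel of $m$), and $\#\aut^1 E[n] = \#\aut E[n]/\varphi(n)$ when $m$ is surjective (or $\le\#\aut E[n]/\#(\text{image})$ in general—but I should check the image of $m$ restricted to $\aut E[n]$ is all of $(\integ/n)\units$, which holds because scalar matrices $d\cdot I$ with $d\in(\integ/n)\units$ are automorphisms of $E[n]$ commuting with $\frob_E$ and act on $\mmu_n$ by $d^2$... that only gives squares, so I'd instead argue via the torsor structure more carefully, or note $\#\aut^1 E[n]\le\#\aut E[n]/\varphi(n)$ suffices as an inequality if the image has size $\ge$ something; actually the honest statement is that the fibers of $m|_{\aut E[n]}$ all have the same size $\#\aut^1E[n]$, and there are at least... this needs the squares at minimum, giving fibers of size $\le 2\#\aut E[n]/\varphi(n)$, which is exactly where the factor of $2$ in the statement comes from). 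Then summing over the $E'\in\cals$, of which there are $h(\calo_\cals)$ (using that $\cals$ is ordinary or supersingular-with-$a=0$-and-$q$-nonsquare, so the stratum size is the class number), yields
\[
\#\isom\inv(E,\cals,n)\le h(\calo_\cals)\cdot 2\,\frac{\#\aut E[n]}{\varphi(n)}\le 2h(\calo_\cals)\psi(n)g^2,
\]
and rewriting $g^2 = \gcd(n,\relcond(E))\gcd(n,\relcond(\cals))$ gives the main inequality. The final ``in particular'' is immediate: $\gcd(n,\relcond(E))\le\relcond(E)$ and likewise for $\cals$, valid once $\relcond(E)\neq 0$ so that these are genuine positive integers.

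The main obstacle is pinning down the factor of $2$ and the exact relationship between $\#\aut^1E[n]$ and $\#\aut E[n]$: one must show that the Weil-pairing multiplier map $m$ restricted to the centralizer of $\frob_E$ in $\gl_2(\integ/n)$ has image of index dividing $2\varphi(n)/\varphi(n)$—more precisely that its image contains the squares $((\integ/n)\units)^2$, whose index in $(\integ/n)\units$ is bounded by $2$ when $n$ is... no, that index is not bounded by $2$ in general. So the factor $2$ must come from somewhere else—likely from the two-to-one nature of something, or from the statement being slightly lossy. I would look back at how this lemma is \emph{used}: it feeds into Lemma~\ref{lemboundnonisotypic}, and there a crude constant is harmless. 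So I would be content to prove $\#\aut^1 E[n]\le\#\aut E[n]/\varphi(n)$ exactly (this holds whenever $m|_{\aut E[n]}$ is surjective onto $(\integ/n)\units$, and I'd verify surjectivity by exhibiting enough automorphisms—e.g. on the ordinary part $E[n]\iso G_1\times G_2$ with $G_1$ étale and $G_2$ its Cartier dual, the automorphism acting as $1$ on $G_1$ and $u$ on $G_2$ has Weil multiplier $u$, giving all of $(\integ/n)\units$), absorb the factor $2$ as slack, and reserve the remaining care for the prime-power reduction via multiplicativity, handling the $\ell = p$ case (where $g = 1$ and $E[n] = G_1\times G_2$ as above) separately from $\ell\neq p$ (centralizer-in-$\gl_2$ computation already done in Proposition~\ref{propautsize}).
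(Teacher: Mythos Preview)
Your argument for the equality $\gcd(n,\relcond(E)) = \gcd(n,\relcond(\cals))$ via Corollary~\ref{correlcond} is correct and matches the paper. (Your parenthetical remark that an anti-isometry forces $E$ to be isogenous to the curves in $\cals$ is false---in the main application, Lemma~\ref{lemboundnonisotypic}, they are explicitly \emph{not} isogenous---but you recover and give the right argument anyway.)

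The genuine gap is in the counting step. Your strategy is to bound $\#\isom\inv(E[n],E'[n])$ for each fixed $E'\in\cals$ by $\#\aut^1 E[n]$ and then multiply by $\#\cals = h(\calo_\cals)$. For this to yield the stated bound you need $\#\aut^1 E[n]\le 2\psi(n)g^2$, equivalently that $m|_{\aut E[n]}\colon\aut E[n]\to(\integ/n)\units$ has image of index at most~$2$. But this fails. If $\ell\mid n$ is a prime with $a(E)^2\equiv 4q\pmod\ell$ and $\ell\nmid\relcond(E)$, then Frobenius acts on $E[\ell]$ as a nonscalar with a single eigenvalue, its centralizer consists of matrices $\bigl(\begin{smallmatrix}a&b\\0&a\end{smallmatrix}\bigr)$, and the determinant image is only the squares in $(\integ/\ell)\units$. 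With $k$ such prime factors the index becomes $2^k$, not~$2$; concretely, for $n=15$ with both $3$ and $5$ of this type one gets $\#\aut^1 E[15]=60>48=2\psi(15)$. Your proposed fix---the automorphism acting as $1$ on an \'etale factor and as $u$ on its Cartier dual---only applies to the $p$-primary part of $E[n]$, where there is no difficulty anyway.

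The paper closes this gap by a global argument that your per-$E'$ approach cannot see. It treats the full set $\isom(E,\cals,n)$ as a torsor for $\aut E[n]\times\cl(\calo_\cals)$ and studies $m$ on the whole set at once. The key input is that the class-group action on $E'$ changes the Weil multiplier by the Artin symbol $(\norm_{K/\rat}(\ida),\rat(\zeta_n)/\rat)$; since norms from $K$ have index $[K:\rat]=2$ in the id\`ele classes of~$\rat$, the image of $m$ on $\isom(E,\cals,n)$ is a coset of a subgroup of index at most~$2$ in $(\integ/n)\units$. This gives $\#\isom\inv(E,\cals,n)\le(2/\varphi(n))\,\#\isom(E,\cals,n)$ directly. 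The factor~$2$ is thus the class-field-theoretic $[K:\rat]$, not something recoverable from $\aut E[n]$ alone.
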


\begin{proof}
Suppose that $\isom\inv(E,\cals,n)$ is nonempty.  Then there is an $E'\in\cals$
for which there is an isomorphism $E[n]\cong E'[n]$. Corollary~\ref{correlcond}
then shows that 
$\gcd(n,\relcond(E)) = \gcd(n,\relcond(E')) = \gcd(n,\relcond(\cals))$.

The class group $\cl(\calo_\cals)$ acts on $\cals$, and the assumption that 
either $\cals$ is ordinary or that $a(\cals) = 0$ and $q$ is a nonsquare 
implies that $\cals$ is a torsor for the class group. Define an action of 
$\aut E[n] \times \cl(\calo_\cals)$ on the nonempty set $\isom(E,\cals,n)$ by
setting
\[
(\alpha, [\ida])\comp (E,E',\eta) 
  = (E, [\ida]* E', [\ida]\comp \eta \comp \alpha\inv).
\]
It is clear that $\isom(E,\cals,n)$ is a torsor for 
$\aut E[n] \times \cl(\calo_\cals)$ under this action, so using 
Proposition~\ref{propautsize} we find that
\[
\# \isom(E,\cals,n) \le \big(\#\aut E[n]\big) \, h(\calo_\cals)
                    \le g^2 \varphi(n)\psi(n) h(\calo_\cals),
\]
where $g = \gcd(n,\relcond(E)).$ Therefore
\[
\#\isom(E,\cals,n)
  \le \varphi(n)\psi(n) h(\calo_\cals) \gcd(n,\relcond(E)) \gcd(n,\relcond(\cals)).
\]

In the preceding section we defined a map 
$m\colon\isom(E[n],E'[n])\to \aut\mmu_n$ that sends a group scheme isomorphism 
to the automorphism of $\mmu_n$ induced by the Weil pairing.  This gives rise 
to a map from $\isom(E,\cals,n)$ to $\aut\mmu_n$, which we continue to denote
by~$m$, that sends a triple $(E,E',\eta)$ to~$m(\eta)$. We claim that the image
of this map is a coset of a subgroup of $\aut \mmu_n$ of index at most~$2$.

To see this, we use the theory of complex multiplication, the
Galois-equivariance of the Weil pairing, and class field theory for the
extension $\rat(\zeta_n)/\rat$ as follows. Let $K$ be the field of fractions
of $\calo_\cals$.  Given $[\ida] \in \cl(\calo_\cals)$ and 
$(E,E',\eta) \in \isom(E,\cals,n)$, we have
\[
m( (1, [\ida])\comp (E,E',\eta)) =
(\norm_{K/\rat}(\ida),\rat(\zeta_n)/\rat) \comp m(\eta) \in \aut \mmu_n,
\]
where $(\ \cdot\ ,\rat(\zeta_n)/\rat)$ denotes the Artin symbol for the 
extension $\rat(\zeta_n)/\rat$. Since the group of norms of id\`ele classes of 
$K$ has index $[K:\rat] = 2$ in the group of id\`ele classes of~$\rat$, the 
image of the map $m$ is a coset of a subgroup of index at most~$2$.

Therefore, the number of elements in $\isom\inv(E,\cals,n)$ is at most 
$2/\varphi(n)$ times the number of elements in $\isom(E,\cals,n)$, and we 
obtain the inequality in the lemma.
\end{proof}

\begin{proof}[Proof of Lemma~\textup{\ref{lemboundnonisotypic}}]
As we noted in Section~\ref{secgluing}, every principally-polarized ordinary 
split nonisotypic surface over $\fq$ is obtained in exactly two ways by gluing 
two ordinary nonisogenous curves $E$ and $E'$ together along their $n$-torsion.
Since we must then have $E[n]\cong E'[n]$, the traces of Frobenius of $E$ and
$E'$ must be congruent to one another modulo $n$; that is, 
$n \div (a(E) - a(E'))$. Summing over ordinary $E$ and $E'$, we find that
\begin{alignat*}{2}
2W_q &=     \sum_{E}     \sum_{E'    \nsim E} \quad \sum_{n\div (a(E) - a(E'))    } \#\isom\inv(E[n],E'[n])\\
     &=     \sum_{E}     \sum_{\cals'\nsim E} \quad \sum_{n\div (a(E) - a(\cals'))} \#\isom\inv(E,\cals',n)\\
     &\le   \sum_{E}     \sum_{\cals'\nsim E} \quad \sum_{n\div (a(E) - a(\cals'))} 2\psi(n) h(\calo_{\cals'})\relcond(E)\relcond(\cals') & \text{\qquad (by Lemma~\ref{lemboundantiisometries})}\\
     &\le 2 \sum_{E}     \sum_{\cals'\nsim E}                   h(\calo_{\cals'})\relcond(E)\relcond(\cals') \sum_{n\div (a(E) - a(\cals'))} \psi(n)\\
     &=   2 \sum_{\cals} \sum_{\cals'\nsim \cals} h(\calo_\cals)h(\calo_{\cals'})\relcond(\cals)\relcond(\cals') \sum_{n\div (a(\cals) - a(\cals'))} \psi(n),\\
\end{alignat*}
which proves the lemma.
\end{proof}

\begin{proof}[Proof of Lemma~\textup{\ref{lemboundsumpsi}}]
Denote the sum on the left by $f(n)$, so that $f$ is a multiplicative function.
We calculate that 
$f(n)/n \le \prod_{\ell\div n} (1+\frac{1}{\ell})/(1-\frac{1}{\ell}).$
Taking this inequality and multiplying by the square of the identity 
$\varphi(n)/n = \prod_{\ell\div n} (1-\frac{1}{\ell})$, we find that
\[
\frac{f(n)}{n(\log\log n)^2}\left(\frac{\varphi(n)\log\log n}{n}\right)^2 \le
\prod_{\ell\div n}\left(1 - \frac{1}{\ell^2}\right) \le 1.
\]
Landau~\cite{Landau1903a} showed that 
$\liminf \varphi(n)(\log\log n)/n = e^{-\gamma}$, where $\gamma$ is Euler's 
constant.  The lemma follows.
\end{proof}

Our proof of Lemma~\ref{lemboundsumrelcond} requires an estimate from
analytic number theory.  Let $C$ be the multiplicative arithmetic function
defined on prime powers $\ell^e$ by $C(\ell^e) = 2(1 + 1/\ell)$.

\begin{lemma}
\label{lemCbound}
We have
\[
\sum_{n\le x} C(n) \ll x \log x \text{\qquad for all $x > 1$.}
\]
\end{lemma}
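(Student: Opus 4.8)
The plan is to estimate the summatory function of $C$ by comparing it to a Dirichlet series and using a standard convolution argument. The key observation is that $C$ factors through a simple convolution identity. Since $C$ is multiplicative with $C(\ell^e) = 2(1+1/\ell)$ independent of $e\ge 1$, I would first compute its Dirichlet series formally: for $\Re s > 1$,
\[
\sum_{n\ge 1}\frac{C(n)}{n^s} = \prod_\ell\Bigl(1 + 2\bigl(1+\tfrac1\ell\bigr)\sum_{e\ge 1}\ell^{-es}\Bigr)
= \prod_\ell\Bigl(1 + \frac{2(1+1/\ell)\ell^{-s}}{1-\ell^{-s}}\Bigr).
\]
Clearing denominators inside each Euler factor, one sees this equals $\zeta(s)^2$ times a factor whose Euler product converges absolutely for $\Re s > 1/2$; concretely, $\sum_n C(n)n^{-s} = \zeta(s)^2 G(s)$ where $G(s) = \prod_\ell\bigl(1 + O(\ell^{-2\Re s}) + O(\ell^{-1-\Re s})\bigr)$ converges for $\Re s>1/2$. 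Equivalently, in the language of Dirichlet convolution, $C = \mathbf{1}*\mathbf{1}*g = \tau * g$ where $g$ is a multiplicative function with $\sum_n |g(n)|/n^\sigma < \infty$ for $\sigma$ slightly above $1/2$, and in particular $\sum_n |g(n)|/n < \infty$.

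Given this factorization, the second step is routine: write $\sum_{n\le x}C(n) = \sum_{d\le x} g(d)\sum_{m\le x/d}\tau(m)$, and invoke the classical Dirichlet divisor estimate $\sum_{m\le y}\tau(m) \ll y\log y$ (valid for $y>1$; for $y\le 1$ the inner sum is $O(1)$). This gives
\[
\sum_{n\le x}C(n) \ll \sum_{d\le x}|g(d)|\,\frac{x}{d}\log\frac{x}{d} + O\Bigl(\sum_{d\le x}|g(d)|\Bigr)
\ll x\log x\sum_{d\ge 1}\frac{|g(d)|}{d} + O(x) \ll x\log x,
\]
using the absolute convergence of $\sum |g(d)|/d$. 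The bound for all $x>1$ (not merely large $x$) follows since the implied constants in the divisor estimate and in $\sum|g(d)|/d$ are absolute.

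The only mildly delicate point — and the place I would spend the most care — is verifying that the "error" function $g$ in the factorization $C = \tau * g$ really does have $\sum_d |g(d)|/d$ convergent. This amounts to checking that $g(\ell^e)$ is $O(\ell^{-1})$ for $e=1$ and $O(\ell^{-1})$ (indeed much smaller) for $e\ge 2$, which one reads off by expanding $\bigl(1+\tfrac{2(1+1/\ell)\ell^{-s}}{1-\ell^{-s}}\bigr)(1-\ell^{-s})^2$ and subtracting $1$: the coefficient of $\ell^{-s}$ is $2(1+1/\ell) - 2 = 2/\ell$, and higher coefficients are bounded similarly. Since $\sum_\ell 1/\ell^{1+\epsilon}$ and $\sum_\ell (\log\ell)/\ell^{2}$-type tails converge, multiplicativity of $g$ gives the needed absolute convergence of $\sum_d|g(d)|/d$, completing the argument. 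Alternatively, one can bypass Dirichlet series entirely: note $C(n) \le 2^{\omega(n)}\sigma(n)/n \cdot \gcd$-type bounds, but the convolution route is cleanest.
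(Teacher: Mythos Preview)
Your argument is correct, modulo one small slip: when you compute the Euler factor of $G(s) = \sum_n g(n) n^{-s}$, you find $g(\ell) = 2/\ell$ correctly, but then claim $g(\ell^e)$ is ``$O(\ell^{-1})$ (indeed much smaller)'' for $e\ge 2$. In fact $g(\ell^2) = -1 - 2/\ell$, which has size about $1$, not $O(\ell^{-1})$; for $e\ge 3$ one does get $g(\ell^e) = 0$. This does not damage the proof: what you actually need is the convergence of $\sum_d |g(d)|/d$, and the Euler factor at $\ell$ of that sum is $1 + 2/\ell^2 + (1+2/\ell)/\ell^2 = 1 + O(\ell^{-2})$, which gives a convergent product.

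Your route differs from the paper's. The paper uses the simpler decomposition $C = \mathbf 1 * D$ with $D = C*\mu$, so that $D$ is supported on squarefree integers with $D(\ell) = 1 + 2/\ell$; then $\sum_{n\le x} C(n) \le x \sum_{d\le x} D(d)/d$, and the latter sum is bounded by the truncated Euler product $\prod_{\ell\le x}(1 + 1/\ell + 2/\ell^2)$, which Mertens' theorem shows is $\ll \log x$. In effect, the paper peels off one factor of $\zeta(s)$ and handles the remaining factor by an explicit Euler-product estimate, while you peel off both factors of $\zeta(s)$ and invoke the Dirichlet divisor bound together with absolute convergence of the residual series. Your approach is the standard ``extract the polar part'' technique from analytic number theory and generalizes more readily; the paper's is slightly more elementary, needing only Mertens rather than the divisor estimate.
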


\begin{proof}
Let $D$ be the Dirichlet product (\cite[\S2.6]{Apostol1976}) of $C$ with the 
M\"obius function $\mu$, so that
\[
C(n) = \sum_{d\div n} D(d).
\]
We compute that $D$ is the multiplicative function defined on prime powers 
$\ell^e$ by
\[
D(\ell^e) = \begin{cases}
            1 + 2/\ell    & \text{if $e = 1$,}\\
            0          & \text{if $e > 1$.}\\
            \end{cases}
\]
Then
\[
\sum_{n\le x}C(n) 
     =   \sum_{n\le x} \sum_{d\div n} D(d)
     =   \sum_{d\le x} D(d) \bigfloor{\frac{x}{d}}
     \le x \sum_{d\le x}\frac{D(d)}{d},
\]
so we need only show that $\sum_{d\le x} D(d)/d \ll \log x $ for $x > 1$.

Note that 
\[
\sum_{i = 0}^\infty \frac{D(\ell^i)}{\ell^i} = 1 + \oneover{\ell} + \frac{2}{\ell^2},
\]  
so that
\[\sum_{d\le x}\frac{D(d)}{d} \le 
  \prod_{\ell \le x} \left( 1 + \oneover{\ell} + \frac{2}{\ell^2}\right).
\]
Taking logarithms, we find that 
\begin{align*}
\log\sum_{d\le x}\frac{D(d)}{d}
          &\le \sum_{\ell\le x} \log\left(1 + \oneover{\ell} + \frac{2}{\ell^2}\right) \\
          &=\sum_{\ell\le x} \oneover{\ell} + c + O\left(\oneover{x}\right)\\
          &=\log\log x + c' + O\left(\oneover{\log x}\right),
\end{align*}
where $c$ and $c'$ are constants and where the last equality comes 
from~\cite[Thm.~4.12, p.~90]{Apostol1976}.  Exponentiating, we find that
$\sum_{d\le x} D(d)/d \ll \log x$ for $x\ge 2$, as desired.
\end{proof}

\begin{proof}[Proof of Lemma~\textup{\ref{lemboundsumrelcond}}]
First we compute a bound on the sum of the relative conductors of the elliptic
curves in a fixed ordinary isogeny class.  Let $a$ be an integer, coprime 
to~$q$, with $a^2 < 4q$.  Recall from Section~\ref{secendomorphisms} that we 
write $\Delta_{a,q} := a^2 - 4q = \cond_{a,q}^2 \Delta_{a,q}^*$, where 
$\Delta_{a,q}^*$ is a fundamental discriminant.  Let $\tilde\calo_{a,q}$
be the quadratic order of discriminant $\Delta_{a,q}^*$.  As we noted in 
Section~\ref{secendomorphisms}, the isogeny class $\cali(\fq,a)$ is the
union of strata $\cals = \cali(\fq,a,\calo)$, where the orders 
$\calo\subseteq\tilde\calo_{a,q}$ have discriminant $f^2 \Delta_{a,q}^*$ for the 
divisors $f$ of $\cond_{a,q}$.  The curves in $\cals$ have relative conductor 
$\cond_{a,q}/f$, and the number of curves in $\cals$ is equal to $h(\calo)$.
If we let $\chi$ denote the quadratic character modulo $\Delta_{a,q}^*$, then
\[
h(\calo) = f h(\Delta^*_{a,q}) \prod_{\ell\div f} \left(1 - \frac{\chi(\ell)}{\ell}\right).
\]
Thus,
\[
\sum_{E\in\cali(\fq,a)}\relcond(E)
= 
\sum_{f\div \cond_{a,q}} \frac{\cond_{a,q}}{f} f h(\Delta^*_{a,q}) 
   \prod_{\ell\div f} \left(1 - \frac{\chi(\ell)}{\ell}\right)
= \cond_{a,q}  h(\Delta^*_{a,q}) \sum_{f\div \cond_{a,q}}
   \prod_{\ell\div f} \left(1 - \frac{\chi(\ell)}{\ell}\right).
\]
Lemma~\ref{lemclassnumber} tells us that 
$h(\Delta) \ll \abs{\Delta}^{1/2} \log\abs{\Delta}$ for all fundamental
discriminants $\Delta<0$.  Combining this with the fact that 
$\abs{f_{a,q}^2\Delta_{a,q}^*} = 4q - a^2 < 4q$ we see that there is a
constant $c$ such that for all $q$ and $a$, we have
\[
\sum_{E\in\cali(\fq,a)}\relcond(E) < c q^{1/2} (\log q) A(\cond_{a,q}),
\]
where $A$ is the arithmetic function defined by
\[
A(n) = \sum_{d\div n} \prod_{\ell\div d} \left(1 + \frac{1}{\ell}\right)
     = \sum_{d\div n} \frac{\psi(d)}{d}.
\]
Additionally, if the generalized Riemann hypothesis is true we can use 
Lemma~\ref{lemclassnumber} to find that there is a constant $c'$ such that for
all $q$ and $a$ we have
\[
\sum_{E\in\cali(\fq,a)}\relcond(E) < c' q^{1/2} \abs{\log \log q} A(\cond_{a,q}).
\]
Thus, to prove the lemma it will suffice to show that we have
\begin{equation}
\label{eqsuffice}
\sum_{\substack{1\le a \le {2\sqrt{q}} \\ \gcd(a,q)=1}}
      A(\cond_{a,q}) \ll q^{1/2} \log q
      \text{\qquad for all $q$.}
\end{equation}

Note that the sum on the left side of~\eqref{eqsuffice} is equal to
\[
\sum_{\substack{1\le a \le {2\sqrt{q}} \\ \gcd(a,q)=1}}
     \quad \sum_{d\div \cond_{a,q}} \frac{\psi(d)}{d} = 
\sum_{1\le d\le 2\sqrt{q}} \frac{\psi(d)}{d} 
     \# \st{a \colon 1\le a \le 2\sqrt{q} \text{ and } \gcd(a,q)=1
                                          \text{ and } d\div \cond_{a,q} }.
\]
If $d\div \cond_{a,q}$ then $a^2 \equiv 4q \bmod d^2$, so let us first 
consider, for a fixed $d$, estimates for the number of $a$ in the interval 
$[1, 2\sqrt{q}]$ with $a^2 \equiv 4q \bmod d^2$.

We have
\begin{align*}
\# \st{a \colon 1\le a \le 2\sqrt{q} \text{ and } a^2 \equiv 4q \bmod d^2 }
 &\le \# \st{a \colon 1\le a \le d^2 \ceil{2\sqrt{q} / d^2} \text{ and } a^2 \equiv 4q \bmod d^2} \\
 &= \ceil{2\sqrt{q} / d^2} \# \st{a \colon 1\le a \le d^2 \text{ and } a^2 \equiv 4q \bmod d^2}.
\end{align*}
Thus, if we let $B_q$ denote the multiplicative arithmetic function given by
\[
B_q(n) = \# \st{ a \colon 1\le a \le n^2 \text{ and } a^2 \equiv 4q \bmod n^2 }
\]
then we have
\begin{align}
\notag \sum_{\substack{1\le a \le 2\sqrt{q} \\ \gcd(a,q)=1}}  A(\cond_{a,q}) 
                &\le \sum_{\substack{d \le 2\sqrt{q} \\ \gcd(d,q)=1}} 
                           \frac{\psi(d)}{d} 
                           \# \st{ a \colon 1\le a \le 2\sqrt{q} \text{ and } \gcd(a,q)=1 \text{ and } d\div \cond_{a,q} }\\
\notag          &\le \sum_{\substack{d \le 2\sqrt{q} \\ \gcd(d,q)=1}}
                           \frac{\psi(d)}{d} \ceil{2\sqrt{q} / d^2} B_q(d)\\
\label{eqABsum} 
                &\le \sum_{\substack{d \le 2\sqrt{q} \\ \gcd(d,q)=1}}
                           \frac{ 2\sqrt{q}} {d^2} \frac{\psi(d)}{d} B_q(d) + 
                     \sum_{\substack{d \le 2\sqrt{q} \\ \gcd(d,q)=1}}
                           \frac{\psi(d)}{d} B_q(d).
\end{align}

If $\ell$ is a prime that does not divide $q$ and if $e>0$ then
\[
B_q(\ell^e) \le \begin{cases}
              2 & \text{if $\ell\ne 2$}\\
              8 & \text{if $\ell = 2$},
            \end{cases}
\]
so 
\[
\frac{\psi(d)}{d} B_q(d) \le 4 C(d)
\]
for all $d$ coprime to~$q$, where $C$ is the function from
Lemma~\ref{lemCbound}. For every $\epsilon>0$ we have $C(d) \ll d^\epsilon$
for all~$d$, so
\begin{equation}
\label{eqABbound1}
\sum_{\substack{d \le 2\sqrt{q} \\ \gcd(d,q)=1}}
                           \oneover{d^2} \frac{\psi(d)}{d} B_q(d) 
\le 4 \sum_{\substack{d \le 2\sqrt{q} \\ \gcd(d,q)=1}}
                           \frac{C(d)}{d^2}
\le 4 \sum_{d=1}^\infty \frac{C(d)}{d^2}                 
< \infty   \text{\qquad for all $q$}.                       
\end{equation}
This shows that the first term on the right side of~\eqref{eqABsum} is 
$\ll \sqrt{q}$ for all~$q$.  To bound the second term on the right side 
of~\eqref{eqABsum}, we compute that 
\begin{equation}
\label{eqABbound2}
\sum_{\substack{d \le 2\sqrt{q} \\ \gcd(d,q)=1}}  \frac{\psi(d)}{d} B_q(d)
 \le 4 \sum_{d\le 2\sqrt{q} } C(d) \ll q^{1/2} \log q
\text{\qquad for all $q$},
\end{equation}
by Lemma~\ref{lemCbound}. Combining~\eqref{eqABsum} with~\eqref{eqABbound1}
and~\eqref{eqABbound2} proves~\eqref{eqsuffice}, and completes the proof of 
the lemma.
\end{proof}

\section{Ordinary split isotypic surfaces}
\label{secoi}

In this section we will prove Proposition~\ref{propoi}. As in the 
preceding section, we state several lemmas which lead to a quick 
proof of the proposition. Lemma~\ref{lemsumisogenousrelcond} follows 
from Lemma~\ref{lemboundantiisometries}.  We postpone the proofs of
Lemmas~\ref{lemmapdegree}, \ref{lemboundpsisum},  
and~\ref{lemsmallestisogenies} until the end of the section.

\begin{lemma}
\label{lemsumisogenousrelcond}
For every ordinary $E / \fq$ and positive integer $n$ we have
\[
\pushQED{\qed}
\sum_{E'\sim E} \#\isom\inv(E,E',n)  \le 2 \psi(n) \relcond(E)
\sum_{E'\sim E} \relcond(E').\qedhere
\popQED
\]
\end{lemma}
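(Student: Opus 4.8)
The plan is to reduce the claim to Lemma~\ref{lemboundantiisometries}, applied term-by-term as $E'$ ranges over the isogeny class of $E$. The point is that all curves $E' \sim E$ lie in a single ordinary isogeny class $\cali(\fq, a)$ with $a = a(E)$, which decomposes as a disjoint union of strata $\cals'$. First I would rewrite the left-hand sum as a double sum: group the curves $E' \sim E$ according to which stratum $\cals'$ they belong to, so that
\[
\sum_{E'\sim E} \#\isom\inv(E,E',n)
= \sum_{\cals'\sim \cals} \ \sum_{E'\in\cals'} \#\isom\inv(E[n],E'[n]),
\]
and observe that the inner sum is exactly $\#\isom\inv(E,\cals',n)$ by the definition of that set given just before Lemma~\ref{lemboundantiisometries}.

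Next I would apply Lemma~\ref{lemboundantiisometries} to each stratum $\cals'$. Since $\cals$ is ordinary (being isogenous to the ordinary curve $E$), so is each $\cals'$, and the hypothesis of that lemma is met. If $\isom\inv(E,\cals',n)$ is empty the corresponding term contributes nothing, so we may restrict to the nonempty case, where the lemma gives
\[
\#\isom\inv(E,\cals',n) \le 2\psi(n)\, h(\calo_{\cals'})\,\gcd(n,\relcond(E))\,\gcd(n,\relcond(\cals'))
\le 2\psi(n)\,h(\calo_{\cals'})\,\relcond(E)\,\relcond(\cals').
\]
Summing over the strata $\cals'\sim\cals$ and pulling the factor $2\psi(n)\relcond(E)$ (which does not depend on $\cals'$) out front yields
\[
\sum_{E'\sim E} \#\isom\inv(E,E',n)
\le 2\psi(n)\,\relcond(E) \sum_{\cals'\sim\cals} h(\calo_{\cals'})\,\relcond(\cals').
\]

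Finally I would rewrite the stratum sum back in terms of curves. Since $\#\cals' = h(\calo_{\cals'})$ for strata in an ordinary isogeny class (as recalled in Section~\ref{secendomorphisms}) and $\relcond$ is constant on each stratum with value $\relcond(\cals')$, we have $h(\calo_{\cals'})\relcond(\cals') = \sum_{E'\in\cals'}\relcond(E')$, and hence $\sum_{\cals'\sim\cals} h(\calo_{\cals'})\relcond(\cals') = \sum_{E'\sim E}\relcond(E')$. This gives the stated inequality.

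The only mild subtlety — and the place I would be careful — is the passage where terms with $\isom\inv(E,\cals',n)$ empty are dropped: this is harmless since those terms are zero, but one should note that the final bound still holds because the right-hand side is a sum of nonnegative quantities. Everything else is bookkeeping, since the real content has already been isolated in Lemma~\ref{lemboundantiisometries} and in the torsor structure of strata under their class groups.
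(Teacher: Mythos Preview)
Your proof is correct and follows precisely the approach the paper intends: the paper simply records that the lemma follows from Lemma~\ref{lemboundantiisometries}, and your argument spells out exactly that deduction---grouping the $E'$ by strata, applying Lemma~\ref{lemboundantiisometries} stratum by stratum, and converting back via $\#\cals' = h(\calo_{\cals'})$. The only cosmetic point is that you use the symbol $\cals$ for the stratum of $E$ without saying so explicitly, but the meaning is clear.
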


\begin{lemma}
\label{lemmapdegree}
Let $E/\fq$ be an elliptic curve and let $C/\fq$ be a smooth genus-$2$ curve
with $\jac C \sim E^2$. Then there is a finite morphism $C \to E$ of degree at
most~$\sqrt{2q}$. If $E$ is supersingular with all endomorphisms defined 
over~$\fq$, then there is a finite morphism $C \to E$ of degree at 
most~$q^{1/4}$.
\end{lemma}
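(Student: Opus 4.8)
The plan is to exploit the correspondence from Section~\ref{secgluing}: if $\jac C \sim E^2$, then $(\jac C, \lambda_C)$ arises by gluing $E$ to itself along $n$-torsion for some $n$, and Frey--Kani provide minimal degree-$n$ maps $\alpha, \beta \colon C \to E$ with $\alpha_*\beta^* = 0$. Thus it suffices to bound the minimal $n$ that can occur. The key is that a degree-$n$ map $C \to E$ gives an embedding of the relevant piece of $\End(\jac C)$, or more usefully gives us a bound via the polarized structure: the gluing subgroup $G \subset (E\times E)[n]$ is the graph of an anti-isometry, and $\jac C = (E\times E)/G$ carries a principal polarization which is $n$ times the pullback of the product polarization. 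First I would recall that $\jac C \sim E^2$ forces $\End^0(\jac C)$ to be a $2\times 2$ matrix algebra over $K := \End^0(E)$, and that the principal polarization corresponds to a positive-definite Hermitian form of determinant related to the degrees of the isogenies involved; the degree of the minimal map $C \to E$ is controlled by the discriminant of the lattice $\Hom(E, \jac C)$ inside this Hermitian space.

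More concretely, the second step is a lattice-theoretic estimate. The glued surface $A = (E\times E)/G$ has the property that the pullback of its principal polarization to $E \times E$ is $n$ times the product polarization, so $n^2 = \deg(\text{isogeny } E\times E \to A)\cdot(\text{something})$ — equivalently, $\#G = n^2$ and the covolume computation in the theory of Frey--Kani forces $n$ to satisfy $n \mid$ (a bounded quantity). The cleanest route: the curve $C$ together with $\alpha$ gives a correspondence, and by Riemann--Hurwitz or by intersection theory on $C \times E$, the two maps $\alpha$ and $\beta$ of degree $n$ with $\alpha_*\beta^* = 0$ sit inside $\Hom(\jac C, E) \cong$ a rank-2 $\calo$-lattice $L$ (where $\calo = \End E$) equipped with the positive-definite form coming from $\lambda_C$; the principal polarization forces this form to have small discriminant, and Minkowski's theorem on successive minima then gives a vector of norm $\ll \sqrt{q}$, i.e.\ a map $C \to E$ of degree $\ll \sqrt{q}$. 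Optimizing the constant — tracking that the product of the two successive minima is the discriminant, which is $q$ up to bounded factors coming from $\Delta(\calo)$ — should yield the bound $\sqrt{2q}$.

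For the supersingular case with all endomorphisms defined over $\fq$, the endomorphism ring $\calo = \End E$ is a maximal order in the quaternion algebra $\rat_{p,\infty}$, so $\Hom(\jac C, E)$ is a rank-one right $\calo$-module, i.e.\ a rank-one lattice rather than rank two, and the positive-definite norm form on it is the reduced norm scaled by the discriminant; the single successive minimum of a rank-one lattice of covolume $\asymp q$ (in an appropriate normalization) is $\asymp q^{1/4}$ after taking into account that the quaternionic norm form is a quaternary quadratic form of discriminant governed by $q^2$, so a nonzero vector has norm $\ll q^{1/4}$, giving the improved degree bound. The main obstacle I expect is getting the constants exactly right: one must carefully identify the Hermitian (resp.\ quaternionic) lattice, compute its discriminant in terms of $q$ using that $\jac C$ is \emph{principally} polarized (this is where the factor improving $q^{1/2}$ to $q^{1/4}$ in the supersingular case comes from — the lattice has smaller rank), and then apply the sharp form of Minkowski's second theorem rather than a crude first-minimum bound. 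A clean alternative that avoids some of this bookkeeping is to argue directly with the Frey--Kani parametrization: $C$ being obtained by gluing $E$ to itself along $\eta \in \isom^{-1}(E[n], E[n])$ means $n$ is constrained by the requirement that such an anti-isometry descends to give a \emph{smooth} genus-2 curve, and one counts/bounds $n$ by noting that the induced principal polarization has degree $n^2$ relative to a polarization of degree $\ll q$ on $E \times E$, forcing $n^2 \ll q$ (resp.\ $n^4 \ll q$ when the rigidity of the quaternionic structure halves the effective rank), which is exactly the asserted inequality.
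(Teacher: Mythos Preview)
Your overall strategy---view the nonconstant maps $C\to E$ (modulo translation) as a lattice equipped with the positive-definite degree form, then apply geometry of numbers---is correct and is essentially what the paper does. The paper phrases this as the Mordell--Weil lattice of $E$ over the function field of~$C$, but that is the same lattice, and the height pairing there is twice the degree. So the framework matches; the gaps are in the two places where you have to supply actual numbers.

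First, you never compute the discriminant. You write that ``the principal polarization forces this form to have small discriminant'' and that it is ``$q$ up to bounded factors,'' but you give no mechanism for this, and your Frey--Kani bookkeeping does not furnish one: knowing that $\jac C$ is obtained as $(E\times E')/G$ tells you the degree of one particular isogeny, not the covolume of the full lattice $\Hom(\jac C,E)$. The paper's input here is genuinely nontrivial: Milne's proof of the Birch--Swinnerton-Dyer conjecture for constant elliptic curves over function fields shows that the determinant of the Mordell--Weil lattice divides $(a^2-4q)^2$ when $\pi\ne\pibar$, and divides $q^2$ when $\pi=\pibar$. One then applies the Hermite constant $\gamma_4=\sqrt 2$ (respectively $\gamma_8=2$) to get a nonzero vector of norm at most $\sqrt 2\,\abs{a^2-4q}^{1/2}$ (respectively $2q^{1/4}$); since the norm is $2\cdot\deg$, this gives the bounds $\sqrt{2q}$ and $q^{1/4}$. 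Without BSD or a substitute, your argument has no discriminant bound at all.

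Second, your rank count in the supersingular case is wrong. Since $\jac C\sim E^2$, we have $\Hom(\jac C,E)\otimes\rat\cong(\End^0 E)^2$, a \emph{rank-two} right module over $\End^0 E$, not rank one. When $\End E$ is a maximal order in $\rat_{p,\infty}$ the lattice therefore has $\integ$-rank~$8$, not~$4$, and the relevant Hermite constant is $\gamma_8$, not $\gamma_4$. Your ``rank-one quaternionic lattice / quaternary form'' picture is inconsistent with $\jac C\sim E^2$ and would not yield the stated bound.
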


\begin{lemma}
\label{lemboundpsisum}
We have
\[
\sum_{n \le x} \psi(n) = \frac{15}{2\pi^2} x^2 + O(x \log x).
\]
\end{lemma}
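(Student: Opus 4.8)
The plan is to write the Dedekind function $\psi$ as a Dirichlet convolution and then evaluate the resulting double sum by elementary means, exactly as one does for $\sum_{n\le x}\varphi(n)$.

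First I would record the identity $\psi = \mathrm{id} * \mu^2$, where $\mathrm{id}(n) = n$ and $\mu^2$ denotes the indicator function of the squarefree integers; equivalently, $\psi(n) = \sum_{d\div n} d\,\mu^2(n/d)$. Both sides are multiplicative, so it suffices to check the identity on prime powers: for a prime $\ell$ and an exponent $e\ge 1$ one has $\psi(\ell^e) = \ell^e + \ell^{e-1}$, and since $\mu^2(\ell^j)$ vanishes for $j\ge 2$ we get $\sum_{j=0}^e \mu^2(\ell^j)\ell^{e-j} = \ell^e + \ell^{e-1}$ as well.

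Next I would interchange the order of summation and use $\sum_{d\le y} d = \tfrac12 y^2 + O(y)$:
\[
\sum_{n\le x}\psi(n) = \sum_{de\le x} d\,\mu^2(e)
  = \sum_{e\le x}\mu^2(e)\sum_{d\le x/e} d
  = \sum_{e\le x}\mu^2(e)\left(\frac{x^2}{2e^2} + O\!\left(\frac{x}{e}\right)\right).
\]
The $O(x/e)$ terms contribute $O\!\bigl(x\sum_{e\le x} 1/e\bigr) = O(x\log x)$. For the main term I would complete the series $\sum_{e\le x}\mu^2(e)/e^2$ to the full sum over all $e$, at the cost of a tail bounded by $\sum_{e>x} e^{-2} = O(1/x)$; the completed series is the standard Euler product $\sum_{e=1}^\infty \mu^2(e)/e^2 = \prod_\ell(1+\ell^{-2}) = \zeta(2)/\zeta(4)$, and $\zeta(2)/\zeta(4) = (\pi^2/6)/(\pi^4/90) = 15/\pi^2$. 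Multiplying by $x^2/2$ yields the main term $\tfrac{15}{2\pi^2}x^2$, while the tail contributes a further $O(x)$, which is absorbed into $O(x\log x)$.

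There is no genuine obstacle here — this is a textbook-style estimate. The only point requiring a little care is the bookkeeping: keeping the two error contributions (the $O(x/e)$ summands summing to $O(x\log x)$, and the tail of the zeta-quotient summing to $O(x)$) straight, and observing that the $O(x\log x)$ term dominates, so the stated error is the right one.
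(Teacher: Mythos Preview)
Your proof is correct and is essentially the same as the paper's: both use the convolution identity $\psi = \mathrm{id} * \mu^2$ (equivalently $\psi(n)=\sum_{d\mid n}\abs{\mu(d)}\,n/d$), swap the order of summation, apply $\sum_{d\le y}d=\tfrac12 y^2+O(y)$, and evaluate $\sum_e \mu^2(e)/e^2=\zeta(2)/\zeta(4)=15/\pi^2$. The only cosmetic difference is that you spell out the tail estimate explicitly, whereas the paper absorbs it silently into the $O(x\log x)$.
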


For every pair of isogenous curves $E$ and $E'$ over $\fq$, we let $s(E,E')$
denote the degree of the smallest isogeny from $E$ to $E'$.

\begin{lemma}
\label{lemsmallestisogenies}
Let  $E/\fq$ be an ordinary elliptic curve with $\relcond(E)= 1$, and let 
$\cals$ be a stratum of curves isogenous to~$E$.  Then
\[ 
\sum_{E' \in \cals} \frac{1}{s(E,E')^2} < \frac{\zeta(3)}{\relcond(\cals)^2},
\]
where $\zeta$ is the Riemann zeta function.
\end{lemma}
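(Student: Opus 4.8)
The plan is to translate the statement into the classical dictionary between ordinary elliptic curves over $\fq$ and fractional ideals, using the hypothesis $\relcond(E)=1$ in an essential way, and then to reduce everything to a purely arithmetic bound on a sum over an ideal class group.

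First I would exploit $\relcond(E)=1$: this says $\End E=\calo:=\integ[\frob_E]=\calo_{a,q}$, so by Proposition~\ref{proprelcond}(b) the endomorphism $\frob_E$ acts as no integer on any $E[\ell]$, whence $E[n]$ is a free rank-one $\calo/n\calo$-module for every $n$. Consequently the Frobenius-stable finite subgroup schemes $G\subseteq E$ are exactly the quotients $\mathfrak{b}/\calo$ for fractional $\calo$-ideals $\mathfrak{b}\supseteq\calo$; the isogeny $E\to E/(\mathfrak{b}/\calo)$ has degree $[\mathfrak{b}:\calo]$, its target has endomorphism ring the multiplier ring of $\mathfrak{b}$, and the target's isomorphism class as a curve is recorded by the class of $\mathfrak{b}$ in the class group of that multiplier ring (see Section~\ref{secendomorphisms}). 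Since a proper module over an imaginary quadratic order is invertible, for $E'\in\cals$ the isogenies $E\to E'$ correspond to integral invertible $\calo_\cals$-ideals $\mathfrak{c}$ lying in a fixed translate of the class attached to $E'$, and such an isogeny has degree $\relcond(\cals)\cdot\norm(\mathfrak{c})$: here $\relcond(\cals)=[\calo_\cals:\calo]$ is the degree of the unavoidable conductor-raising step, and $\norm(\mathfrak{c})$ is the degree of the remaining step inside $\cals$. In particular $E'\mapsto[\mathfrak{c}_{E'}]$ is a bijection $\cals\to\cl(\calo_\cals)$.

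Combining these observations, $s(E,E')=\relcond(\cals)\cdot m(E')$, where $m(E')$ is the least norm of an integral invertible $\calo_\cals$-ideal in the class $[\mathfrak{c}_{E'}]$ — with the proviso that when $\relcond(\cals)=1$ and $E'=E$ the isogeny in question is a nontrivial endomorphism, so that term is $s(E,E)=\min\{\norm(\alpha):\alpha\in\calo_\cals\setminus\calo_\cals^\times\}$. Therefore
\[
\relcond(\cals)^2\sum_{E'\in\cals}\frac{1}{s(E,E')^2}=\sum_{[\mathfrak{c}]\in\cl(\calo_\cals)}\frac{1}{m([\mathfrak{c}])^2},
\]
and it remains to prove that for every imaginary quadratic order $\calo$ one has $\sum_{[\mathfrak{c}]\in\cl(\calo)}m([\mathfrak{c}])^{-2}<\zeta(3)$.

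For this last inequality I would pass to reduced primitive binary quadratic forms $[a,b,c]$ of discriminant $\Delta(\calo)$: one has $m([\mathfrak{c}])=a$, distinct classes give distinct forms, a reduced form satisfies $-a<b\le a\le c$, and for fixed $a$ the admissible values of $b$ are the solutions of $b^2\equiv\Delta(\calo)\pmod{4a}$ in $(-a,a]$, a count that is multiplicative in $a$. Summing $a^{-2}$ over reduced forms and using the inequality $a\le c$ — which is exactly what keeps a given $a$ from contributing as much as \emph{all} primitive ideals of norm $a$ would (equivalently, after extracting the principal-ideal factor in the associated Dedekind-type zeta series) — reduces the claim to a convergent sum which one bounds, term by term together with a tail estimate, below $\zeta(3)$; the trivial class supplies the dominant term, and the combination of the $a\le c$ constraint with the non-unit convention in the $\relcond(\cals)=1$ case is what pulls the total under $\zeta(3)$. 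I expect this final numerical estimate — forcing the constant all the way down to $\zeta(3)$ rather than to some cruder absolute constant — to be the delicate point; the CM dictionary of the first two steps is routine.
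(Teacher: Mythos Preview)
Your CM/ideal-theoretic setup is correct and matches the paper's argument (the paper phrases it via Deligne modules, but the content is identical): every isogeny from $E$ to a curve in $\cals$ factors through a fixed conductor-raising isogeny $E\to\tilde E$ of degree $\relcond(\cals)$, followed by an isogeny inside $\cals$ corresponding to an invertible $\calo_\cals$-ideal. You correctly arrive at
\[
\relcond(\cals)^2\sum_{E'\in\cals}\frac{1}{s(E,E')^2}=\sum_{i=1}^{\#\cals}\frac{1}{\norm(\ida_i)^2},
\]
where the $\ida_i$ are \emph{distinct} invertible $\calo_\cals$-ideals, one of minimal norm per class.

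Where you diverge from the paper is the final estimate, and here you are making life much harder than necessary. You propose to bound $\sum_{[\ida]}m([\ida])^{-2}$ by counting reduced binary quadratic forms and performing what you yourself call a ``delicate'' numerical estimate that you do not actually carry out. The paper bypasses this entirely by overcounting: since the $\ida_i$ are finitely many distinct invertible ideals,
\[
\sum_i\frac{1}{\norm(\ida_i)^2}<\sum_{\text{all invertible }\ida\subseteq\calo_\cals}\frac{1}{\norm(\ida)^2}=\zeta_{\calo_\cals}(2),
\]
and then invokes Kaneko's explicit formula for the zeta function of an order to check that, for real $s>1$, each Euler factor of $\zeta_{\calo_\cals}(s)$ is bounded above by $(1-\ell^{1-2s})^{-1}$, whence $\zeta_{\calo_\cals}(2)<\zeta(3)$. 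No quadratic-form combinatorics, no tail estimate, no delicacy: two lines.

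Your proviso about treating $s(E,E)$ as the degree of a nontrivial endomorphism when $\relcond(\cals)=1$ is unnecessary and does not match the paper's convention. The paper takes $s(E,E)=1$ (the identity isogeny), corresponding to the unit ideal; since $\zeta_{\calo_\cals}(2)$ already includes the unit-ideal term, the bound goes through without any special handling of this case.
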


\begin{proof}[Proof of Proposition~\textup{\ref{propoi}}]
Proposition~\ref{propoi} gives an upper bound on the number of
principally-polarized abelian surfaces isogenous to the square of an ordinary
elliptic curve.  We would like to instead consider Jacobians. This requires 
that we first dispose of those principally-polarized surfaces that are not 
Jacobians of smooth curves; according to 
\cite[Thm.~3.1, p.~270]{GonzalezGuardiaEtAl2005}, these are the polarized 
surfaces that are products of elliptic curves with the product polarization,
together with the restrictions of scalars of polarized elliptic curves over the
quadratic extension of our base field. But the restriction of scalars of an 
elliptic curve over $\ff_{q^2}$ with trace of Frobenius $b$ is an abelian
surface over $\fq$ with Weil polynomial $x^4 - bx^2 + q^2$, and such a surface
is never isogenous to the square of an ordinary elliptic curve, because in that
case its Weil polynomial would have to be $(x^2 - ax + q)^2$ where $a$ is
coprime to~$q$.  Therefore, to dispose of the non-Jacobians, we need only 
consider products of elliptic curves, with the product polarization.

The number of elliptic curves in an ordinary isogeny class with trace of 
Frobenius equal to $a$ is equal to the Kronecker class number $H(a^2-4q)$ of
the discriminant $a^2 - 4q$ (see~\cite[Thm.~4.6, pp.~194--195]{Schoof1987}).
From Lemma~\ref{lemclassnumber} we know that
$
H(\Delta) \ll \abs{\Delta}^{1/2} \log\abs{\Delta} (\log\log\abs{\Delta})^2
$
for all negative discriminants $\Delta$.  Therefore the number of product 
surfaces $E\times E'$ with $E$ and $E'$ both in a fixed ordinary isogeny class
over $\fq$ is $\ll q (\log q)^2 (\log \log q)^4$; summing over isogeny
classes, we find that the number of product surfaces $E\times E'$ with $E$ and
$E'$ ordinary and isogenous to one another is 
$\ll q^{3/2} (\log q)^2 (\log \log q)^4.$  Thus, the contribution of the 
non-Jacobians to the ordinary split isotypic polarized surfaces is much less 
than the bound claimed in Proposition~\ref{propoi}. (Of course, for present
purposes, it suffices to observe that the number of non-Jacobians is bounded 
by the square of the number of elliptic curves over $\fq$; but the estimate 
provided here is closer to the actual truth.)

Fix an integer $a$ with $\abs a \le 2\sqrt q$ and $\gcd(a,q) =1$, and let $E_a$
be an elliptic curve over $\fq$ with $a(E) = a$ and with 
$\End E_a \iso \calo_{a,q}$, so that $\relcond(E_a) = 1$. Suppose $C$ is a
curve over $\fq$ whose Jacobian is isogenous to $E_a^2$. By 
Lemma~\ref{lemmapdegree} there is a morphism $\phi$ from $C\to E_a$ of degree 
at most $\sqrt{2q}$.  We can write this map as a composition of a minimal map
$C\to E$ (see Section~\ref{secgluing}) with an isogeny $E\to E_a$, and it 
follows that the degree of the minimal map $C\to E$ is at most
$\sqrt{2q} / s(E,E_a)$. If we let $N_a$ denote the number of genus-$2$ curves
with Jacobians isogenous to $E_a^2$, we find that
{\allowdisplaybreaks
\begin{align}
\notag
N_a &\le \sum_{E\sim E_a} 
     \#\st{\text{$C$ with minimal maps to $E$ of degree at most $\sqrt{2q}/s(E,E_a)$}}\\
\notag
    &\le \sum_{E\sim E_a} \quad \sum_{n\le\sqrt{2q}/s(E,E_a)} \quad
          \sum_{E'\sim E} \#\isom\inv(E,E',n)\\
\label{eqline3}          
    &\le \sum_{E\sim E_a} \quad \sum_{n\le\sqrt{2q}/s(E,E_a)} \quad
          2 \psi(n) \relcond(E) \sum_{E'\sim E} \relcond(E')\\
\notag
    &= 2\bigg(\sum_{E'\sim E_a} \relcond(E')\bigg)
        \sum_{E\sim E_a} \relcond(E) \sum_{n\le\sqrt{2q}/s(E,E_a)} \psi(n)\\
\label{eqline5}
    &\ll \bigg(\sum_{E'\sim E_a} \relcond(E')\bigg)
        \sum_{E\sim E_a} \relcond(E) \frac{2q}{s(E,E_a)^2}
         \text{\qquad for all $a$ and $q$}.
\end{align}
}%
Here~\eqref{eqline3} follows from Lemma~\ref{lemsumisogenousrelcond}
and~\eqref{eqline5} follows from Lemma~\ref{lemboundpsisum}. Now we group the 
curves $E$ isogenous to $E_a$ by their strata. Recall that we have 
$a^2 - 4q = \cond_{a,q}^2 \Delta^*_{a,q}$, and that the strata of curves 
isogenous to $E_a$ are indexed by the divisors $f$ of $\cond_{a,q}$.  We find 
that
\begin{alignat}{2}
\notag
N_a & \ll q \bigg(\sum_{E'\sim E_a} \relcond(E')\bigg)
       \sum_{\cals\sim E_a} \relcond(\cals) \sum_{E\in\cals} \frac{1}{s(E,E_a)^2}
     &\qquad\text{for all $a$ and $q$}\\
\label{eqline2}
    & \ll q \bigg(\sum_{E'\sim E_a} \relcond(E')\bigg)
       \sum_{f\div \cond_{a,q}} \frac{1}{f}
     &\text{for all $a$ and $q$}\\
\label{eqline3b}
    & \ll q \bigg(\sum_{E'\sim E_a} \relcond(E')\bigg) \abs{\log \log q},
     &\text{for all $a$ and $q$}&,
\end{alignat}       
where \eqref{eqline2} follows from Lemma~\ref{lemsmallestisogenies}
and~\eqref{eqline3b} follows from the asymptotic upper bound 
\cite[Thm.~323, p.~266]{HardyWright1968} 
\[
e^\gamma = \limsup_{n>0} \frac{\sum_{d\div n} d}{n \log\log n}
         = \limsup_{n>0} \frac{\sum_{d\div n} d/n}{\log\log n}
         = \limsup_{n>0} \frac{\sum_{d\div n} 1/d}{\log\log n}.
\]

Recall that $X_q$ is the number of principally-polarized ordinary split isotypic
abelian surfaces over $\fq$. Then $X_q$ is the sum over all $a$ coprime to $q$ 
of the $N_a$ (together with the negligible contribution from those abelian 
surfaces that are isomorphic, as principally-polarized abelian varieties, to 
products of isogenous elliptic curves), and we find that
\[
X_q \ll q \abs{\log\log q}\bigg(\sum_{\textup{ordinary }E/\fq} \relcond(E)\bigg)
\text{\qquad for all $q$}.
\]
Proposition~\ref{propoi} then follows from
Lemma~\ref{lemboundsumrelcond}.
\end{proof}

\begin{proof}[Proof of Lemma~\textup{\ref{lemmapdegree}}]
Choose a divisor of degree $1$ on $C$, and let $L$ be the additive group of
morphisms from $C$ to $E$ that send the given divisor to the identity of~$E$.
Let $\bolde$ be the base extension of $E$ from $\fq$ to the function field $F$ 
of~$C$.  The \emph{Mordell--Weil lattice} of $\bolde$ over $F$ is the group
$\bolde(F)/E(\fq)$ provided with the pairing coming from the canonical height.
The natural map $L\to \bolde(F)/E(\fq)$ is a bijection, and the quadratic form 
on $L$ obtained from the height pairing on $\bolde(F)$ is twice the degree map
(see~\cite[Thm.~III.4.3, pp.~217--218]{Silverman1994}). Let $a = a(E)$, and let
$\pi$ and $\pibar$ be the roots in $\complexes$ of the characteristic polynomial
of Frobenius for $E$, so that $\pi+\pibar = a$. The Birch and Swinnerton-Dyer
conjecture for constant elliptic curves over function fields (proved by 
Milne~\cite[Thm.~3, pp.~100--101]{Milne1968}) shows that the determinant of the
Mordell--Weil lattice is a divisor of 
\[
\begin{cases}
(\pi-\pibar)^4 = (a^2 - 4q)^2  & \text{if $\pi\neq\pibar$,}\\
q^2 & \text{if $\pi=\pibar$};
\end{cases}
\]
note that $\pi=\pibar$ if and only if $E$ is supersingular with all of its
endomorphisms rational over $\fq$.

The $\integ$-rank of $L$ is twice the $\integ$-rank of $\End E$. If 
$\pi\neq\pibar$, so that $\End E$ is an imaginary quadratic order, then $L$ is
a $\integ$-module of rank~$4$. Applying~\cite[Thm.~12.2.1, p.~260]{Cassels1978}
we find that there is a nonzero element of $L$ of degree at most
\[
\frac{1}{2} \gamma_4 \abs{a^2-4q}^{1/2},
\]
where $\gamma_4$ is the Hermite constant for dimension~$4$. Using the fact that
$\gamma_4 = \sqrt{2}$ (see~\cite{Hermite1850}), we obtain the bound in the 
lemma.

If $\pi=\pibar$ then $\End E$ is an order in a quaternion algebra and $L$ is a
$\integ$-module of rank~$8$. We find that there is a nonzero element of $L$ with
degree at most
\[
\frac{1}{2} \gamma_8 q^{1/4}.
\]
The value of $\gamma_8$ was determined by Blichfeldt~\cite{Blichfeldt1935}
to be~$2$, so there is a map from $C$ to $E$ of degree at most~$q^{1/4}$.
\end{proof}

\begin{proof}[Proof of Lemma~\textup{\ref{lemboundpsisum}}]
First we note that
\[
\psi(n) = \sum_{d\div n} \abs{\mu(d)} \frac{n}{d},
\]
where $\mu$ is the M\"obius function.  Then, arguing as in the proof 
of~\cite[Thm.~3.7, p.~62]{Apostol1976}, we see that
\begin{align*}
\sum_{n\le x}\psi(n) & = \sum_{\substack{d,q\\ dq\le x}} \abs{\mu(d)} q 
                       = \sum_{d\le x}\abs{\mu(d)} \sum_{q\le x/d} q \\
  &= \sum_{d\le x} \abs{\mu(d)}\left(\frac{1}{2}\left(\frac{x}{d}\right)^2 
                                   + O\left(\frac{x}{d}\right)\right)\\
  &= \frac{1}{2} x^2 \sum_{d\le x} \frac{\abs{\mu(d)}}{d^2} 
                                 + O\left(x\sum_{d\le x}\frac{1}{d}\right)\\
  &= \frac{1}{2} c x^2 + O(x\log x),\\
\end{align*}
where 
\[
c = \sum_{d=1}^\infty \frac{\abs{\mu(d)}}{d^2} 
  = \prod_{\ell} \left(1 + \frac{1}{\ell^2}\right)
  = \prod_{\ell} \frac{\left(1 - {1}/{\ell^4}\right)}{\left(1 - {1}/{\ell^2}\right)}
  = \frac{\zeta(2)}{\zeta(4)} = \frac{15}{\pi^2}.\qedhere
\]
\end{proof}
  
\begin{proof}[Proof of Lemma~\textup{\ref{lemsmallestisogenies}}]
Let $\calo = \calo_\cals$ be the order corresponding to the stratum $\cals$.
We claim that there is an elliptic curve $\tilde{E}$ in $\cals$ and an isogeny
$f\colon E\to \tilde{E}$ with the property that every isogeny from $E$ to an 
elliptic curve in $\cals$ factors through $f$.  One way to see this is via the
theory of Deligne modules~\cite{Deligne1969,Howe1995}.  If we let $\pi$ be the 
Frobenius for $E$ and let $K$ be the quadratic field $\rat(\pi)$, then the
Deligne modules of the elements of $\cals$ can be viewed as lattices in $K$
with endomorphism rings equal to $\calo$, while the Deligne module for $E$ can
be viewed as a lattice $\Lambda\subset K$ with $\End \Lambda =\integ[\pi]$.  
The curve $\tilde{E}$ is the elliptic curve corresponding to the Deligne module
$\Lambda\otimes\calo$, and the isogeny $f$ corresponds to the inclusion 
$\Lambda\subset\Lambda\otimes\calo$. In particular, we see that the degree of
$f$ is equal to~$\relcond(\cals)$.

The isogenies from $\tilde{E}$ to the other elements of $\cals$ correspond to
the invertible ideals $\ida\subset\calo$, with different ideals giving rise to 
different isogenies.  Let $\ida_1,\ldots,\ida_n$ be the (distinct) ideals 
corresponding to the smallest isogenies from $\tilde{E}$ to the elements of 
$\cals$, where $n = \#\cals$. Then
\[
\sum_{E'\in \cals} \frac{1}{s(E,E')^2}
 = \sum_{i=1}^n \frac{1}{\relcond(\cals)^2 \norm(\ida_i)^2}
 < \frac{1}{\relcond(\cals)^2} \sum_{\text{all $\ida$}} \frac{1}{\norm(\ida)^2},
\]
where the final sum is over all invertible ideals $\ida\subseteq\calo$; that 
is, the final sum is equal to $\zeta_\calo(2)$, where $\zeta_\calo$ is the zeta
function for the order $\calo$.

Kaneko~\cite[Proposition, p.~202]{Kaneko1990} gives an explicit formula for 
$\zeta_\calo(s)$ in terms of the zeta function for $K$ and the conductor 
of~$\calo$.  It is not hard to check that for real $s>1$, the Euler factor at
$\ell$ for $\zeta_\calo(s)$ is bounded above by $1/(1-\ell^{1-2s})$, so that 
$\zeta_\calo(s) < \zeta(2s-1)$, where $\zeta$ is the Riemann zeta function.  
In particular, $\zeta_\calo(2) < \zeta(3)$, and the lemma follows.
\end{proof}

\section{Almost ordinary split surfaces}
\label{secprankone}

In this section we prove Proposition~\ref{proppro}, which gives an upper bound
on the number of principally-polarized almost ordinary split abelian surfaces.
We base the proof on two lemmas, which we prove at the end of the section.

\begin{lemma}
\label{lemfrobint}
Let $E_0$ be a supersingular elliptic curve over a finite field~$\fq$, with $q$
a square, and suppose the Frobenius endomorphism on $E_0$ is equal to
multiplication by~$s$, where $s^2 = q$. Let $\cals$ be a stratum of ordinary
elliptic curves over $\fq$, and suppose $n$ is a positive integer such that 
$\isom\inv(E_0,\cals,n)$ is nonempty.  Then
\begin{alphabetize}
\item the integer $n$ is coprime to $q$,
\item the relative conductor $\relcond(\cals)$ is divisible by $n$, and
\item the trace $a(\cals)$ satisfies $4a(\cals) \equiv 8s \bmod n^2$.
\end{alphabetize}
\end{lemma}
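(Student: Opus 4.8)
The plan is to work inside the étale group scheme $E[n]$ and $F[n]$ for $F \in \cals$, translating the existence of an anti-isometry into congruence conditions on Frobenius. First I would handle (a): if $\isom\inv(E_0,\cals,n)$ is nonempty then there is an $F \in \cals$ with $E_0[n] \cong F[n]$ as group schemes. Since $F$ is ordinary, $F[p]$ has a nontrivial local part and a nontrivial reduced part, so any group scheme isomorphic to $F[p]$ has the same property. But $E_0$ is supersingular, so $E_0[p]$ is local-local (its only geometric point is $0$, and dually its Cartier dual is also connected). Hence $E_0[n]$ and $F[n]$ cannot be isomorphic if $p \mid n$; therefore $\gcd(n,q)=1$. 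With (a) in hand, both $E_0[n]$ and $F[n]$ are determined by their Galois modules of geometric points together with the Frobenius action, so the isomorphism $E_0[n] \cong F[n]$ is an isomorphism of $\fq$-Galois modules.

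Next, (b). On $E_0$ the Frobenius endomorphism $\frob_{E_0}$ is multiplication by the integer $s$, so it certainly acts as an integer on $E_0[n]$; transporting via the group-scheme isomorphism $E_0[n] \cong F[n]$, the Frobenius of $F$ acts on $F[n]$ as an integer as well. By Proposition~\ref{proprelcond}(b) (or Corollary~\ref{correlcond} with the divisor $n$ of itself), this forces $n \mid \relcond(F) = \relcond(\cals)$, which is (b).

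For (c): again use that $\frob_F$ acts as an integer $b$ on $F[n]$ — indeed this integer must be the same one by which $\frob_{E_0} = s$ acts on $E_0[n]$, namely $b \equiv s \pmod n$ — because the isomorphism $E_0[n]\cong F[n]$ is Galois-equivariant. Now $\frob_F$ satisfies its characteristic polynomial $T^2 - a(\cals)T + q$, and on $F[n]$ it acts as $b$, so $b^2 - a(\cals) b + q \equiv 0 \pmod n$. More precisely, from Proposition~\ref{proprelcond}(a) there is an integer $b$ (well-defined mod $n$, in fact mod $\relcond(\cals)$) with $(\frob_F - b)/n \in \End F$; squaring $\frob_F = b + n\omega$ for $\omega \in \End F$ and using $\frob_F^2 = a(\cals)\frob_F - q$ gives $b^2 + q \equiv a(\cals)b \pmod{n^2}$ after matching the "integer parts," and similarly $2b \equiv a(\cals) \pmod{n^2}$ from the "$\omega$-parts." Since $b \equiv s \pmod n$ we get $a(\cals) \equiv 2b \pmod{n^2}$, hence multiplying by $2$, $2a(\cals) \equiv 4b \equiv 4s \pmod{2n^2}$; in particular $4a(\cals) \equiv 8s \pmod{n^2}$, which is (c). (The factors of $4$ and $8$ rather than $1$ and $2$ presumably come from being careful about the prime $2$, where $\End F = \integ[\omega]$ with $\omega$ a root of $T^2 - T + \tfrac{1-\Delta^*}{4}$ rather than $T^2 = \Delta^*/4$, so one should phrase the congruence to avoid dividing by $2$.)

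The main obstacle I expect is the precise bookkeeping in (c): I need to track exactly which congruence the integer $b$ and the trace $a(\cals)$ satisfy modulo $n^2$ (as opposed to modulo $n$), and to do so uniformly whether or not $2 \mid n$. The clean way is to write $\End F = \integ + n'\calo_{\max}$ where $n' = \cond_{a,q}/\relcond(\cals)$ and express $\frob_F$ in an integral basis of $\End F$; since $n \mid \relcond(\cals)$, reducing the relation $\frob_F^2 = a(\cals)\frob_F - q$ modulo $n^2 \End F$ and reading off coordinates yields the stated congruence, with the factor of $4$ absorbing the denominator-$2$ issue in the standard integral basis $\{1,(1+\sqrt{\Delta^*})/2\}$ of the maximal order. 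Everything else is a direct application of Proposition~\ref{proprelcond} and Corollary~\ref{correlcond} together with the connectedness of $E_0[p]$.
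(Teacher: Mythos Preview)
Your arguments for (a) and (b) are correct and essentially match the paper's. For (b) the paper quotes Lemma~\ref{lemboundantiisometries} together with the convention $\relcond(E_0)=0$, while you appeal directly to Proposition~\ref{proprelcond}; both are fine.

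Your argument for (c) has a genuine gap. Writing $\frob_F = b + n\omega$ with $\omega\in\End F$ and comparing with $\frob_F^2 = a\,\frob_F - q$, the ``$\omega$-part'' gives only $a \equiv 2b \pmod{n}$, not $\pmod{n^2}$ as you assert. Concretely, if $t=\tr(\omega)$ and $N=N(\omega)$ (both integers, since $\omega\in\End F$), the identity reads
\[
\bigl(n^2 t + n(2b-a)\bigr)\omega + \bigl(b^2 - ab + q - n^2 N\bigr)=0,
\]
so $a = 2b + nt$ and $b^2 - ab + q = n^2 N$. Your integer-part congruence $b^2+q\equiv ab\pmod{n^2}$ is correct, but the $\omega$-part congruence is only modulo~$n$. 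The subsequent step ``$4b\equiv 4s\pmod{2n^2}$'' is likewise unjustified: from $b\equiv s\pmod n$ one only gets $4b\equiv 4s\pmod{4n}$. Your approach \emph{can} be repaired --- combining $a=2b+nt$, $b=s+nm$, and $(2s-a)b = n^2(N-m^2)$ with $\gcd(b,n)=1$ forces $2m+t\equiv 0\pmod n$, whence $a-2s=n(2m+t)\equiv 0\pmod{n^2}$ --- but this is not what you wrote.

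The paper's route to (c) avoids working inside $\End F$ altogether. From (b) one has $n\mid\relcond(\cals)\mid\cond_{a,q}$, hence $n^2\mid a^2-4q$; and the isomorphism $E_0[n]\cong E[n]$ gives $a\equiv 2s\pmod n$. Squaring the latter and substituting $a^2\equiv 4s^2\pmod{n^2}$ yields $8s^2-4as\equiv 0\pmod{n^2}$; dividing by the unit $s$ gives $4a\equiv 8s\pmod{n^2}$. This two-line argument replaces all of your $\End F$ bookkeeping.
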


\begin{lemma}
\label{lemboundpsisum2}
We have
\[
\sum_{n \le x} \frac{\psi(n)}{n}= \frac{15}{\pi^2} x + O(\log x).
\]
\end{lemma}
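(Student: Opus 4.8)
The plan is to imitate almost verbatim the proof of Lemma~\ref{lemboundpsisum}, the only change being that we divide through by $n$ before summing. The starting point is the identity already used there, namely
\[
\psi(n) = \sum_{d\div n} \abs{\mu(d)}\,\frac{n}{d},
\]
which upon division by $n$ becomes $\psi(n)/n = \sum_{d\div n}\abs{\mu(d)}/d$ (equivalently, this just expresses $\prod_{\ell\div n}(1+1/\ell)$ as a sum over squarefree divisors). I would then interchange the order of summation in the style of \cite[Thm.~3.7, p.~62]{Apostol1976}:
\[
\sum_{n\le x}\frac{\psi(n)}{n}
 = \sum_{n\le x}\sum_{d\div n}\frac{\abs{\mu(d)}}{d}
 = \sum_{d\le x}\frac{\abs{\mu(d)}}{d}\bigfloor{\frac{x}{d}}
 = \sum_{d\le x}\frac{\abs{\mu(d)}}{d}\left(\frac{x}{d} + O(1)\right).
\]

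Next I would split this into a main term and an error term:
\[
\sum_{n\le x}\frac{\psi(n)}{n}
 = x\sum_{d\le x}\frac{\abs{\mu(d)}}{d^2} + O\!\left(\sum_{d\le x}\frac{1}{d}\right).
\]
The error term is $O(\log x)$ by the estimate for the harmonic sum. For the main term I would complete the sum to infinity, noting that the tail $\sum_{d>x}\abs{\mu(d)}/d^2 = O(1/x)$, so that $x\sum_{d\le x}\abs{\mu(d)}/d^2 = x\sum_{d=1}^\infty \abs{\mu(d)}/d^2 + O(1)$. The infinite sum is evaluated exactly as in Lemma~\ref{lemboundpsisum}: $\sum_{d=1}^\infty \abs{\mu(d)}/d^2 = \prod_\ell(1+1/\ell^2) = \zeta(2)/\zeta(4) = 15/\pi^2$. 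Combining, $\sum_{n\le x}\psi(n)/n = (15/\pi^2)x + O(1) + O(\log x) = (15/\pi^2)x + O(\log x)$, which is the claim.

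There is essentially no obstacle here; the argument is entirely routine and parallels Lemma~\ref{lemboundpsisum} step for step, with the only (trivial) point of care being to confirm that the tail of $\sum \abs{\mu(d)}/d^2$ past $x$ contributes $O(1)$ after multiplication by $x$, which is immediate since $\sum_{d>x} 1/d^2 \ll 1/x$. If one wanted, one could also derive this lemma directly from Lemma~\ref{lemboundpsisum} by partial summation, but the self-contained computation above is shorter.
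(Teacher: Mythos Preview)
Your proof is correct and follows essentially the same approach as the paper's own proof: both use the identity $\psi(n)/n=\sum_{d\mid n}\abs{\mu(d)}/d$, interchange the order of summation to obtain $\sum_{d\le x}\frac{\abs{\mu(d)}}{d}\lfloor x/d\rfloor$, and then evaluate the main term as $(15/\pi^2)x$ with an $O(\log x)$ error. You are merely a bit more explicit than the paper about completing the sum $\sum_{d\le x}\abs{\mu(d)}/d^2$ to infinity and bounding the tail.
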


\begin{proof}[Proof of Proposition~\textup{\ref{proppro}}] 
In analogy with Section~\ref{seconi}, we will bound the number $Y_q$ of 
principally-polarized almost ordinary split abelian surfaces over $\fq$ by 
estimating the number of surfaces obtained by gluing a supersingular $E_0$ to 
an ordinary $E$ along their $n$-torsion subgroups.  The methods we use will 
depend on whether or not $E_0$ has all of its endomorphisms defined over~$\fq$.
Let $Y_{q,1}$ denote the number of principally-polarized surfaces we get from
$E_0$ with all of the endomorphisms defined, and let $Y_{q,2}$ denote the 
number we get from $E_0$ with not all endomorphisms defined.  We will show that 
$Y_{q,1}$ and $Y_{q,2}$ each satisfy the bound of Proposition~\ref{proppro}.

First let us bound $Y_{q,1}$; that is, we consider the case where all of the
endomorphisms of $E_0$ are defined over~$\fq$. In this case, $q$ is a square 
and the characteristic polynomial of $E_0$ is $(T - 2s)^2$, where $s^2 = q$;
furthermore,~\cite[Thm.~4.6, pp.~194--195]{Schoof1987} tells us that there are 
\[
\oneover{12}\left(p + 6 - 4\quadres{-3}{p} - 3\quadres{-4}{p} \right) 
  \le\frac{\sqrt{q}}{2}
\]
such curves for each of the two possible values of $s$, so at most $\sqrt{q}$
curves in total.

Fix such an $E_0$ and fix an integer $n>0$.  Suppose $\cals$ is an ordinary 
stratum of elliptic curves over $\fq$ such that $\isom\inv(E_0,\cals,n)$ is
nonempty.  If $n$ is even let $m = n/2$; otherwise let $m = n$.  We see from 
Lemma~\ref{lemfrobint} that the trace $a(\cals)$ of $\cals$ is an integer
congruent to $2s$ modulo $m^2$, but not equal to~$2s$. The number of such 
integers $a$ in the Weil interval is at most $\floor{4\sqrt{q}/m^2}$.

Given such an integer $a$, write $a^2 - 4q = \cond_{a,q}^2 \Delta_{a,q}^*$
for a fundamental discriminant $\Delta_{a,q}^*$.  Let $\chi$ be the quadratic
character modulo $\Delta_{a,q}^*$, and for each divisor $d$ of $\cond_{a,q}/n$
let $\cals_d$ be the stratum $\cals$ with $a(\cals) = a$ and $\cond(\cals) = d$.
Using Lemma~\ref{lemboundantiisometries} we find that
\begin{align*}
\sum_{E\in\cali(\fq,a)} \#\isom\inv(E_0[n], E[n])
  & = \sum_{\substack{\cals\text{ with}\\ a(\cals) = a \text{ and } n\div\relcond(\cals)}}
       \#\isom\inv(E_0,\cals,n)\\
  & = \sum_{ d \div (\cond_{a,q}/n)} \#\isom\inv(E_0,\cals_d,n)\\
  & \le 2 \sum_{ d \div (\cond_{a,q}/n)}  \psi(n) h(\calo_{\cals_d}) n^2\\
  & = 2 \psi(n) n^2 H\left(\frac{a^2 - 4q}{n^2}\right),\\
\end{align*}           
where $H(x)$ is the Kronecker class number.  Thus,
\[
\sum_{E\in\cali(\fq,a)} \#\isom\inv(E_0[n], E[n])
\ll\begin{cases}
2 \psi(n) n q^{1/2}(\log q)    (\log \log q)^2 & \text{for all $a$ and $q$, unconditionally,}\\
2 \psi(n) n q^{1/2}        \abs{\log \log q}^3 & \text{for all $a$ and $q$, under GRH.}
\end{cases}
\]
Summing over the  $\floor{4\sqrt{q}/m^2}$ possible values of $a$ for a
given~$n$, and then summing over the possible $n < 4\sqrt{q}$, and then summing
over the possible curves~$E_0$, we find that 
\[
Y_{q,1} \ll q^{3/2} (\log q)(\log \log q)^2 \sum_{n=1}^{4\sqrt{q}} \frac{\psi(n)}{n}
        \ll q^{2} (\log q)(\log \log q)^2 
        \text{\qquad for all $q$}.
\]
If the generalized Riemann hypothesis holds, we get the better bound
\[
Y_{q,1} \ll q^{2} \abs{\log \log q}^3 \text{\qquad for all $q$}.
\]

Now we turn to estimating $Y_{q,2}$, the number of principally-polarized split
surfaces isogenous to a surface of the form $E_0\times E$, where $E$ is 
ordinary and $E_0$ is supersingular with not all endomorphisms defined.
Using~\cite[Thms.~4.2, 4.3, and~4.5, pp.~194--195]{Schoof1987}, we find that
the possible strata of such curves $E_0$ are as listed in Table~\ref{tab1}.

\smallskip
\begin{table}[ht]
\begin{center}
\renewcommand{\arraystretch}{1.4}
\begin{tabular}{lllcccc}
\toprule
Conditions on $q$ & Conditions on $p$  &&     $a(\cals)$ & $\Delta(\calo_\cals)$ & $\relcond(\cals)$ & $\#\cals$           \\
\midrule
$q$ nonsquare     & ---                &&      $0$       &         $-4p$         &   $ \sqrt{q/p}$   & $h(-4p)$            \\
                  & $p\equiv 3\bmod 4$ &&      $0$       &          $-p$         &   $2\sqrt{q/p}$   & $h( -p)$            \\
                  & $p = 2$            && $\pm\sqrt{2q}$ &          $-4$         &   $ \sqrt{q/2}$   & $1$                 \\
                  & $p = 3$            && $\pm\sqrt{3q}$ &          $-3$         &   $ \sqrt{q/3}$   & $1$                 \\
$q$ square        & ---                &&      $0$       &          $-4$         &   $ \sqrt{q  }$   & $1 - \quadres{-4}{p}$ \\
                  & ---                && $\pm\sqrt{ q}$ &          $-3$         &   $ \sqrt{q  }$   & $1 - \quadres{-3}{p}$ \\
\bottomrule
\end{tabular}
\vskip 3ex
\end{center}
\caption{The supersingular strata $\cals$ over $\fq$ with not all endomorphisms
         defined over $\fq$.  Here $q$ is a power of a prime $p$.}
\label{tab1}         
\end{table}

Let $E_0$ be a supersingular curve with not all endomorphisms defined. If we 
are to glue $E_0$ to an ordinary elliptic curve $E$ along the $n$-torsion of 
the two curves, then $n$ must be coprime to~$q$. In that case, the greatest 
common divisor of $\relcond(E_0)$ and $n$ is either $1$ or $2$, as we see from
Table~\ref{tab1}.  It follows from Lemma~\ref{lemboundantiisometries} that for 
every ordinary stratum $\cals$, we have
\[
\#\isom\inv(E_0,\cals,n) \le 8 \psi(n) h(\calo_\cals),
\]
so the total number of curves obtained from gluing $E_0$ to an ordinary
elliptic curve is bounded by 
\[
8 \sum_{\text{ordinary }\cals}  h(\calo_\cals) \sum_{n\div (a(\cals) - a(E_0))} \psi(n)
\ll
q^{1/2} (\log\log q)^2 \sum_{\text{ordinary }\cals}  h(\calo_\cals)
\text{\qquad for all $q$}
\]
by Lemma~\ref{lemboundsumpsi}.  This last sum is simply the number of ordinary
elliptic curves over $\fq$, which (one shows) is at most $2q + 4$, so the 
number of curves obtained as above from a fixed $E_0$ is 
$\ll q^{3/2} (\log\log q)^2$ for all $q$.

If $q$ is a square there are at most $6$ possible $E_0$, and we find that
\[
Y_{q,2} \ll q^{3/2} (\log\log q)^2\text{\qquad for all square $q$}.
\]
If $q$ is not a square, then Lemma~\ref{lemclassnumber}
shows that the number of possible $E_0$ is $\ll q^{1/2} \log q$ for all $q$
unconditionally, and $\ll  q^{1/2} \abs{\log \log q}$ for all $q$ under the
generalized Riemann hypothesis. This leads to
\[
Y_{q,2}\ll\begin{cases}
q^2 (\log q)    (\log\log q)^2   & \text{for all $q$ unconditionally,}\\
q^2         \abs{\log\log q}^3   & \text{for all $q$ under GRH,}
\end{cases}
\]
and completes the proof of Proposition~\ref{proppro}.
\end{proof}

\begin{proof}[Proof of Lemma~\textup{\ref{lemfrobint}}]
Since $\isom\inv(E_0,\cals,n)$ is nonempty, there is an $E \in \cals$ with 
$E_0[n]\cong E[n]$.  The $p$-torsion of $E_0$ is a local-local group scheme, 
while $E[p]$ has no local-local part, so $n$ must not be divisible by $p$. 
This proves~(a).

Lemma~\ref{lemboundantiisometries} shows that 
$\gcd(n,\relcond(E_0)) = \gcd(n,\relcond(\cals))$.  Since $\relcond(E_0) = 0$,
we find that $n\div \relcond(\cals)$.  This proves~(b).

Let $a = a(\cals)$.  From (b) we know that $a^2 - 4q \equiv 0 \bmod n^2$, and
we also know that $a \equiv a(E_0) = 2s \bmod n$. Since $a - 2s \equiv 0\bmod n$
we have 
\[
0 \equiv a^2 - 4as + 4s^2
  \equiv 4s^2 - 4as + 4s^2 
  \equiv 8s^2 - 4as \bmod n^2.
\]
Since $s$ is coprime to $n$ by (a), we can divide through by $s$ to obtain~(c).
\end{proof}

\begin{proof}[Proof of Lemma~\textup{\ref{lemboundpsisum2}}]
The proof is quite similar to that of Lemma~\ref{lemboundpsisum}. We have
\[
\sum_{n\le x}\frac{\psi(n)}{n} 
  = \sum_{\substack{d,q\\ dq\le x}} \frac{\abs{\mu(d)}}{d} 
  = \sum_{d\le x}\frac{\abs{\mu(d)}}{d} \left\lfloor \frac{x}{d}\right\rfloor 
  = x \sum_{d\le x} \frac{\abs{\mu(d)}}{d^2}  + O(\log x)
  = c x + O(\log x)
\]
where 
$c = \sum_{d=1}^\infty \abs{\mu(d)}/d^2 = 15/\pi^2.$
\end{proof}

\section{Supersingular split surfaces}

In this section we prove Proposition~\ref{propss}, which gives a bound on the 
number $Z_q$ of principally-polarized supersingular split abelian surfaces 
over~$\fq$. 

We must first introduce some terminology and some background results. Let $A$
be an abelian surface over a finite field $\fq$ of characteristic~$p$, and let
$\aalpha_p$ denote the (unique) local-local group scheme of rank $p$ over~$\fq$.
The \emph{$a$-number} of $A$ is the dimension of the $\fq$-vector space
$\Hom(\aalpha_p,A)$. If $A$ has $a$-number $2$ then $A$ is called 
\emph{superspecial}; all superspecial surfaces over $\fq$ are geometrically 
isomorphic to one another, and they are all geometrically isomorphic to the 
square of a supersingular elliptic curve.  A supersingular surface $A$ has 
$a$-number equal to either $1$ or $2$; if the $a$-number is~$1$, then $A$ has a
unique local-local subgroup scheme of rank~$p$, and the quotient of $A$ by this
subgroup scheme is a superspecial surface.

Let $\cala_2^\supersing$ denote the supersingular locus of the coarse moduli
space of principally-polarized abelian surfaces.  
Koblitz~\cite[p.~193]{Koblitz1975} shows that the only singularities of 
$\cala_2^\supersing$ are at the superspecial points, and 
from~\cite[Proof of Cor.~4.7, p.~117]{Oort1974} we know that each irreducible
component of $\cala_{2,\fpbar}^\supersing$ is a curve of genus~$0$. Also, every 
component contains a superspecial point.  Therefore, the non-superspecial locus
of $\cala_{2,\fpbar}^\supersing$ is a disjoint union of components, each of
which is isomorphic to an open affine subset of~$\affine^1$.

Moreover, the number of irreducible components of $\cala_{2,\fpbar}^\supersing$
is equal to the class number $H_2(1,p)$ of the non-principal genus of 
$\rat^2_{p,\infty}$ (see~\cite[Thm.~5.7, p.~133]{KatsuraOort1987}). Hashimoto
and Ibukiyama~\cite{HashimotoIbukiyama1981} 
(see also~\cite[Rmk.~2.17, p.~147]{IbukiyamaKatsuraEtAl1986}) provide a formula
for $H_2(1,p)$ which shows both that $H_2(1,p) = p^2/2880 + O(p)$ and that 
$H_2(1,p) \le p^2/4$ for all~$p$.

For convenience, we also state the following lemma.
\begin{lemma}
\label{lemforms}
Let $(A,\lambda)$ be a principally-polarized abelian surface over $\fqbar$ that
has a model over $\fq$.  Then the number of distinct $\fq$-rational models of 
$(A,\lambda)$ is at most $1152$.
\end{lemma}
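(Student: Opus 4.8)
The statement to be proved is Lemma \ref{lemforms}: a principally-polarized abelian surface $(A,\lambda)$ over $\fqbar$ that admits an $\fq$-rational model has at most $1152$ distinct $\fq$-rational models. The natural framework here is Galois descent: the set of $\fq$-rational models of $(A,\lambda)$, up to $\fq$-isomorphism, is in bijection with the pointed cohomology set $H^1(\gal(\fqbar/\fq), \aut_{\fqbar}(A,\lambda))$. Since $\gal(\fqbar/\fq) \cong \hat\integ$ is procyclic (topologically generated by Frobenius), and $G := \aut_{\fqbar}(A,\lambda)$ is a finite group, this cohomology set is finite and its cardinality is bounded by the number of $\frob$-twisted conjugacy classes in $G$, hence crudely by $\#G$ itself. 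So the entire lemma reduces to the purely group-theoretic fact that the automorphism group of a principally-polarized abelian surface over an algebraically closed field has order at most $1152$.

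**Key steps.** First I would recall the descent bijection: two models $(A_1,\lambda_1)$ and $(A_2,\lambda_2)$ over $\fq$ become isomorphic over $\fqbar$ precisely when they give the same class in $H^1(\gal(\fqbar/\fq), G)$, and conversely every class is realized. Second, I would bound $\# H^1(\hat\integ, G)$: a cocycle is determined by the image of Frobenius, which can be any element of $G$ (since $H^1$ of a procyclic group with finite coefficients is the set of $\frob$-conjugacy classes), so $\# H^1 \le \#G$. Third — and this is the arithmetic heart — I would invoke the classification of automorphism groups of principally-polarized abelian surfaces over an algebraically closed field. Such a surface is either a product of two elliptic curves with the product polarization, or the Jacobian of a genus-$2$ curve. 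In the Jacobian case, $\aut(\jac C, \lambda) \cong \aut(C) \times \{\pm 1\}$ if the hyperelliptic involution is central (more precisely one gets the reduced automorphism group of $C$ extended by $\langle -1\rangle$), and the genus-$2$ automorphism groups are completely classified (Bolza, Igusa); the largest has order $48$, so the polarized automorphism group has order at most $2 \cdot 48 = 96$ in that case — well under $1152$. The delicate case is the product $E_1 \times E_2$: here $\aut(E_1 \times E_2, \lambda)$ can be as large as $(\aut E_1 \times \aut E_2) \rtimes \integ/2$ when $E_1 \cong E_2$, and if $E_1 \cong E_2$ has $j = 0$ then $\aut E_i$ has order $6$ in characteristic $\neq 2,3$, giving $6 \cdot 6 \cdot 2 = 72$; but in characteristic $2$ or $3$ a supersingular elliptic curve can have automorphism group of order $24$, so one gets up to $24 \cdot 24 \cdot 2 = 1152$. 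That accounts for the exact constant $1152$.

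**Main obstacle.** The genuinely delicate point is establishing the sharp bound $\#G \le 1152$, which requires knowing the maximal automorphism group of a supersingular elliptic curve in small characteristic (order $24$ when $p = 2$ via $E: y^2 + y = x^3$, and order $12$ when $p = 3$, so actually the worst product case is $p = 2$), together with the "swap" factor of $2$ when the two factors are isomorphic. I would assemble these from the standard references (Silverman's \emph{Advanced Topics}, III.10, for automorphisms of elliptic curves; the Bolza/Igusa classification for genus $2$), and compare the resulting numbers against $1152$, noting that the superspecial product case in characteristic $2$ is what makes the bound tight. A minor subtlety worth a sentence: I should confirm that passing from $\aut_{\fqbar}(A,\lambda)$ to $\aut_{\fqbar}(A)$ (forgetting the polarization) only shrinks the group, so it suffices to work with the polarized automorphisms throughout, and that the descent argument is insensitive to whether we count models up to $\fq$-isomorphism (which is what the lemma asks) versus something finer. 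Since the lemma only needs \emph{an} upper bound and $1152$ is generous except in the extreme case, the proof is short: cite descent, reduce to $\#G$, cite the classification, take the maximum.
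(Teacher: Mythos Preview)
Your approach is essentially the paper's: bound the number of $\fq$-forms by $\#\aut_{\fqbar}(A,\lambda)$ via descent over a procyclic Galois group (the paper cites \cite[Lemma~7.2]{BrockGranville2001} for exactly this), and then bound that order by $1152$ using the classification (Igusa for Jacobians, a direct check for products). Your identification of the extremal case is correct---for the product polarization on $E\times E$ the unitarity condition $\bar M^{t}M=I$ forces $M$ to be diagonal or anti-diagonal with unit entries, so the group really is $(\aut E)^2\rtimes S_2$, giving $24^2\cdot 2=1152$ in characteristic~$2$; the only slips are that Torelli gives $\aut(\jac C,\lambda)\cong\aut C$ (not $\aut C\times\{\pm1\}$) for hyperelliptic $C$, and that in characteristic $\le 5$ genus-$2$ automorphism groups can exceed $48$, but both corrections stay well below $1152$.
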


\begin{proof}
The size of the automorphism group of a principally-polarized abelian surface 
over a finite field is bounded by $1152$ (by $72$, if the characteristic is 
greater than $5$); for Jacobians, this follows from Igusa's enumeration of the
possible automorphism groups~\cite[\S8]{Igusa1960}, and for products of 
polarized elliptic curves and for restrictions of scalars of elliptic curves it 
is an easy exercise. (We know from 
\cite[Thm.~3.1, p.~270]{GonzalezGuardiaEtAl2005} that every 
principally-polarized abelian surface is of one of these three types.)
By~\cite[Lemma~7.2, pp.~85--86]{BrockGranville2001}, the number of 
$\fq$-rational forms of such a polarized surface is bounded by this same number.
\end{proof}

With these preliminaries out of the way, we may proceed to the proof of 
Proposition~\ref{propss}.  The proof splits into cases, depending on whether or
not the base field is a prime field.  First we consider the case where $q$
ranges over the set of primes~$p$.

We may assume that $p>3$.  In that case, we see from Table~\ref{tab1} that 
there is only one isogeny class of supersingular elliptic curves, the isogeny
class $\cali(\ff_p,0)$ of trace-$0$ curves, which consists of $1$ or $2$ strata.

Pick a trace-$0$ elliptic curve $E_0/\ff_p$ whose endomorphism ring has
discriminant $-4p$.  If $(A,\lambda)$ is a principally-polarized abelian 
surface over $\ff_p$ with $A$ isogenous to $E_0^2$, then either $A$ is a
product of elliptic curves with the product polarization, or $A$ is the 
restriction of scalars of an elliptic curve over $\ff_{p^2}$ with trace $-2p$,
or $A$ is the Jacobian of a curve~$C$.  
(See \cite[Thm.~3.1, p.~270]{GonzalezGuardiaEtAl2005}.)
The number of elliptic curves in $\cali(\ff_p,0)$ is $H(-4p)$; using 
Lemma~\ref{lemclassnumber} we see that the number of products of such elliptic 
curves is $\ll p (\log p)^2 (\log \log p)^4 \ll p^2$ for all primes~$p$. The 
number of supersingular elliptic curves over $\ff_{p^2}$ with trace $-2p$ is
equal to the number of supersingular $j$-invariants, which is is $p/12 + O(1)$;
therefore the number of restrictions of scalars of such curves is $\ll p$. 
Thus, we may focus our attention on the case where $(A,\lambda)$ is the 
Jacobian of a curve~$C$.

In this case, we know from Lemma~\ref{lemmapdegree} that $C$ has a map of 
degree at most $\sqrt{2p}<p$ to $E_0$, so $C$ has a \emph{minimal} map of 
degree at most $p$ to a curve $E$ in $\cali(\ff_p,0)$. We see that the 
polarized variety $(A,\lambda)$ can be obtained by gluing together two elliptic 
curves $E$ and $E'$ in $\cali(\ff_p,0)$ along their $n$-torsion, for some
$n < p$. It follows that the $a$-number of $A$ is~$2$, so $A$ is superspecial.
By \cite[Rem.~3, p.~41]{IbukiyamaKatsura1994}, the number of 
principally-polarized superspecial abelian surfaces over $\fpbar$ which admit
a model over $\ff_p$ is $\ll  p h(-p),$ which in turn is 
$\ll p^{3/2}(\log p) \abs{\log\log p}$ by Lemma~\ref{lemclassnumber}. By 
Lemma~\ref{lemforms}, we get the same bound for the number of superspecial
curves over~$\ff_p$. This shows that Proposition~\ref{propss} holds as $q$ 
ranges over the set of primes.

Now we let $q$ range over the set of proper prime powers.  Let $q = p^e$ for 
some prime $p$ and $e>1$. First we bound the number of principally-polarized
superspecial split surfaces.

By \cite[Thm.~2, p.~41]{IbukiyamaKatsura1994}, the total number of superspecial
curves over $\fqbar$ is equal to the class number $H_2(1,p)\le p^2/4$ mentioned
above, so by Lemma~\ref{lemforms} there are at most $1152 p^2/4 = 288 p^2$
superspecial curves over~$\fq$. Similarly, the number of supersingular
$j$-invariants is $p/12+O(1)$, so the number of distinct products of polarized 
supersingular elliptic curves over $\fqbar$ is also bounded by a constant
times~$p^2$; by Lemma~\ref{lemforms}, this shows that the number of 
principally-polarized superspecial split abelian surfaces over $\fq$ that are
not Jacobians is $\ll p^2$. Since $q\ge p^2$, the number of 
principally-polarized superspecial split surfaces is~$\ll q$.

We are left with the task of estimating the number of non-superspecial
supersingular split curves over $\fq$.  To do this, we appeal to a moduli space 
argument.  As noted above, the coarse moduli space of non-superspecial 
supersingular curves is geometrically a union of  $p^2/2880 + O(p)$ components,
each one an open subvariety of $\affine^1$.  Thus, the number of $\fq$-rational
points on this moduli space is at most $p^2 q/2800 + O(pq)$. By 
Lemma~\ref{lemforms}, each rational point on the moduli space corresponds
to at most $1152$ curves over $\fq$, so there are $\ll p^2 q \ll q^2$ 
principally-polarized supersingular split abelian surfaces over~$\fq$.  
\qed

\section{A lower bound for the number of split surfaces}
\label{seclowerbound}

In this section we prove Proposition~\ref{proplowerbound}.

Let $\ell$ be a prime coprime to $q$.  We say that two elliptic curves $E$ and 
$F$ over $\fq$ are \emph{of the same symplectic type modulo $\ell$} if (in the 
notation of Section~\ref{secgluing}) the set $\isom^{1}(E[\ell],F[\ell])$ is
nonempty; that is, if there is an isomorphism $E[\ell]\to F[\ell]$ of group 
schemes that respects the Weil pairing.  Clearly, if $E$ and $F$ have the same 
symplectic type modulo $\ell$ then their traces of Frobenius are congruent 
modulo $\ell$, so for each residue class modulo~$\ell$, the elliptic curves
whose traces lie in that residue class are distributed among some number of 
symplectic types.

\begin{lemma}
\label{lemtypes}
Let $\ell$ be an odd prime coprime to $q$ and let $a\in\integ/\ell$.
\begin{alphabetize}
\item If $a^2 \not\equiv 4q \bmod \ell$ then all elliptic curves $E/\fq$
      with $a(E) \equiv a\bmod \ell$ are of the same symplectic type.
\item If $a^2 \equiv 4q \bmod \ell$, there are at most three
      symplectic types of elliptic curves with trace congruent to $a$.
      If we fix an $\ell$th root of unity $\zeta\in\fqbar$, 
      these three types are determined as follows\textup{:}
      \begin{itemize}
      \item[1.] Those $E$ for which Frobenius acts as an integer on $E[\ell]$.
      \item[2.] Those $E$ for which Frobenius does not act as an integer
            on $E[\ell]$, and for which the Weil pairing $e(P,\frob_E(P))$
            is of the form $\zeta^x$ with $x\in(\integ/\ell)\units$ a square
            for all $P\in E[\ell](\fqbar)$ with $\frob_E(P)\ne (a/2)P$.
      \item[3.] Those $E$ for which Frobenius does not act as an integer
            on $E[\ell]$, and for which the Weil pairing $e(P,\frob_E(P))$
            is of the form $\zeta^x$ with $x\in(\integ/\ell)\units$ a nonsquare
            for all $P\in E[\ell](\fqbar)$ with $\frob_E(P)\ne (a/2)P$.
      \end{itemize}
\end{alphabetize}      
\end{lemma}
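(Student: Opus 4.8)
The plan is to analyze the action of $\frob_E$ on $E[\ell](\fqbar)\cong(\integ/\ell)^2$, thought of as a matrix $\gamma\in\gl_2(\integ/\ell)$ with $\tr\gamma\equiv a$ and $\det\gamma\equiv q\bmod\ell$, and to recall that two curves $E$ and $F$ have the same symplectic type modulo $\ell$ exactly when the corresponding matrices $\gamma_E$ and $\gamma_F$ are conjugate by an element of $\operatorname{SL}_2(\integ/\ell)$ (not merely $\gl_2$): an isomorphism $E[\ell]\to F[\ell]$ respecting the Weil pairing is precisely a symplectic change of basis carrying $\gamma_E$ to $\gamma_F$, and for a rank-$2$ symplectic module ``symplectic'' means ``determinant $1$''. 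So the question reduces to counting $\operatorname{SL}_2(\integ/\ell)$-conjugacy classes of matrices over the field $\ff_\ell$ with prescribed trace $a$ and determinant $q$.

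For part (a), when $a^2\not\equiv 4q\bmod\ell$ the characteristic polynomial $T^2-aT+q$ has distinct roots in $\bar\ff_\ell$, so $\gamma$ is regular semisimple; over a field, all matrices with a fixed separable characteristic polynomial form a single $\gl_2$-conjugacy class, and since the centralizer of such a $\gamma$ contains elements of every determinant (it is either a split or nonsplit torus, whose norm map to $\ff_\ell\units$ is surjective), the $\gl_2$-class does not break up into more than one $\operatorname{SL}_2$-class. Hence there is a unique symplectic type. For part (b), when $a^2\equiv 4q\bmod\ell$ the characteristic polynomial is $(T-a/2)^2$, and $\gamma$ is either the scalar $(a/2)I$ — which is the case where $\frob_E$ acts as the integer $a/2$ on $E[\ell]$, giving type $1$ — or a non-semisimple matrix conjugate to a single Jordan block $J=(a/2)I+N$. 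For the Jordan block: its $\gl_2$-centralizer is $\{xI+yN:x\in\ff_\ell\units,y\in\ff_\ell\}$, which has determinant $x^2$, so the image of the centralizer's determinant map is exactly the squares in $\ff_\ell\units$, an index-$2$ subgroup; therefore the single $\gl_2$-class of Jordan blocks splits into exactly $[\ff_\ell\units:(\ff_\ell\units)^2]=2$ classes under $\operatorname{SL}_2$. That gives at most three symplectic types total, matching the statement, and the invariant separating the two non-scalar types is which coset of squares the ``off-diagonal'' conjugation parameter lands in.

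To match the concrete description in the statement in terms of the Weil pairing $e(P,\frob_E(P))$, I would compute, for a non-scalar $\gamma=(a/2)I+N$ in a symplectic basis $P,P'$ with $e(P,P')=\zeta$, the pairing $e(P,\frob_E(P))$ for a vector $P$ with $\frob_E(P)\ne(a/2)P$ — i.e. $NP\ne 0$. Writing $NP'=0$ and $NP=c P'$ for the scalar $c$ giving the nilpotent part in this basis, one gets $\frob_E(P)=(a/2)P+cP'$, so $e(P,\frob_E(P))=e(P,cP')=\zeta^{c}$; and the class of $c$ in $\ff_\ell\units/(\ff_\ell\units)^2$ is exactly the $\operatorname{SL}_2$-conjugacy invariant identified above (conjugating by $\operatorname{diag}(t,t\inv)$ rescales $c$ by $t^2$). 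One should check this is independent of the choice of such $P$ (any other such $P$ differs by a multiple of $P'$ plus a nonzero scalar, and a short bilinearity computation shows $e(P,\frob_E(P))=\zeta^{c}$ is unchanged up to the ambiguity already present), which pins down types $2$ and $3$ as stated. The main obstacle I expect is this last bookkeeping: getting the Weil-pairing formula and the square/nonsquare dichotomy to come out in precisely the normalization the authors use (including handling the fixed choice of $\zeta$ and verifying well-definedness over all admissible $P$), rather than the group-theoretic count, which is routine.
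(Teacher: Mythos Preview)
Your proposal is correct and follows essentially the same route as the paper: both arguments reduce to computing the image of the determinant (equivalently, the multiplier map $m\colon\aut E[\ell]\to\aut\mmu_\ell$) on the centralizer of Frobenius in $\gl_2(\ff_\ell)$, finding it surjective in the regular semisimple and scalar cases and equal to the squares in the Jordan-block case. Your reformulation via $\operatorname{SL}_2$-conjugacy classes is equivalent to the paper's multiplier-map language, and you actually carry out the Weil-pairing verification of the square/nonsquare dichotomy for types~2 and~3 more explicitly than the paper does.
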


\begin{corollary}
\label{cortypes}
\pushQED{\qed}
For each odd $\ell$ coprime to $q$,
there are at most $\ell+4$ symplectic types of
elliptic curves modulo $\ell$ over~$\fq$.\qedhere
\popQED
\end{corollary}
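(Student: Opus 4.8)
The plan is to deduce the corollary directly from Lemma~\ref{lemtypes} by summing, over the $\ell$ residue classes modulo~$\ell$, the number of symplectic types whose members have trace of Frobenius in that class. First I would record the elementary fact that, since $\ell$ is odd and coprime to~$q$, the residue $4q$ is a unit modulo~$\ell$, so the polynomial $X^2 - 4q$ has at most two roots in $\integ/\ell$; it has exactly two when $4q$ is a nonzero square modulo~$\ell$ and none otherwise. In particular there are at most two residue classes $a\in\integ/\ell$ with $a^2\equiv 4q\bmod\ell$.

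Next I would partition the residue classes into these (at most two) \emph{special} classes and the remaining \emph{generic} classes. By part~(a) of Lemma~\ref{lemtypes}, all elliptic curves over~$\fq$ whose trace of Frobenius lies in a fixed generic class form a single symplectic type, so each generic class accounts for at most one symplectic type. By part~(b) of Lemma~\ref{lemtypes}, each special class accounts for at most three symplectic types. Since every symplectic type determines a single residue class (namely the common trace of its members modulo~$\ell$), summing these contributions bounds the total number of symplectic types.

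Carrying out the sum: if there are exactly two special classes, we get at most $(\ell-2)\cdot 1 + 2\cdot 3 = \ell+4$ types; if there are none, we get at most $\ell\le\ell+4$; and the count can only go down if some residue class is not realized by any curve over~$\fq$. In every case the bound $\ell+4$ holds, which is the assertion of the corollary.

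I do not expect any genuine obstacle here: all of the substance is contained in Lemma~\ref{lemtypes}, and what remains is the standard observation that a quadratic congruence modulo an odd prime has at most two solutions. The only point worth stating explicitly is why the degenerate case $4q\equiv 0\bmod\ell$ is excluded, which is precisely the hypothesis that $\ell$ is odd and coprime to~$q$.
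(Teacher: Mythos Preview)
Your proposal is correct and is exactly the intended argument: the paper states the corollary with an immediate \textsc{qed} and no separate proof, because the count $(\ell-2)\cdot 1 + 2\cdot 3 = \ell+4$ follows at once from parts~(a) and~(b) of Lemma~\ref{lemtypes} together with the observation that $X^2\equiv 4q\bmod\ell$ has at most two solutions when $\ell$ is an odd prime not dividing~$q$.
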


\begin{proof}[Proof of Lemma~\textup{\ref{lemtypes}}]
Let $E$ be an elliptic curve over $\fq$ and let $G$ be the automorphism group
of $E[\ell]$. In Section~\ref{secgluing} we defined a map 
$m\colon G\to\aut\mmu_\ell$.  If $a^2 \not\equiv 4q \bmod \ell$ then $m$ is 
surjective, so there is an isometry between $E[\ell]$ and $F[\ell]$ for any 
two curves $E$ and $F$ of trace $a$. Likewise, if Frobenius acts as a constant
on $E[\ell]$ then $m$ is surjective, so if Frobenius acts as $a/2$ on $E[\ell]$
and $F[\ell]$ then there is an isometry between those two group schemes.  

On the other hand, if Frobenius does not act semisimply then the image of $m$
is a coset of a subgroup of index $2$, that is, a coset of the subgroup of 
squares, and is an isometry between $E[\ell]$ and $F[\ell]$ for two such curves
$E$ and $F$ if and only if the image of $m$ is the same for both of them.
\end{proof}

\begin{lemma}
\label{lemminustypes}
Let $\ell$ be a prime coprime to $q$ and with $\ell\equiv 1\bmod 4$.  If two
elliptic curves $E$ and $F$ over $\fq$ have the same symplectic type 
modulo~$\ell$, then there are at least $\ell-1$ elements of 
$\isom\inv(E[\ell], F[\ell]).$
\end{lemma}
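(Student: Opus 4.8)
The plan is to convert the count of anti-isometries into a count of automorphisms of $E[\ell]$ with a prescribed determinant, and then to finish with a short case analysis on the action of Frobenius on $E[\ell]$.

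First I would set up the reduction. Since $E$ and $F$ have the same symplectic type modulo $\ell$, the set $\isom^1(E[\ell],F[\ell])$ is nonempty by definition; fix an element $\eta_1$ of it. Because $\ell$ is coprime to $q$, the group scheme $E[\ell]$ is determined by its geometric points together with the Frobenius action, hence by the Frobenius matrix $\gamma\in\gl_2(\integ/\ell)$ acting on $E[\ell](\fqbar)\iso(\integ/\ell)^2$, which has $\det\gamma=q$. Composition with $\eta_1$ is a bijection $\aut E[\ell]\to\isom(E[\ell],F[\ell])$, $\alpha\mapsto\eta_1\comp\alpha$, and for the map $m$ of Section~\ref{secgluing} one has $m(\eta_1\comp\alpha)=m(\eta_1)\,m(\alpha)=m(\alpha)$. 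Since any automorphism of $E[\ell]$ scales the Weil pairing by its determinant, $m(\alpha)=\det\alpha$ in $(\integ/\ell)\units$; moreover $\aut E[\ell]$ is the centralizer $Z$ of $\gamma$ in $\gl_2(\integ/\ell)$. Hence
\[
\#\isom\inv(E[\ell],F[\ell])=\#\st{\alpha\in Z:\det\alpha=-1},
\]
and it suffices to bound this last quantity below by $\ell-1$.

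Then I would run through the four conjugacy classes of $\gamma$ over the field $\integ/\ell$. If $\gamma$ is scalar, then $Z=\gl_2(\integ/\ell)$ and $\st{\det=-1}$ is a coset of $\ker\det$, of size $\ell(\ell^2-1)$. If $\gamma$ has two distinct eigenvalues in $\integ/\ell$, then in a suitable basis $Z$ is a split torus $\iso(\integ/\ell)\units\times(\integ/\ell)\units$ with $\det(x,y)=xy$, and $\st{\det=-1}$ has size $\ell-1$. If $\gamma$ has irreducible characteristic polynomial, then $Z\iso\ff_{\ell^2}\units$ with $\det$ equal to the norm to $\ff_\ell\units$, and $\st{\det=-1}$ has size $\ell+1$. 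Finally, if $\gamma$ is not semisimple, write $\gamma=\lambda I+N$ with $N$ a nonzero nilpotent matrix; then $Z=\st{xI+yN:x\in(\integ/\ell)\units,\ y\in\integ/\ell}$ and $\det(xI+yN)=x^2$, so $\st{\det=-1}$ has size $2\ell$ if $-1$ is a square modulo $\ell$ and is empty otherwise. Because $\ell\equiv1\bmod4$, $-1$ is a square modulo $\ell$, and therefore in all four cases $\#\st{\alpha\in Z:\det\alpha=-1}\ge\ell-1$.

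The one real obstacle is the non-semisimple case: it is the only place the hypothesis $\ell\equiv1\bmod4$ enters, and I would be careful to confirm that this case can actually occur for pairs $(E,F)$ of the same symplectic type and that $\det(xI+yN)=x^2$. Everything else is routine linear algebra over $\integ/\ell$, and the reduction in the second paragraph is a direct torsor computation; note that the bound $\ell-1$ is sharp, being attained in the split semisimple case.
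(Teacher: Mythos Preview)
Your proof is correct. It differs from the paper's in that you carry out an explicit case analysis on the conjugacy type of Frobenius in $\gl_2(\integ/\ell)$, computing the fiber $\{\det=-1\}$ in the centralizer exactly in each case; by contrast, the paper argues more uniformly. The paper first observes that, since $\ell\equiv1\bmod4$, there is an integer $b$ with $b^2\equiv-1\bmod\ell$, and scalar multiplication by $b$ on $E[\ell]$ turns the given isometry $\eta$ into an anti-isometry $b\eta$, so $\isom\inv(E[\ell],F[\ell])$ is nonempty. Then, since this set is a coset of $\isom^1(E[\ell],E[\ell])=\ker m$, its size equals $\#\aut E[\ell]/\#\operatorname{im}m$, and the paper invokes the general lower bound $\#\aut E[\ell]\ge(\ell-1)^2$ from Proposition~\ref{propautsize} together with $\#\operatorname{im}m\le\ell-1$ to conclude. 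Your argument is more self-contained and yields the exact count in each case (and in particular confirms sharpness at $\ell-1$), while the paper's is shorter because it recycles the earlier automorphism bound rather than re-deriving the centralizer structure. Your worry about whether the non-semisimple case actually occurs is not relevant to the validity of the proof: you only need to cover all possible types of $\gamma$, not to verify that each type is realized.
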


\begin{proof}
Since $E$ and $F$ have the same symplectic type modulo $\ell$ there is an 
isometry $\eta\colon E[\ell]\to F[\ell]$. Let $b$ be an integer with 
$b^2 \equiv -1\bmod\ell$. Then $b\eta$ is an anti-isometry, so
$\isom\inv(E[\ell],F[\ell])$ is nonempty. From Proposition~\ref{propautsize}
we know that $\#\aut E[\ell] \ge (\ell-1)^2$, so $\#\isom^{1}(E[\ell],E[\ell])$
is at least $\ell-1$, and it follows that there are at least this many elements
of $\isom\inv(E[\ell],F[\ell])$.
\end{proof}

\begin{proof}[Proof of Proposition~\textup{\ref{proplowerbound}}]
Let $c$ be a constant such that 
\[
H(\Delta) < c \abs{\Delta}^{1/2} \log\abs{\Delta} (\log\log\abs{\Delta})^2
\]
for all negative discriminants $\Delta$; such a constant exists by 
Lemma~\ref{lemclassnumber}.  We will show that for every prime $\ell \ne p$ 
with $\ell\equiv1\bmod 4$ and with
\begin{equation}
\label{eqellbound}
\ell < \frac{q^{1/2}}{1600 c^2 (\log q)^2(\log\log q)^4}
\end{equation}
there are more than $2 q^2/5 $ triples $(E_1,E_2,\eta)$, where $E_1$ and $E_2$ 
are nonisogenous ordinary elliptic curves over $\fq$ and 
$\eta\colon E_1[\ell]\to E_2[\ell]$ is an anti-isometry.  Dirichlet's theorem 
shows that there are constants $c'\ge 13, c'' > 0$ such that when $q\ge c'$ the 
number of such primes $\ell$ is at least
\[
\frac{c'' q^{1/2}}{(\log q)^3(\log\log q)^4},
\]
so for $q\ge c'$ we will have at least
\[
\frac{c'' q^{5/2}}{5 (\log q)^3 (\log\log q)^4}
\]
distinct principally-polarized abelian surfaces, thus proving the unconditional
part of Proposition~\ref{proplowerbound}.

Let $\ell$ be a prime as above, let $t\le \ell+4$ be the number of symplectic
types of curves modulo~$\ell$, and let $S_1,\ldots,S_t$ be the sets of ordinary 
curves of the $t$ different symplectic types. We would like to count the number 
of pairs of curves $(E_1, E_2)$ where $E_1$ and $E_2$ are not isogenous to one 
another but are of the same symplectic type. The number of ordered pairs 
$(E_1,E_2)$ where $E_1$ and $E_2$ are of the same type is 
$\sum_{i=1}^t (\#S_i)^2.$  This sum is minimized when the elliptic curves are 
evenly distributed across the symplectic types.  It is easy check that when 
$q\ge 13$ there are always at least $5q/3$ ordinary elliptic curves over~$\fq$, 
so we see that
\[
\sum_{i=1}^t (\#S_i)^2 \ge t \left(\frac{5q}{3t}\right)^2 
\ge \frac{25q^2}{9(\ell+4)}
\ge  \frac{125 q^2}{81 \ell} > \frac{3 q^2}{2 \ell}.
\]

On the other hand, the number of ordered pairs $(E_1,E_2)$ of ordinary elliptic
curves that are isogenous to one another is
\[
\sum_{\substack{-2\sqrt{q}< a < 2\sqrt{q}\\ \gcd(a,q)=1}} H(a^2-4q)^2.
\]
Using Lemma~\ref{lemclassnumber} and the definition of $c$, we see that each
summand is at most
\[
 c^2 (4q)(\log (4q))^2 (\log \log (4q))^4
 < 400 c^2  q (\log q)^2 (\log \log q)^4.
 \]
so the number of such ordered pairs is at most
\[
1600 c^2 q^{3/2} (\log q)^2 (\log \log q)^4 \le q^2 / \ell.
\]
Thus, the number of ordered pairs $(E_1,E_2)$ of nonisogenous curves that have
the same symplectic type is at least $(1/2)(q^2/\ell)$. By 
Lemma~\ref{lemminustypes}, this gives us more than  
$(1/2)(\ell-1) q^2/ \ell > 2q^2/5$ triples $(E_1,E_2,\eta)$ where $E_1$ and
$E_2$ are nonisogenous ordinary elliptic curves and 
$\eta\colon E_1[\ell]\to E_2[\ell]$ is an anti-isometry, as we wanted.

If the generalized Riemann hypothesis holds, we modify our argument as 
follows.   We take $c$ to be a constant such that
\[
H(\Delta) < c \abs{\Delta}^{1/2}(\log\log\abs{\Delta})^3
\]
for all negative discriminants $\Delta$, and consider primes 
$\ell\equiv 1\bmod 4$ bounded by
\[
\ell < \frac{q^{1/2}}{1600 c^2 (\log\log q)^6}
\]
instead of by~\eqref{eqellbound}.  Again we find that for each such $\ell$ we
have more than $2 q^2/5$ triples $(E_1,E_2,\eta)$, where $E_1$ and $E_2$ are 
nonisogenous ordinary elliptic curves and $\eta\colon E_1[\ell]\to E_2[\ell]$
is an anti-isometry. Dirichlet's theorem then leads to the desired estimate
for~$W_q$.
\end{proof}

\section{Numerical data, evidence for Conjecture~\ref{conjabsurf}, and further
directions}
\label{secdata}

In this section we present summaries of some computations that help give some 
indication of the behavior of several of the quantities that we study and
provide bounds for, and we give some evidence that seems to support 
Conjecture~\ref{conjabsurf}. We close with some thoughts about possible 
extensions of our results.

\subsection{The sum of the relative conductors}
\label{ssec-data-relcond}

In Section~\ref{seconi} we proved Proposition~\ref{proponi}, which gives an 
upper bound on the number of principally-polarized ordinary split nonisotypic
abelian surfaces over a finite field $\fq$. The key to the argument is 
Lemma~\ref{lemboundsumrelcond}, which gives an upper bound for the sum of the
relative conductors of the ordinary elliptic curves over~$\fq$. The lemma shows
that there is a constant $c$ such that for all $q$ this sum is at most 
$c q (\log q)^2$.  However, we suspect that the sum of the relative conductors
grows more slowly than this; it is perhaps even~$O(q)$. 

We computed this sum for all prime powers $q$ less than~$10^7$.
For $q$ in the range $(10^3,10^4)$, the sum lies between $2.07 q$ and $4.27 q$; 
for $q$ in the range $(10^4,10^5)$, the sum lies between $2.14 q$ and $3.95 q$; 
for $q$ in the range $(10^5,10^6)$, the sum lies between $2.09 q$ and $3.82 q$; and 
for $q$ in the range $(10^6,10^7)$, the sum lies between $2.10 q$ and $3.77 q$.
Note that as $q$ ranges through these successive intervals, the upper bound on
$1/q$ times the sum of the relative conductors decreases; this is why we are
tempted to suspect that the sum of the relative conductors is~$O(q)$.

\subsection{The probability that a principally-polarized abelian surface is split}
\label{ssec-data-probsplit}

If $S$ is a finite collection of geometric objects having finite automorphism 
groups, we define the \emph{weighted cardinality} $\#' S$ of $S$ by
\[
\#' S = \sum_{s \in S} \oneover{\# \aut s}.
\]
It is well-known that the weighted cardinality can lead to cleaner formulas 
than the usual cardinality. For instance, the weighted cardinality of the set 
of genus-$2$ curves over $\fq$ is equal to $q^3$
(\cite[Prop.~7.1, p.~87]{BrockGranville2001}). A principally-polarized abelian 
surface over a field is either a Jacobian, a product of polarized elliptic 
curves, or the restriction of scalars of a polarized elliptic curve over a 
quadratic extension of the base field 
(\cite[Thm.~3.1, p.~270]{GonzalezGuardiaEtAl2005}). One can show that the
weighted cardinality of the set of products of polarized elliptic curves over 
$\fq$ is $q^2/2$, as is the weighted cardinality of the set of restrictions of
scalars. Thus, if we let $\cala_2$ denote the moduli stack of 
principally-polarized abelian surfaces, then $\#'\cala_2(\fq) = q^3 + q^2$.

For each prime power $q$ we let
\[
c_q = \frac{\sqrt{q} \cdot \#'\cala_{2,\ssplit}(\fq)}{\#'\cala_2(\fq)}
    = \frac{\sqrt{q} \cdot \#'\cala_{2,\ssplit}(\fq)}{q^3 + q^2}.
\]    
For all primes $q<300$ and for $q = 521$ we computed the exact value of $c_q$
by direct enumeration of curves and computation of zeta functions. For 
$q\in \st{1031, 2053, 4099, 16411, 65537}$ (the smallest primes greater 
than $2^i$ for $i=10, 11, 12, 14, 16$) we computed approximations to $c_q$ by 
randomly sampling genus-$2$ curves (with probability inversely proportional to 
their automorphism groups), and then adjusting the probabilities to account for
the non-Jacobians.  We computed enough examples for each of these $q$ to 
determine $c_q$ with a standard deviation of less $0.0005$.  The result of the
computations is displayed in Figure~\ref{figprobsplit}; the (almost invisible) 
error bars on the rightmost five data points indicate the standard deviation.
Note that the horizontal axis is $\log \log q$; even so, the graph looks 
sublinear.  This encourages us to speculate that perhaps the $c_q$ are bounded
away from $0$ and $\infty$.

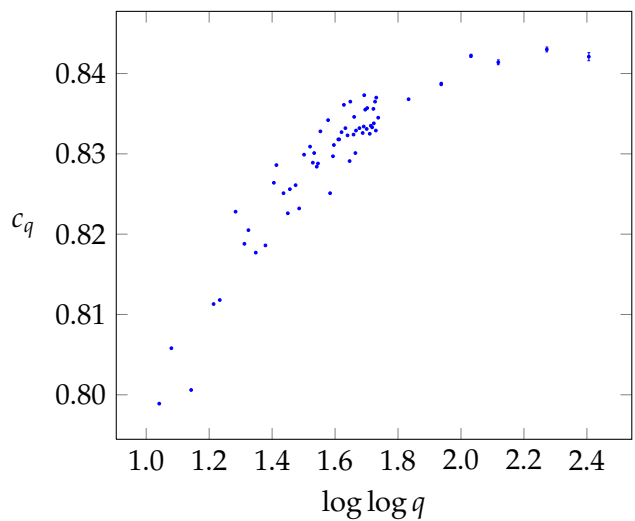
\begin{figure}
\centering
\begin{tikzpicture}
   \begin{axis}[xlabel=$\log \log q$,
                ylabel=$c_q$, 
                ylabel style={rotate=270},
                xtick={1, 1.2, 1.4, 1.6, 1.8, 2, 2.2, 2.4},
                xticklabels={$1.0$,$1.2$,$1.4$,$1.6$,$1.8$,$2.0$,$2.2$,$2.4$},
                ytick={0.8, 0.81, 0.82, 0.83, 0.84},
                yticklabels={$0.80$,$0.81$,$0.82$,$0.83$,$0.84$},
                mark size={0.1ex}]
   \addplot[color=blue, only marks,
            error bars/.cd,
            y dir=both, y explicit,
             ]
     coordinates {
( 1.0414, 0.7989)
( 1.0799, 0.8058)
( 1.1428, 0.8006)
( 1.2141, 0.8113)
( 1.2337, 0.8118)
( 1.2840, 0.8228)
( 1.3120, 0.8188)
( 1.3247, 0.8205)
( 1.3481, 0.8177)
( 1.3788, 0.8186)
( 1.4055, 0.8264)
( 1.4136, 0.8286)
( 1.4362, 0.8251)
( 1.4499, 0.8226)
( 1.4564, 0.8256)
( 1.4746, 0.8261)
( 1.4859, 0.8232)
( 1.5015, 0.8299)
( 1.5205, 0.8309)
( 1.5293, 0.8289)
( 1.5336, 0.8301)
( 1.5418, 0.8284)
( 1.5457, 0.8288)
( 1.5534, 0.8328)
( 1.5778, 0.8342)
( 1.5842, 0.8251)
( 1.5933, 0.8297)
( 1.5962, 0.8311)
( 1.6102, 0.8318)
( 1.6129, 0.8318)
( 1.6206, 0.8327)
( 1.6280, 0.8361)
( 1.6328, 0.8332)
( 1.6396, 0.8323)
( 1.6462, 0.8291)
( 1.6484, 0.8365)
( 1.6587, 0.8324)
( 1.6606, 0.8346)
( 1.6645, 0.8301)
( 1.6664, 0.8329)
( 1.6774, 0.8332)
( 1.6877, 0.8326)
( 1.6910, 0.8334)
( 1.6926, 0.8373)
( 1.6958, 0.8355)
( 1.7005, 0.8331)
( 1.7020, 0.8357)
( 1.7101, 0.8325)
( 1.7136, 0.8335)
( 1.7178, 0.8333)
( 1.7218, 0.8356)
( 1.7231, 0.8338)
( 1.7270, 0.8365)
( 1.7296, 0.8329)
( 1.7308, 0.8370)
( 1.7370, 0.8345)
( 1.8335, 0.8368)
( 1.9371, 0.8387)  +- (0,0.0002)
( 2.0317, 0.8422)  +- (0,0.0002)
( 2.1185, 0.8414)  +- (0,0.0003)
( 2.2727, 0.8430)  +- (0,0.0003)
( 2.4061, 0.8421)  +- (0,0.0005)
        };
  \end{axis}
\end{tikzpicture}
\caption{The values of $c_q$ for the primes $q$ with $17\le q \le 293$, 
         together with $q\in \st{521, 1031, 2053, 4099, 16411, 65537}$. 
         The values of $c_q$ for the five largest $q$ were computed
         experimentally; the error bars indicate one standard deviation.}
\label{figprobsplit}
\end{figure}

\subsection{Reductions of a fixed surface}
\label{ssec-data-reduction}

Let $A/K$ be a principally-polarizable abelian surface over a number field such 
that the absolute endomorphism ring $\End_{\bar K} A$ is isomorphic to $\integ$,
and recall the counting function $\pi_\ssplit(A/K,z)$ introduced 
Section~\ref{secintro}. Conjecture~\ref{conjabsurf} states that  
$\pi_\ssplit(A/K,z)\sim C_A \sqrt z/\log z$; we tested this against actual data
on the splitting behavior of a particular surface $A$ over~$\rat$.

Let $A$ be the Jacobian of the curve over $\rat$ with affine model 
$y^2 = x^5 + x + 6$. Using the methods of~\cite{HarveySutherland2014}, Andrew 
Sutherland computed for us the primes $p<2^{30}$ for which the mod-$p$ reduction
of $A$ is split, thereby giving us the exact value of $\pi_\ssplit(A/\rat,z)$
for all $z\le 2^{30}$. We numerically fit curves of the form 
$a\sqrt{z}/(\log z)^b$ and of the form $c \sqrt z/\log z$ to this function.
For curves of the form $a\sqrt{z}/(\log z)^b$, the best-fitting exponent $b$
was $b \approx 1.02269$, reasonably close to our conjectural value of~$1$. 
For curves of the form $c \sqrt z/\log z$, the best-fitting constant $c$ was
$c \approx 4.4651$. In Figure~\ref{figbig} we present the actual data (in blue)
alongside the best-fitting function $c \sqrt z/\log z$ (in red); the figure 
shows that the idealized function is in close agreement with the actual 
function.

\begin{figure}
\centering
\begin{tikzpicture}
   \begin{axis}[width=0.7\hsize,
                xlabel=$z$,
                ylabel=${\pi_\ssplit(A/\rat,z)}$, 
                ylabel style={rotate=270},
                ylabel shift={1ex},
                scaled ticks=false,
                mark size={0.05ex}]
   \addplot[color=blue]
     coordinates {
(         3,   1) (       149,  10) (       829,  19) (      1877,  28) (      4157,  37) (      5741,  46) (     12203,  55) (     19447,  64) (     23131,  73) (     34763,  82) (     41231,  91) (     52501, 100) (     64301, 109) (     74597, 118) (     84467, 127) (    104393, 136) (    122011, 145) 
(    148501, 154) (    165719, 163) (    195743, 172) (    216061, 181) (    239753, 190) (    266587, 199) (    285289, 208) (    316817, 217) (    343169, 226) (    368287, 235) (    398407, 244) (    435779, 253) (    486589, 262) (    502441, 271) (    562297, 280) (    618329, 289) (    675743, 298) 
(    740011, 307) (    801841, 316) (    861659, 325) (    920509, 334) (    999437, 343) (   1065133, 352) (   1151243, 361) (   1225109, 370) (   1288709, 379) (   1362511, 388) (   1487917, 397) (   1570007, 406) (   1640761, 415) (   1738493, 424) (   1803743, 433) (   1929467, 442) (   2053439, 451) 
(   2164807, 460) (   2227429, 469) (   2379929, 478) (   2499137, 487) (   2697151, 496) (   2816213, 505) (   3014051, 514) (   3119491, 523) (   3241559, 532) (   3383801, 541) (   3451681, 550) (   3623071, 559) (   3809171, 568) (   3958621, 577) (   4059619, 586) (   4261009, 595) (   4414313, 604) 
(   4623659, 613) (   4726069, 622) (   4851883, 631) (   5029411, 640) (   5198377, 649) (   5495299, 658) (   5635717, 667) (   5790857, 676) (   5908891, 685) (   6134533, 694) (   6258107, 703) (   6450049, 712) (   6780551, 721) (   6975091, 730) (   7168873, 739) (   7333049, 748) (   7453487, 757) 
(   7537003, 766) (   7886111, 775) (   8033591, 784) (   8186291, 793) (   8422571, 802) (   8592533, 811) (   8834627, 820) (   8995757, 829) (   9283171, 838) (   9444217, 847) (   9643813, 856) (   9907081, 865) (  10104139, 874) (  10276979, 883) (  10555553, 892) (  10683773, 901) (  10808939, 910) 
(  11116151, 919) (  11406401, 928) (  11795677, 937) (  12058477, 946) (  12205799, 955) (  12518399, 964) (  12760399, 973) (  12997441, 982) (  13349807, 991) (  13652209,1000) (  13951279,1009) (  14203139,1018) (  14383687,1027) (  14558543,1036) (  15039793,1045) (  15181207,1054) (  15341393,1063) 
(  15573653,1072) (  15923461,1081) (  16223939,1090) (  16546513,1099) (  16913009,1108) (  17177011,1117) (  17417689,1126) (  17612291,1135) (  17843083,1144) (  18268447,1153) (  18636119,1162) (  19059503,1171) (  19171013,1180) (  19635631,1189) (  19971563,1198) (  20217787,1207) (  20572781,1216) 
(  21052159,1225) (  21387851,1234) (  21721031,1243) (  22093843,1252) (  22513901,1261) (  22960799,1270) (  23272687,1279) (  23573569,1288) (  23972153,1297) (  24346897,1306) (  24950537,1315) (  25534573,1324) (  25827469,1333) (  26179691,1342) (  26483503,1351) (  26834771,1360) (  27130289,1369) 
(  27594613,1378) (  27910723,1387) (  28368511,1396) (  28645367,1405) (  29070397,1414) (  29430637,1423) (  29768113,1432) (  30474887,1441) (  30855211,1450) (  31213961,1459) (  31667551,1468) (  31913839,1477) (  32589911,1486) (  32926843,1495) (  33489791,1504) (  33782473,1513) (  34324187,1522) 
(  34836107,1531) (  35308543,1540) (  35712559,1549) (  36221329,1558) (  36742691,1567) (  37156907,1576) (  37467799,1585) (  37748363,1594) (  38166061,1603) (  38815363,1612) (  39284389,1621) (  39732419,1630) (  40107751,1639) (  40391101,1648) (  40689421,1657) (  41761003,1666) (  42246319,1675) 
(  42454897,1684) (  43223497,1693) (  43700561,1702) (  44175281,1711) (  44630479,1720) (  45249877,1729) (  45753481,1738) (  46176643,1747) (  46801031,1756) (  47284417,1765) (  48275021,1774) (  49072451,1783) (  49790353,1792) (  50363609,1801) (  50795461,1810) (  51300853,1819) (  51894131,1828) 
(  52300189,1837) (  52644743,1846) (  53047559,1855) (  53565191,1864) (  54087833,1873) (  54669661,1882) (  55269787,1891) (  55915099,1900) (  56563099,1909) (  57155573,1918) (  57813359,1927) (  58397231,1936) (  59199037,1945) (  59741119,1954) (  60410267,1963) (  61107317,1972) (  61499377,1981) 
(  62462689,1990) (  62873933,1999) (  64361903,2008) (  64930897,2017) (  65542193,2026) (  66103727,2035) (  66582689,2044) (  67838591,2053) (  68467571,2062) (  69105373,2071) (  70060577,2080) (  70746757,2089) (  71210473,2098) (  71919293,2107) (  72751639,2116) (  73269557,2125) (  73960343,2134) 
(  74707081,2143) (  75489179,2152) (  76505719,2161) (  77490529,2170) (  78513737,2179) (  79083373,2188) (  79590613,2197) (  80430809,2206) (  81450713,2215) (  82507913,2224) (  83110697,2233) (  83764169,2242) (  85068457,2251) (  85906757,2260) (  86566367,2269) (  87614047,2278) (  88066607,2287) 
(  89088893,2296) (  89639437,2305) (  90817999,2314) (  91280303,2323) (  91755607,2332) (  92686903,2341) (  93343259,2350) (  94028569,2359) (  94766059,2368) (  95810437,2377) (  96709441,2386) (  97470991,2395) (  98511337,2404) (  99624331,2413) ( 100651939,2422) ( 101451001,2431) ( 102505153,2440) 
( 103406783,2449) ( 104159009,2458) ( 105596479,2467) ( 106388587,2476) ( 107316553,2485) ( 108718507,2494) ( 110546207,2503) ( 111518633,2512) ( 111995921,2521) ( 112835659,2530) ( 113597579,2539) ( 114538097,2548) ( 115223473,2557) ( 116077537,2566) ( 117301529,2575) ( 117760259,2584) ( 119081393,2593) 
( 119822009,2602) ( 120589061,2611) ( 121314283,2620) ( 122324621,2629) ( 123180367,2638) ( 124052207,2647) ( 124948723,2656) ( 125845129,2665) ( 126698717,2674) ( 127512443,2683) ( 128441791,2692) ( 129486043,2701) ( 130318127,2710) ( 131346779,2719) ( 132178507,2728) ( 133307323,2737) ( 135000709,2746) 
( 135375803,2755) ( 136238807,2764) ( 137089597,2773) ( 137709331,2782) ( 138747467,2791) ( 140213473,2800) ( 141230987,2809) ( 142220383,2818) ( 143286037,2827) ( 143981881,2836) ( 144613837,2845) ( 145406663,2854) ( 146511539,2863) ( 147344941,2872) ( 148304071,2881) ( 149502943,2890) ( 150277187,2899) 
( 151079881,2908) ( 152614537,2917) ( 153523157,2926) ( 154753427,2935) ( 155887741,2944) ( 156485851,2953) ( 157052743,2962) ( 158161307,2971) ( 159094973,2980) ( 160163023,2989) ( 160818773,2998) ( 162187423,3007) ( 162779527,3016) ( 163688123,3025) ( 164719697,3034) ( 165913703,3043) ( 167013443,3052) 
( 167632363,3061) ( 169211687,3070) ( 170571179,3079) ( 172062001,3088) ( 173043929,3097) ( 174460969,3106) ( 175623191,3115) ( 176719303,3124) ( 177980287,3133) ( 179247799,3142) ( 180217963,3151) ( 181644647,3160) ( 182591657,3169) ( 184230413,3178) ( 185737073,3187) ( 186701447,3196) ( 187689347,3205) 
( 189350009,3214) ( 190707893,3223) ( 192066059,3232) ( 193469009,3241) ( 194015399,3250) ( 195229283,3259) ( 196497209,3268) ( 197340497,3277) ( 198615211,3286) ( 199458421,3295) ( 200498791,3304) ( 201280943,3313) ( 203534711,3322) ( 205077709,3331) ( 206530561,3340) ( 207823333,3349) ( 208854823,3358) 
( 210272399,3367) ( 211135789,3376) ( 212135051,3385) ( 212703653,3394) ( 213467627,3403) ( 214528637,3412) ( 215481187,3421) ( 216582419,3430) ( 217816063,3439) ( 219048439,3448) ( 220730491,3457) ( 222464701,3466) ( 223150981,3475) ( 224150299,3484) ( 225623927,3493) ( 227066291,3502) ( 228428729,3511) 
( 229919351,3520) ( 230839127,3529) ( 232680667,3538) ( 233371891,3547) ( 234809447,3556) ( 235576717,3565) ( 236075713,3574) ( 237482459,3583) ( 239137471,3592) ( 240936823,3601) ( 242003389,3610) ( 242948749,3619) ( 243317873,3628) ( 244379977,3637) ( 246559813,3646) ( 247895579,3655) ( 248580047,3664) 
( 249859609,3673) ( 252220091,3682) ( 253094581,3691) ( 254073763,3700) ( 254969567,3709) ( 257021209,3718) ( 257706913,3727) ( 259006691,3736) ( 259692337,3745) ( 261462917,3754) ( 262447631,3763) ( 263777039,3772) ( 265725179,3781) ( 267252959,3790) ( 269235271,3799) ( 270702161,3808) ( 271848007,3817) 
( 272809909,3826) ( 274054811,3835) ( 276292199,3844) ( 278061131,3853) ( 279545111,3862) ( 280760479,3871) ( 282548621,3880) ( 284452859,3889) ( 285590443,3898) ( 286797451,3907) ( 287631529,3916) ( 289232761,3925) ( 291180733,3934) ( 294235589,3943) ( 295445963,3952) ( 296667407,3961) ( 299440909,3970) 
( 301477447,3979) ( 302352047,3988) ( 304761857,3997) ( 306752363,4006) ( 308976389,4015) ( 309747853,4024) ( 311412053,4033) ( 313641919,4042) ( 315414613,4051) ( 316502567,4060) ( 318760649,4069) ( 320058839,4078) ( 321216349,4087) ( 323076401,4096) ( 324473153,4105) ( 325606003,4114) ( 327545243,4123) 
( 328783561,4132) ( 330096953,4141) ( 331338727,4150) ( 334028693,4159) ( 335683373,4168) ( 337622081,4177) ( 339361843,4186) ( 340641731,4195) ( 341680727,4204) ( 342489817,4213) ( 344111381,4222) ( 345455447,4231) ( 348627809,4240) ( 351007763,4249) ( 351596257,4258) ( 353630531,4267) ( 354308099,4276) 
( 355683649,4285) ( 357708853,4294) ( 359524541,4303) ( 360775879,4312) ( 361787851,4321) ( 363130219,4330) ( 365412083,4339) ( 367251449,4348) ( 370801481,4357) ( 372348029,4366) ( 374261011,4375) ( 376390163,4384) ( 377765669,4393) ( 379945609,4402) ( 380794619,4411) ( 382950433,4420) ( 384220769,4429) 
( 386930683,4438) ( 388338193,4447) ( 391237661,4456) ( 391846547,4465) ( 392892641,4474) ( 395608153,4483) ( 397487813,4492) ( 399833789,4501) ( 400815509,4510) ( 402206291,4519) ( 403991891,4528) ( 405355061,4537) ( 406986967,4546) ( 409712117,4555) ( 411342367,4564) ( 412640869,4573) ( 414224983,4582) 
( 416323751,4591) ( 418192903,4600) ( 419459611,4609) ( 420936671,4618) ( 423096211,4627) ( 424376549,4636) ( 425936939,4645) ( 428068673,4654) ( 429573253,4663) ( 432292901,4672) ( 435236773,4681) ( 437151479,4690) ( 439093019,4699) ( 441057247,4708) ( 444043429,4717) ( 446352791,4726) ( 447914693,4735) 
( 450132191,4744) ( 453067031,4753) ( 454482961,4762) ( 455813483,4771) ( 458595523,4780) ( 460533643,4789) ( 462972001,4798) ( 464661859,4807) ( 466116649,4816) ( 468437491,4825) ( 470335573,4834) ( 472971019,4843) ( 474541381,4852) ( 475768109,4861) ( 477815573,4870) ( 479650799,4879) ( 482011571,4888) 
( 484623661,4897) ( 487132651,4906) ( 490360177,4915) ( 492858269,4924) ( 495374779,4933) ( 498052369,4942) ( 499634683,4951) ( 502768271,4960) ( 505495007,4969) ( 507563083,4978) ( 509364029,4987) ( 511791191,4996) ( 513070783,5005) ( 514083799,5014) ( 515973119,5023) ( 517793323,5032) ( 519456107,5041) 
( 520353481,5050) ( 523492337,5059) ( 525317621,5068) ( 527059889,5077) ( 529359679,5086) ( 531521357,5095) ( 533419363,5104) ( 535365373,5113) ( 537355073,5122) ( 539409427,5131) ( 541081631,5140) ( 543460481,5149) ( 546469711,5158) ( 548307857,5167) ( 549854597,5176) ( 551141081,5185) ( 553000309,5194) 
( 555127709,5203) ( 556955081,5212) ( 558860087,5221) ( 560611609,5230) ( 562391861,5239) ( 564010267,5248) ( 565268339,5257) ( 567420341,5266) ( 570222893,5275) ( 572158771,5284) ( 575842529,5293) ( 578374019,5302) ( 579532529,5311) ( 581386807,5320) ( 584655977,5329) ( 587222443,5338) ( 589781053,5347) 
( 591096911,5356) ( 593164303,5365) ( 595493567,5374) ( 597756727,5383) ( 599140081,5392) ( 601248961,5401) ( 603665771,5410) ( 606325033,5419) ( 610204159,5428) ( 612733097,5437) ( 614527211,5446) ( 616026809,5455) ( 618602041,5464) ( 621979591,5473) ( 625395313,5482) ( 627196819,5491) ( 629397479,5500) 
( 630695647,5509) ( 632961367,5518) ( 636555977,5527) ( 638406851,5536) ( 640674709,5545) ( 641645597,5554) ( 643530149,5563) ( 646196269,5572) ( 648992339,5581) ( 650172409,5590) ( 652366933,5599) ( 655619449,5608) ( 658609571,5617) ( 661628983,5626) ( 663132689,5635) ( 665186803,5644) ( 668025401,5653) 
( 669881489,5662) ( 671396501,5671) ( 673824343,5680) ( 678718427,5689) ( 680758223,5698) ( 683014811,5707) ( 685462153,5716) ( 687180829,5725) ( 689305391,5734) ( 691816507,5743) ( 693706399,5752) ( 695221757,5761) ( 698915699,5770) ( 700295251,5779) ( 703620553,5788) ( 705899477,5797) ( 707358433,5806) 
( 708672799,5815) ( 709421459,5824) ( 711027019,5833) ( 713183377,5842) ( 715744703,5851) ( 718497907,5860) ( 720049849,5869) ( 722188367,5878) ( 724695403,5887) ( 726326369,5896) ( 728655563,5905) ( 730159699,5914) ( 734358563,5923) ( 737966353,5932) ( 740529917,5941) ( 742315633,5950) ( 743851243,5959) 
( 746926709,5968) ( 749380507,5977) ( 751283629,5986) ( 753771521,5995) ( 754876081,6004) ( 757297613,6013) ( 762032951,6022) ( 763071457,6031) ( 764368831,6040) ( 765556711,6049) ( 767614313,6058) ( 772377773,6067) ( 775583213,6076) ( 777277561,6085) ( 778350961,6094) ( 781385753,6103) ( 784130749,6112) 
( 785139167,6121) ( 787340539,6130) ( 790802437,6139) ( 792726811,6148) ( 793882457,6157) ( 796955933,6166) ( 799249819,6175) ( 801802961,6184) ( 804694069,6193) ( 808127773,6202) ( 809402621,6211) ( 811785053,6220) ( 814420681,6229) ( 818188601,6238) ( 822383171,6247) ( 824646397,6256) ( 827384641,6265) 
( 829991111,6274) ( 832498871,6283) ( 835673863,6292) ( 839008171,6301) ( 841084021,6310) ( 844369091,6319) ( 845890733,6328) ( 848861903,6337) ( 850656349,6346) ( 853981643,6355) ( 855630511,6364) ( 857130341,6373) ( 860443121,6382) ( 862613417,6391) ( 865982587,6400) ( 870444793,6409) ( 871889779,6418) 
( 874189493,6427) ( 875892053,6436) ( 878514269,6445) ( 880474501,6454) ( 882587093,6463) ( 886604993,6472) ( 889137649,6481) ( 892435601,6490) ( 893808967,6499) ( 896407583,6508) ( 899729813,6517) ( 902861929,6526) ( 905387993,6535) ( 907794287,6544) ( 910585699,6553) ( 913083707,6562) ( 917461109,6571) 
( 921388399,6580) ( 925032463,6589) ( 928081139,6598) ( 932242433,6607) ( 936849373,6616) ( 938912099,6625) ( 940335503,6634) ( 943239527,6643) ( 945196529,6652) ( 947990179,6661) ( 949532737,6670) ( 953016577,6679) ( 955435727,6688) ( 959079977,6697) ( 962075381,6706) ( 964541069,6715) ( 966330223,6724) 
( 968801209,6733) ( 971304133,6742) ( 974622581,6751) ( 976976779,6760) ( 979139891,6769) ( 982061053,6778) ( 985066801,6787) ( 986324483,6796) ( 987583813,6805) ( 990096301,6814) ( 993499093,6823) ( 995519017,6832) ( 998563057,6841) (1001945641,6850) (1004478637,6859) (1006010441,6868) (1009070441,6877) 
(1012055897,6886) (1015503277,6895) (1020191903,6904) (1023926747,6913) (1028142769,6922) (1032911051,6931) (1035864449,6940) (1038731801,6949) (1043088911,6958) (1046756663,6967) (1049204771,6976) (1051851467,6985) (1056440309,6994) (1059501661,7003) (1064376913,7012) (1069839391,7021) (1073360719,7030) 
        };
   \addplot[color=red]
     coordinates {
(         3,   7) (       149,  11) (       829,  19) (      1877,  26) (      4157,  35) (      5741,  39) (     12203,  52) (     19447,  63) (     23131,  68) (     34763,  80) (     41231,  85) (     52501,  94) (     64301, 102) (     74597, 109) (     84467, 114) (    104393, 125) (    122011, 133) 
(    148501, 144) (    165719, 151) (    195743, 162) (    216061, 169) (    239753, 176) (    266587, 185) (    285289, 190) (    316817, 198) (    343169, 205) (    368287, 211) (    398407, 219) (    435779, 227) (    486589, 238) (    502441, 241) (    562297, 253) (    618329, 263) (    675743, 273) 
(    740011, 284) (    801841, 294) (    861659, 303) (    920509, 312) (    999437, 323) (   1065133, 332) (   1151243, 343) (   1225109, 353) (   1288709, 360) (   1362511, 369) (   1487917, 383) (   1570007, 392) (   1640761, 400) (   1738493, 410) (   1803743, 416) (   1929467, 429) (   2053439, 440) 
(   2164807, 450) (   2227429, 456) (   2379929, 469) (   2499137, 479) (   2697151, 495) (   2816213, 505) (   3014051, 520) (   3119491, 527) (   3241559, 536) (   3383801, 546) (   3451681, 551) (   3623071, 563) (   3809171, 575) (   3958621, 585) (   4059619, 591) (   4261009, 604) (   4414313, 613) 
(   4623659, 626) (   4726069, 632) (   4851883, 639) (   5029411, 649) (   5198377, 658) (   5495299, 674) (   5635717, 682) (   5790857, 690) (   5908891, 696) (   6134533, 708) (   6258107, 714) (   6450049, 723) (   6780551, 739) (   6975091, 748) (   7168873, 757) (   7333049, 765) (   7453487, 770) 
(   7537003, 774) (   7886111, 790) (   8033591, 796) (   8186291, 803) (   8422571, 813) (   8592533, 820) (   8834627, 830) (   8995757, 836) (   9283171, 848) (   9444217, 854) (   9643813, 862) (   9907081, 872) (  10104139, 880) (  10276979, 887) (  10555553, 897) (  10683773, 902) (  10808939, 906) 
(  11116151, 918) (  11406401, 928) (  11795677, 942) (  12058477, 951) (  12205799, 956) (  12518399, 967) (  12760399, 975) (  12997441, 983) (  13349807, 994) (  13652209,1004) (  13951279,1014) (  14203139,1022) (  14383687,1027) (  14558543,1033) (  15039793,1048) (  15181207,1052) (  15341393,1057) 
(  15573653,1064) (  15923461,1074) (  16223939,1083) (  16546513,1093) (  16913009,1103) (  17177011,1111) (  17417689,1118) (  17612291,1123) (  17843083,1130) (  18268447,1141) (  18636119,1151) (  19059503,1163) (  19171013,1166) (  19635631,1178) (  19971563,1187) (  20217787,1193) (  20572781,1203) 
(  21052159,1215) (  21387851,1223) (  21721031,1232) (  22093843,1241) (  22513901,1251) (  22960799,1262) (  23272687,1270) (  23573569,1277) (  23972153,1287) (  24346897,1295) (  24950537,1309) (  25534573,1323) (  25827469,1330) (  26179691,1338) (  26483503,1344) (  26834771,1352) (  27130289,1359) 
(  27594613,1369) (  27910723,1376) (  28368511,1386) (  28645367,1392) (  29070397,1401) (  29430637,1409) (  29768113,1416) (  30474887,1430) (  30855211,1438) (  31213961,1446) (  31667551,1455) (  31913839,1460) (  32589911,1473) (  32926843,1480) (  33489791,1491) (  33782473,1497) (  34324187,1508) 
(  34836107,1518) (  35308543,1527) (  35712559,1534) (  36221329,1544) (  36742691,1554) (  37156907,1561) (  37467799,1567) (  37748363,1572) (  38166061,1580) (  38815363,1592) (  39284389,1600) (  39732419,1609) (  40107751,1615) (  40391101,1620) (  40689421,1626) (  41761003,1644) (  42246319,1653) 
(  42454897,1656) (  43223497,1670) (  43700561,1678) (  44175281,1686) (  44630479,1694) (  45249877,1704) (  45753481,1712) (  46176643,1719) (  46801031,1730) (  47284417,1737) (  48275021,1754) (  49072451,1766) (  49790353,1778) (  50363609,1787) (  50795461,1794) (  51300853,1801) (  51894131,1811) 
(  52300189,1817) (  52644743,1822) (  53047559,1828) (  53565191,1836) (  54087833,1844) (  54669661,1853) (  55269787,1862) (  55915099,1872) (  56563099,1881) (  57155573,1890) (  57813359,1900) (  58397231,1908) (  59199037,1920) (  59741119,1927) (  60410267,1937) (  61107317,1947) (  61499377,1952) 
(  62462689,1966) (  62873933,1972) (  64361903,1992) (  64930897,2000) (  65542193,2008) (  66103727,2016) (  66582689,2023) (  67838591,2039) (  68467571,2048) (  69105373,2056) (  70060577,2069) (  70746757,2078) (  71210473,2084) (  71919293,2093) (  72751639,2104) (  73269557,2111) (  73960343,2119) 
(  74707081,2129) (  75489179,2139) (  76505719,2151) (  77490529,2164) (  78513737,2176) (  79083373,2183) (  79590613,2190) (  80430809,2200) (  81450713,2212) (  82507913,2225) (  83110697,2232) (  83764169,2240) (  85068457,2255) (  85906757,2265) (  86566367,2273) (  87614047,2285) (  88066607,2291) 
(  89088893,2302) (  89639437,2309) (  90817999,2322) (  91280303,2327) (  91755607,2333) (  92686903,2343) (  93343259,2351) (  94028569,2358) (  94766059,2367) (  95810437,2378) (  96709441,2388) (  97470991,2396) (  98511337,2408) (  99624331,2420) ( 100651939,2431) ( 101451001,2440) ( 102505153,2451) 
( 103406783,2460) ( 104159009,2468) ( 105596479,2484) ( 106388587,2492) ( 107316553,2501) ( 108718507,2516) ( 110546207,2535) ( 111518633,2545) ( 111995921,2550) ( 112835659,2558) ( 113597579,2566) ( 114538097,2575) ( 115223473,2582) ( 116077537,2591) ( 117301529,2603) ( 117760259,2607) ( 119081393,2620) 
( 119822009,2628) ( 120589061,2635) ( 121314283,2642) ( 122324621,2652) ( 123180367,2660) ( 124052207,2669) ( 124948723,2677) ( 125845129,2686) ( 126698717,2694) ( 127512443,2702) ( 128441791,2710) ( 129486043,2720) ( 130318127,2728) ( 131346779,2738) ( 132178507,2745) ( 133307323,2756) ( 135000709,2771) 
( 135375803,2775) ( 136238807,2783) ( 137089597,2790) ( 137709331,2796) ( 138747467,2805) ( 140213473,2819) ( 141230987,2828) ( 142220383,2837) ( 143286037,2846) ( 143981881,2852) ( 144613837,2858) ( 145406663,2865) ( 146511539,2874) ( 147344941,2882) ( 148304071,2890) ( 149502943,2901) ( 150277187,2907) 
( 151079881,2914) ( 152614537,2927) ( 153523157,2935) ( 154753427,2946) ( 155887741,2955) ( 156485851,2960) ( 157052743,2965) ( 158161307,2974) ( 159094973,2982) ( 160163023,2991) ( 160818773,2997) ( 162187423,3008) ( 162779527,3013) ( 163688123,3020) ( 164719697,3029) ( 165913703,3039) ( 167013443,3048) 
( 167632363,3053) ( 169211687,3066) ( 170571179,3077) ( 172062001,3089) ( 173043929,3096) ( 174460969,3108) ( 175623191,3117) ( 176719303,3126) ( 177980287,3136) ( 179247799,3146) ( 180217963,3153) ( 181644647,3164) ( 182591657,3172) ( 184230413,3184) ( 185737073,3196) ( 186701447,3204) ( 187689347,3211) 
( 189350009,3224) ( 190707893,3234) ( 192066059,3244) ( 193469009,3255) ( 194015399,3259) ( 195229283,3268) ( 196497209,3278) ( 197340497,3284) ( 198615211,3293) ( 199458421,3300) ( 200498791,3307) ( 201280943,3313) ( 203534711,3330) ( 205077709,3341) ( 206530561,3352) ( 207823333,3361) ( 208854823,3368) 
( 210272399,3379) ( 211135789,3385) ( 212135051,3392) ( 212703653,3396) ( 213467627,3402) ( 214528637,3409) ( 215481187,3416) ( 216582419,3424) ( 217816063,3432) ( 219048439,3441) ( 220730491,3453) ( 222464701,3465) ( 223150981,3470) ( 224150299,3477) ( 225623927,3487) ( 227066291,3497) ( 228428729,3506) 
( 229919351,3517) ( 230839127,3523) ( 232680667,3535) ( 233371891,3540) ( 234809447,3550) ( 235576717,3555) ( 236075713,3558) ( 237482459,3568) ( 239137471,3579) ( 240936823,3591) ( 242003389,3598) ( 242948749,3605) ( 243317873,3607) ( 244379977,3614) ( 246559813,3628) ( 247895579,3637) ( 248580047,3642) 
( 249859609,3650) ( 252220091,3666) ( 253094581,3671) ( 254073763,3678) ( 254969567,3683) ( 257021209,3697) ( 257706913,3701) ( 259006691,3709) ( 259692337,3714) ( 261462917,3725) ( 262447631,3731) ( 263777039,3740) ( 265725179,3752) ( 267252959,3762) ( 269235271,3774) ( 270702161,3784) ( 271848007,3791) 
( 272809909,3797) ( 274054811,3805) ( 276292199,3818) ( 278061131,3829) ( 279545111,3839) ( 280760479,3846) ( 282548621,3857) ( 284452859,3869) ( 285590443,3876) ( 286797451,3883) ( 287631529,3888) ( 289232761,3898) ( 291180733,3909) ( 294235589,3928) ( 295445963,3935) ( 296667407,3942) ( 299440909,3959) 
( 301477447,3971) ( 302352047,3976) ( 304761857,3990) ( 306752363,4002) ( 308976389,4015) ( 309747853,4019) ( 311412053,4029) ( 313641919,4042) ( 315414613,4052) ( 316502567,4059) ( 318760649,4072) ( 320058839,4079) ( 321216349,4086) ( 323076401,4096) ( 324473153,4104) ( 325606003,4111) ( 327545243,4122) 
( 328783561,4128) ( 330096953,4136) ( 331338727,4143) ( 334028693,4158) ( 335683373,4167) ( 337622081,4178) ( 339361843,4188) ( 340641731,4195) ( 341680727,4200) ( 342489817,4205) ( 344111381,4214) ( 345455447,4221) ( 348627809,4239) ( 351007763,4252) ( 351596257,4255) ( 353630531,4266) ( 354308099,4269) 
( 355683649,4277) ( 357708853,4288) ( 359524541,4298) ( 360775879,4304) ( 361787851,4310) ( 363130219,4317) ( 365412083,4329) ( 367251449,4339) ( 370801481,4358) ( 372348029,4366) ( 374261011,4376) ( 376390163,4387) ( 377765669,4394) ( 379945609,4406) ( 380794619,4410) ( 382950433,4421) ( 384220769,4428) 
( 386930683,4442) ( 388338193,4449) ( 391237661,4464) ( 391846547,4467) ( 392892641,4472) ( 395608153,4486) ( 397487813,4496) ( 399833789,4508) ( 400815509,4513) ( 402206291,4520) ( 403991891,4529) ( 405355061,4536) ( 406986967,4544) ( 409712117,4558) ( 411342367,4566) ( 412640869,4572) ( 414224983,4580) 
( 416323751,4590) ( 418192903,4600) ( 419459611,4606) ( 420936671,4613) ( 423096211,4624) ( 424376549,4630) ( 425936939,4638) ( 428068673,4648) ( 429573253,4656) ( 432292901,4669) ( 435236773,4683) ( 437151479,4692) ( 439093019,4702) ( 441057247,4711) ( 444043429,4725) ( 446352791,4736) ( 447914693,4744) 
( 450132191,4754) ( 453067031,4768) ( 454482961,4775) ( 455813483,4781) ( 458595523,4795) ( 460533643,4804) ( 462972001,4815) ( 464661859,4823) ( 466116649,4830) ( 468437491,4841) ( 470335573,4849) ( 472971019,4862) ( 474541381,4869) ( 475768109,4874) ( 477815573,4884) ( 479650799,4892) ( 482011571,4903) 
( 484623661,4915) ( 487132651,4927) ( 490360177,4941) ( 492858269,4952) ( 495374779,4964) ( 498052369,4976) ( 499634683,4983) ( 502768271,4997) ( 505495007,5009) ( 507563083,5018) ( 509364029,5026) ( 511791191,5037) ( 513070783,5043) ( 514083799,5047) ( 515973119,5056) ( 517793323,5064) ( 519456107,5071) 
( 520353481,5075) ( 523492337,5089) ( 525317621,5097) ( 527059889,5104) ( 529359679,5114) ( 531521357,5124) ( 533419363,5132) ( 535365373,5140) ( 537355073,5149) ( 539409427,5158) ( 541081631,5165) ( 543460481,5175) ( 546469711,5188) ( 548307857,5196) ( 549854597,5203) ( 551141081,5208) ( 553000309,5216) 
( 555127709,5225) ( 556955081,5233) ( 558860087,5241) ( 560611609,5248) ( 562391861,5256) ( 564010267,5262) ( 565268339,5268) ( 567420341,5277) ( 570222893,5289) ( 572158771,5297) ( 575842529,5312) ( 578374019,5322) ( 579532529,5327) ( 581386807,5335) ( 584655977,5348) ( 587222443,5359) ( 589781053,5369) 
( 591096911,5375) ( 593164303,5383) ( 595493567,5393) ( 597756727,5402) ( 599140081,5408) ( 601248961,5416) ( 603665771,5426) ( 606325033,5437) ( 610204159,5452) ( 612733097,5463) ( 614527211,5470) ( 616026809,5476) ( 618602041,5486) ( 621979591,5500) ( 625395313,5513) ( 627196819,5520) ( 629397479,5529) 
( 630695647,5534) ( 632961367,5543) ( 636555977,5557) ( 638406851,5565) ( 640674709,5573) ( 641645597,5577) ( 643530149,5585) ( 646196269,5595) ( 648992339,5606) ( 650172409,5611) ( 652366933,5619) ( 655619449,5632) ( 658609571,5643) ( 661628983,5655) ( 663132689,5661) ( 665186803,5669) ( 668025401,5680) 
( 669881489,5687) ( 671396501,5692) ( 673824343,5702) ( 678718427,5720) ( 680758223,5728) ( 683014811,5737) ( 685462153,5746) ( 687180829,5752) ( 689305391,5760) ( 691816507,5770) ( 693706399,5777) ( 695221757,5783) ( 698915699,5796) ( 700295251,5802) ( 703620553,5814) ( 705899477,5822) ( 707358433,5828) 
( 708672799,5833) ( 709421459,5836) ( 711027019,5842) ( 713183377,5849) ( 715744703,5859) ( 718497907,5869) ( 720049849,5875) ( 722188367,5883) ( 724695403,5892) ( 726326369,5898) ( 728655563,5906) ( 730159699,5912) ( 734358563,5927) ( 737966353,5940) ( 740529917,5950) ( 742315633,5956) ( 743851243,5962) 
( 746926709,5973) ( 749380507,5982) ( 751283629,5988) ( 753771521,5997) ( 754876081,6001) ( 757297613,6010) ( 762032951,6027) ( 763071457,6031) ( 764368831,6035) ( 765556711,6039) ( 767614313,6047) ( 772377773,6064) ( 775583213,6075) ( 777277561,6081) ( 778350961,6085) ( 781385753,6095) ( 784130749,6105) 
( 785139167,6109) ( 787340539,6116) ( 790802437,6129) ( 792726811,6135) ( 793882457,6139) ( 796955933,6150) ( 799249819,6158) ( 801802961,6167) ( 804694069,6177) ( 808127773,6189) ( 809402621,6193) ( 811785053,6201) ( 814420681,6210) ( 818188601,6223) ( 822383171,6238) ( 824646397,6246) ( 827384641,6255) 
( 829991111,6264) ( 832498871,6272) ( 835673863,6283) ( 839008171,6294) ( 841084021,6301) ( 844369091,6313) ( 845890733,6318) ( 848861903,6328) ( 850656349,6334) ( 853981643,6345) ( 855630511,6350) ( 857130341,6355) ( 860443121,6366) ( 862613417,6374) ( 865982587,6385) ( 870444793,6400) ( 871889779,6405) 
( 874189493,6412) ( 875892053,6418) ( 878514269,6426) ( 880474501,6433) ( 882587093,6440) ( 886604993,6453) ( 889137649,6461) ( 892435601,6472) ( 893808967,6477) ( 896407583,6485) ( 899729813,6496) ( 902861929,6506) ( 905387993,6514) ( 907794287,6522) ( 910585699,6531) ( 913083707,6539) ( 917461109,6554) 
( 921388399,6566) ( 925032463,6578) ( 928081139,6588) ( 932242433,6601) ( 936849373,6616) ( 938912099,6622) ( 940335503,6627) ( 943239527,6636) ( 945196529,6642) ( 947990179,6651) ( 949532737,6656) ( 953016577,6667) ( 955435727,6675) ( 959079977,6686) ( 962075381,6696) ( 964541069,6703) ( 966330223,6709) 
( 968801209,6717) ( 971304133,6725) ( 974622581,6735) ( 976976779,6742) ( 979139891,6749) ( 982061053,6758) ( 985066801,6767) ( 986324483,6771) ( 987583813,6775) ( 990096301,6783) ( 993499093,6794) ( 995519017,6800) ( 998563057,6809) (1001945641,6820) (1004478637,6827) (1006010441,6832) (1009070441,6841) 
(1012055897,6851) (1015503277,6861) (1020191903,6875) (1023926747,6887) (1028142769,6900) (1032911051,6914) (1035864449,6923) (1038731801,6932) (1043088911,6945) (1046756663,6956) (1049204771,6963) (1051851467,6971) (1056440309,6985) (1059501661,6994) (1064376913,7008) (1069839391,7025) (1073360719,7035) 
        };
  \end{axis}
\end{tikzpicture}
\caption{The blue curve plots the function $\pi_\ssplit(A/\rat,z)$ 
         for the Jacobian $A$ of the curve $y^2 = x^5 + x + 6$ over $\rat$.
         The red curve is $c \sqrt{z}/\log z$, with $c\approx 4.4651$.}
\label{figbig}
\end{figure}
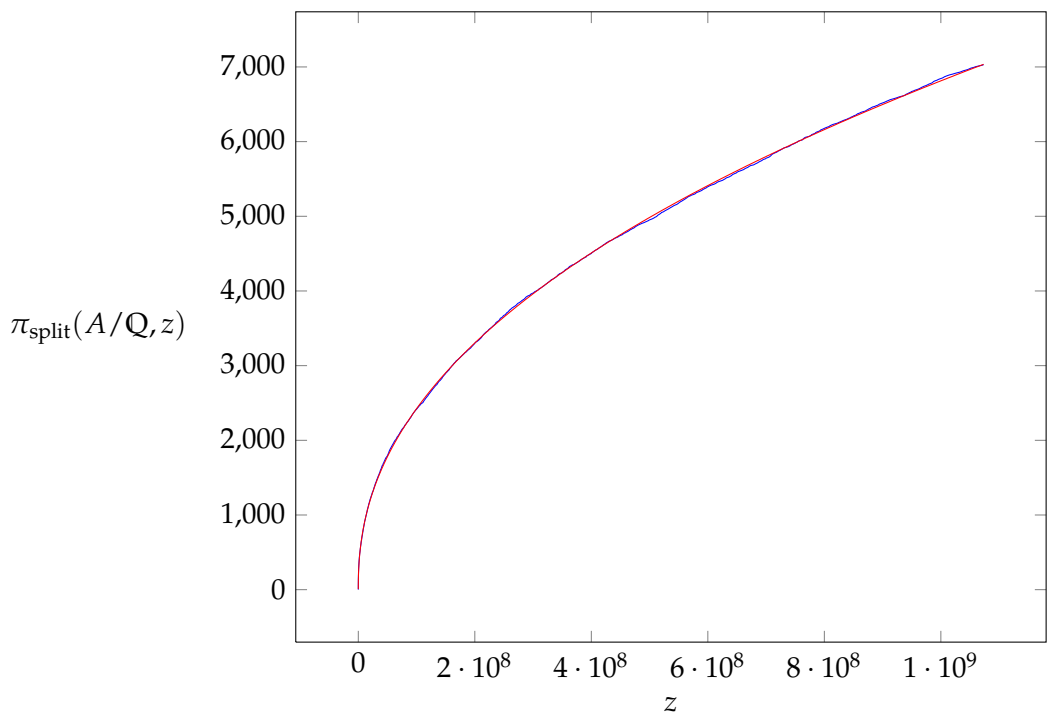

\subsection{Further directions}
\label{ssec-further}

We noted in Section~\ref{secintro} that our definition of 
$\cala_{2,\ssplit}(\fq)$ was perhaps not as natural as it could be --- one 
could also ask about principally-polarized surfaces that split over~$\fqbar$,
not just over $\fq$ itself. We suspect that a result like Theorem~\ref{thmain}
holds for this more general type of splitting. To prove such a theorem, one
would need to estimate the number of principally-polarized surfaces in several 
types of isogeny classes: the simple ordinary isogeny classes that are
geometrically split (which are enumerated in 
\cite[Thm.~6, p.~145]{HoweZhu2002}); and the supersingular isogeny classes 
(which are all geometrically split). There are a number of ways one could try 
to estimate the number of principally-polarized surfaces in these isogeny 
classes; for instance, the techniques of~\cite{Howe2004} might be of use.
We will not speculate further on this here.

Let $\cala_{2,\gsplit}(\fq)$ denote the subset of $\cala_{2}(\fq)$ consisting 
of those principally-polarized varieties that are not geometrically simple, and
for each $q$ let $d_q$ denote the ratio
\[
d_q = \frac{\sqrt{q} \cdot \#'\cala_{2,\gsplit}(\fq)}{\#'\cala_2(\fq)}
    = \frac{\sqrt{q} \cdot \#'\cala_{2,\gsplit}(\fq)}{q^3 + q^2}.
\]    
While collecting the data presented in Section~\ref{ssec-data-probsplit} we 
also collected data on $d_q$ by random sampling of curves. 
Figure~\ref{figprobgeomsplit}  presents the results for 
$q\in\st{131, 257, 521, 1031, 2053, 4099, 16411, 65537}.$  The figure suggests
that perhaps $d_q$ is bounded away from $0$ and~$\infty$.

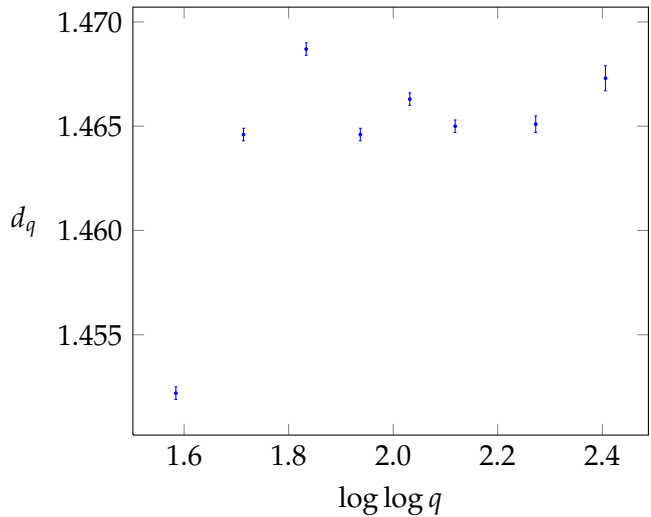
\begin{figure}
\centering
\begin{tikzpicture}
   \begin{axis}[xlabel=$\log \log q$,
                ylabel=$d_q$, 
                xtick = {1.6,1.8,2.0,2.2,2.4},
                xticklabels = {$1.6$,$1.8$,$2.0$,$2.2$,$2.4$},
                ytick = {1.450,1.455,1.460,1.465,1.470},
                yticklabels = {$1.450$,$1.455$,$1.460$,$1.465$,$1.470$},
                ylabel style={rotate=270},
                mark size={0.1ex}]
   \addplot[color=blue, only marks,
            error bars/.cd,
            y dir=both, y explicit,
             ]
     coordinates {
( 1.5842, 1.4522)  +- (0,0.0003)
( 1.7136, 1.4646)  +- (0,0.0003)
( 1.8335, 1.4687)  +- (0,0.0003)
( 1.9371, 1.4646)  +- (0,0.0003)
( 2.0317, 1.4663)  +- (0,0.0003)
( 2.1185, 1.4650)  +- (0,0.0003)
( 2.2727, 1.4651)  +- (0,0.0004)
( 2.4061, 1.4673)  +- (0,0.0006)
        };
  \end{axis}
\end{tikzpicture}
\caption{The experimentally computed values of $d_q$ for the primes $q$ 
         in $\st{131, 257, 521, 1031, 2053, 4099, 16411, 65537}$. 
         Error bars indicate one standard deviation.}
\label{figprobgeomsplit}
\end{figure}

\bibliographystyle{hplaindoi} 
\bibliography{splitsurf}

\end{document}